\newlength{\depthofsumsign}
\let\I\@undefined
\newcommand{\textcyr}[1]{
{\fontencoding{OT2}\fontfamily{wncyr}\selectfont
\hyphenchar\font=-1\relax#1}}
\DeclareMathOperator{\IKM}{\mathbf{IKM}}
\DeclareMathOperator{\D}{d}
\DeclareMathOperator{\I}{Im}
\DeclareMathOperator{\R}{Re}
\def\eor{\hfill$ \square$}
\theoremstyle{plain}
\newtheorem{theorem}{Theorem}[subsection]
\newtheorem{proposition}[theorem]{Proposition}
\newtheorem{lemma}[theorem]{Lemma}
\newenvironment{remark}[1][Remark]{\begin{trivlist}
\item[\hskip \labelsep {\bfseries #1}]}{\end{trivlist}}
\newbox\shell
\newcommand{\dia}[2]{\setbox\shell=\hbox{\begin{picture}(180,120)(-90,-60)#1
\put(-90,-60){\makebox(180,120)[b]{\large #2}}\end{picture}}\dimen0=\ht
\shell\multiply\dimen0by7\divide\dimen0by16\raise-\dimen0\box\shell\hfill}
\newcommand{\vtx}{\circle*{10}}
\theoremstyle{definition}
\numberwithin{equation}{subsection}
\begin{document}

\pagenumbering{roman}
\selectlanguage{english}
\title[Sunrise at 4 loops]{On Laporta's 4-loop sunrise formulae}
\author{Yajun Zhou}
\address{Program in Applied and Computational Mathematics (PACM), Princeton University, Princeton, NJ 08544} \email{yajunz@math.princeton.edu}\curraddr{\textsc{Academy of Advanced Interdisciplinary Studies (AAIS), Peking University, Beijing 100871, P. R. China}}\email{yajun.zhou.1982@pku.edu.cn}
\thanks{\textit{Keywords}:   Watson integrals, Bessel functions, Feynman integrals, sunrise diagrams \\\indent\textit{MSC 2010}: 33C05, 33C10, 33C20 (Primary) 81T18, 81T40, 81Q30  (Secondary)}
\date{\today}

\maketitle

\begin{abstract}
   We prove Laporta's conjecture\begin{align*}&\int_0^\infty\frac{\mathrm d\, x_1}{x_1}\int_0^\infty\frac{\mathrm d\, x_2}{x_2}\int_0^\infty\frac{\mathrm d\, x_3}{x_3}\int_0^\infty\frac{\mathrm d\, x_4}{x_4}\frac{1}{\left(1+\sum^4_{k=1}x_k\right)\left(1+\sum^4_{k=1}\frac{1}{x_{k}} \right)-1}\\={}&\frac43
\int_{0}^\pi\mathrm d\, \phi_1
\int_{0}^\pi\mathrm d\, \phi_2\int_{0}^\pi\mathrm d\, \phi_3
\int_{0}^\pi\mathrm d\, \phi_4\frac{1}{4-\sum_{k=1}^4\cos \phi_k},
\end{align*} which relates the 4-loop sunrise diagram in 2-dimensional quantum field theory to Watson's integral for 4-dimensional hypercubic lattice. We also establish several related integral identities proposed by Laporta, including a reduction of the 4-loop sunrise diagram to special values of Euler's gamma function and generalized hypergeometric series:\begin{align*}
\frac{4 \pi ^{5/2}}{\sqrt{3}}\left\{ \frac{\sqrt{3} }{2^6 }\left[\frac{\Gamma \left(\frac{1}{3}\right)}{\sqrt{\pi}}\right]^9\, _4F_3\left(\left. \begin{array}{c}\frac{1}{6},\frac{1}{3},\frac{1}{3},\frac{1}{2}\\[4pt]\frac{2}{3},\frac{5}{6},\frac{5}{6}\end{array} \right|1\right)-\frac{2^{4}}{3}\left[\frac{\sqrt{\pi}}{\Gamma \left(\frac{1}{3}\right)}\right]^9\, _4F_3\left(\left. \begin{array}{c}\frac{1}{2},\frac{2}{3},\frac{2}{3},\frac{5}{6}\\[4pt]\frac{7}{6},\frac{7}{6},\frac{4}{3}\end{array} \right|1\right) \right\}.
\end{align*}\end{abstract}
\pagenumbering{roman}
\tableofcontents

\clearpage

\pagenumbering{arabic}

\section{Introduction}
\subsection{Laporta's empirical formulae for 4-loop sunrise}

In 2-dimensional quantum field theory,
 the 4-loop sunrise diagram  refers to the following object:\begin{align}\begin{split}
\;\;\;\;\;
\dia{\put(-100,0){\line(1,0){200}}
\put(0,15){\circle{100}}
\put(0,-15){\circle{100}}
\put(50,0){\vtx}
\put(-50,0){\vtx}
}{}\;\;\;={}&2^{4}\int_0^\infty I_0(t)[K_0(t)]^5t\D t\\={}&\end{split}\int_0^\infty\frac{\D x_1}{x_1}\int_0^\infty\frac{\D x_2}{x_2}\int_0^\infty\frac{\D x_3}{x_3}\int_0^\infty\frac{\D x_4}{x_4}\frac{1}{\left(1+\sum^4_{k=1}x_k\right)\left(1+\sum^4_{k=1}\frac{1}{x_{k}} \right)-1}.
\end{align} Here, in the configuration space, the Feynman diagram is represented by a single integral over the variable $t$, which involves modified
 Bessel functions  \begin{align} I_0(t)=\frac{1}{\pi}\int_0^\pi e^{t\cos\theta}\D\theta \quad\text{and}\quad K_0(t)=\int_0^\infty e^{-t\cosh u}\D u;\label{eq:defn_I0K0}\end{align} in the Schwinger parameter space, the Feynman diagram is represented by a quadruple integral over a rational function in the variables $x_1$, $x_2$, $x_3$ and $x_4$. It is a well-established fact (see, for example, \cite[][\S\S9.1--9.2]{Broadhurst2013MZV} or \cite[][\S8]{Vanhove2014Survey}) that  the aforementioned single and quadruple integral representations are equivalent to each other.

Numerical experiments have led to still  more  integral representations for the 4-loop sunrise diagram. Some of these empirical formulae have remained long-standing conjectures.  For example, in 2008 and 2017, Laporta suggested that  \cite[][(72) and (81)]{Laporta2008} \begin{align}
\;\;\;\;\;
\dia{\put(-100,0){\line(1,0){200}}
\put(0,15){\circle{100}}
\put(0,-15){\circle{100}}
\put(50,0){\vtx}
\put(-50,0){\vtx}
}{}\;\;\;=\frac43
\int_{0}^\pi\D \phi_1
\int_{0}^\pi\D \phi_2\int_{0}^\pi\D \phi_3
\int_{0}^\pi\D \phi_4\frac{1}{4-\sum_{k=1}^4\cos \phi_k},\label{eq:LaportaWatson}
\end{align}and    (see \cite[][(69) and (72)]{Laporta2008} and \cite[][(28)]{Laporta:2017okg})\begin{align}\begin{split}
&\;\;\;\;\;
\dia{\put(-100,0){\line(1,0){200}}
\put(0,15){\circle{100}}
\put(0,-15){\circle{100}}
\put(50,0){\vtx}
\put(-50,0){\vtx}
}{}\;\;\;=\frac{4  \sqrt{3} \pi ^3}{27} \int_{0}^1\left[ _2F_1\left(\left.\begin{array}{c}
\frac{1}{3},\frac{2}{3} \\[4pt]
1 \\
\end{array}  \right|x\right)\right]^2\frac{\D x}{\sqrt{1-x}}\\={}&\frac{4 \pi ^{5/2}}{\sqrt{3}}\left\{ \frac{\left[\Gamma \left(\frac{7}{6}\right)\right]^2 \Gamma \left(\frac{1}{3}\right)}{\left[\Gamma \left(\frac{2}{3}\right)\right]^2 \Gamma \left(\frac{5}{6}\right)}\, _4F_3\left(\left. \begin{array}{c}\frac{1}{6},\frac{1}{3},\frac{1}{3},\frac{1}{2}\\[4pt]\frac{2}{3},\frac{5}{6},\frac{5}{6}\end{array} \right|1\right)+\frac{\left[\Gamma \left(\frac{5}{6}\right)\right]^2 \Gamma \left(-\frac{1}{3}\right)}{\left[\Gamma \left(\frac{1}{3}\right)\right]^2 \Gamma \left(\frac{1}{6}\right)}\, _4F_3\left(\left. \begin{array}{c}\frac{1}{2},\frac{2}{3},\frac{2}{3},\frac{5}{6}\\[4pt]\frac{7}{6},\frac{7}{6},\frac{4}{3}\end{array} \right|1\right) \right\}.\end{split}\label{eq:Laporta_pFq}
\end{align} Here, the quadruple integral is a 4-dimensional analog  \cite{GlasserGuttmann1994,Joyce2003,Zucker2011} of a famous problem solved by G. N. Watson \cite{Watson1939},  the (generalized) hypergeometric series is defined by
\begin{align}{_pF_q}\left(\left.\begin{array}{c}
a_{1},\dots,a_p \\[4pt]
b_{1},\dots,b_q \\
\end{array}\right| x\right):=1+\sum_{n=1}^\infty\frac{\prod_{j=1}^p(a_{j})_n}{\prod_{k=1}^q(b_{k})_n }\frac{x^n}{n!}\label{eq:defn_pFq},\end{align}
 with $ (a)_{n}=\prod_{m=0}^{n-1}(a+m)$ being the rising factorial, and  the gamma function is given by \cite[][p.~163, (3)]{SteinII} \begin{align}
\Gamma(s)=\sum^\infty_{n=0}\frac{(-1)^n}{n!(n+s)}+\int_1^\infty e^{-t}t^{s-1}\D t,
\end{align} for $s\in\mathbb C\smallsetminus\mathbb Z_{\leq0} $.

Using the Legendre--Gau{\ss} multiplication formula for Euler's gamma function, one can simplify the ratios of gamma functions in   \eqref{eq:Laporta_pFq}, so that only $ \Gamma\left(\frac13\right)$ is retained in the final presentation \cite[][\S54]{NielsenGamma}. The results are \begin{align}
\frac{\left[\Gamma \left(\frac{7}{6}\right)\right]^2 \Gamma \left(\frac{1}{3}\right)}{\left[\Gamma \left(\frac{2}{3}\right)\right]^2 \Gamma \left(\frac{5}{6}\right)}=\frac{\sqrt{3} }{2^6 }\left[\frac{\Gamma \left(\frac{1}{3}\right)}{\sqrt{\pi}}\right]^9,\quad \frac{\left[\Gamma \left(\frac{5}{6}\right)\right]^2 \Gamma \left(-\frac{1}{3}\right)}{\left[\Gamma \left(\frac{1}{3}\right)\right]^2 \Gamma \left(\frac{1}{6}\right)}=-\frac{2^{4}}{3}\left[\frac{\sqrt{\pi}}{\Gamma \left(\frac{1}{3}\right)}\right]^9,
\end{align}hence the formula stated in the abstract. Hereafter, we will always implement such a  policy of reducing  gamma factors.
\subsection{Strategies for proving Laporta's formulae and their analogs}
In \S\ref{sec:IKM15x} of this work, we verify both  \eqref{eq:LaportaWatson} and \eqref{eq:Laporta_pFq}, through manipulations of certain infinite series, along with applications of  previous results on Watson integrals \cite{GlasserMontaldi1993,Guttmann2009} and Feynman diagrams \cite{BBBG2008,HB1}.
A major tool in our proof is the Meijer $G$-function, which is defined by an integral of Mellin--Barnes type:\begin{align}
G^{m,n}_{p,q}\left(z\left|
\begin{array}{c}
 a_{1},\dots ,a_p \\[4pt]
 b_{1},\dots,b_q \\
\end{array}
\right.\right):=\frac{1}{2\pi i}\resizebox{1.3\width}{1.5\height}{$\displaystyle\int$}_{\hspace{-0.6em}C}\frac{\prod_{j=1}^n\Gamma(1-a_{j}-s)\prod_{k=1}^m\Gamma(b_{k}+s)}{\prod_{j=n+1}^p\Gamma(a_{j}+s)\prod_{k=m+1}^q\Gamma(1-b_{k}-s)}\frac{\D s}{z^s}.\label{eq:MeijerG_defn}
\end{align}Here,  the contour $C$ is chosen such that the right-hand side of the equation above represents the sum over the residues of \begin{align}
-\frac{\prod_{j=1}^n\Gamma(1-a_{j}-s)\prod_{k=1}^m\Gamma(b_{k}+s)}{\prod_{j=n+1}^p\Gamma(a_{j}+s)\prod_{k=m+1}^q\Gamma(1-b_{k}-s)}\frac{1}{z^{s}}
\end{align}at all the poles in  $ \prod_{j=1}^n\Gamma(1-a_{j}-s)$.  Empty products, by convention, are equal to  unity.

In \S\ref{sec:sunrise_analogs}, we study some analogs of Laporta's 4-loop sunrise formulae, namely, hypergeometric representations for several Bessel moments\begin{align}
\IKM(a,b;n):=\int_0^\infty [I_0(t)]^a[K_0(t)]^bt^n\D t
\end{align}satisfying $ a+b=6$ and $ a,b\in\mathbb Z_{>0}$.
Extending the techniques in  \S\ref{sec:IKM15x} with Bailey's hypergeometric identity \cite[][(3.4)]{Bailey1932} and Vanhove's differential equations \cite[][\S9]{Vanhove2014Survey},
we prove the following integral evaluations proposed by Laporta \cite[][(29)]{Laporta:2017okg} and Broadhurst (private communication on Nov.~10, 2017):\begin{align}\begin{split}&
\int_0^\infty I_0(t)[K_0(t)]^5t(1-8t^{2})\D t=\frac{7 \pi ^3}{108 \sqrt{3}}\int_{0}^1\left[ _2F_1\left(\left.\begin{array}{c}
-\frac{1}{3},\frac{1}{3} \\[4pt]
1 \\
\end{array}  \right|x\right)\right]^2\frac{\D x}{\sqrt{1-x}}\\={}&\frac{7 \pi ^{5/2}}{36 \sqrt{3}}\left\{\frac{\sqrt{3} }{2^7}\left[\frac{\Gamma \left(\frac{1}{3}\right)}{\sqrt{\pi }}\right]^9\, _4F_3\left(\left. \begin{array}{c}-\frac{1}{2},\frac{1}{6},\frac{1}{3},\frac{4}{3}\\[4pt]-\frac{1}{6},\frac{5}{6},\frac{5}{3}\end{array} \right|1\right)+\frac{5}{7}\frac{2^4}{3}  \left[\frac{\sqrt{\pi }}{\Gamma \left(\frac{1}{3}\right)}\right]^9\, _4F_3\left(\left. \begin{array}{c}-\frac{7}{6},-\frac{1}{2},-\frac{1}{3},\frac{2}{3}\\[4pt]-\frac{5}{6},\frac{1}{6},\frac{1}{3}\end{array} \right|1\right) \right\}\\={}& \frac{81 \sqrt{3} \pi ^3}{2200}\, _7F_6\left(\left. \begin{array}{c}
-\frac{1}{3},\frac{1}{3},\frac{2}{3},\frac{4}{3},\frac{3}{2},\frac{3}{2},\frac{7}{4} \\[4pt]
\frac{3}{4},1,\frac{7}{6},\frac{11}{6},\frac{13}{6},\frac{17}{6} \\
\end{array} \right|1\right),\end{split}\label{eq:IKM151_IKM153_diff}
\end{align}as well as the following identities discovered by Laporta \cite[][(27)]{Laporta:2017okg} and Broadhurst (see \cite[][\S2.2]{Broadhurst2017Paris}, \cite[][\S2.2]{Broadhurst2017CIRM}, \cite[][\S2.1]{Broadhurst2017Higgs}, \cite[][\S3.1]{Broadhurst2017DESY}, \cite[][\S3.1]{Broadhurst2017ESIa}):\begin{align}\begin{split}
\IKM(2,4;1)={}&\frac{\pi^2}{30}\int_0^1{_2F_1}\left(\left.\begin{array}{c}
\frac{1}{3},\frac{2}{3}\ \\[4pt]
1 \\
\end{array}  \right|x\right){_2F_1}\left(\left.\begin{array}{c}
\frac{1}{3},\frac{2}{3}\ \\[4pt]
1 \\
\end{array}  \right|1-x\right)\frac{\D x}{\sqrt{1-x}}\\={}&\frac{3 \pi ^{3/2}}{20}\left\{ \frac{\sqrt{3} }{2^6 }\left[\frac{\Gamma \left(\frac{1}{3}\right)}{\sqrt{\pi}}\right]^9\, _4F_3\left(\left. \begin{array}{c}\frac{1}{6},\frac{1}{3},\frac{1}{3},\frac{1}{2}\\[4pt]\frac{2}{3},\frac{5}{6},\frac{5}{6}\end{array} \right|1\right)+\frac{2^{4}}{3}\left[\frac{\sqrt{\pi}}{\Gamma \left(\frac{1}{3}\right)}\right]^9\, _4F_3\left(\left. \begin{array}{c}\frac{1}{2},\frac{2}{3},\frac{2}{3},\frac{5}{6}\\[4pt]\frac{7}{6},\frac{7}{6},\frac{4}{3}\end{array} \right|1\right) \right\}\\={}&\frac{\pi^{2}}{10}{_4F_3}\left(\left.\begin{array}{c}
\frac{1}{3},\frac{1}{2},\frac{1}{2},\frac{2}{3} \\[4pt]\frac{5}{6},1,\frac{7}{6} \\
\end{array}\right|1\right).\end{split}
\end{align}We also establish a similar result for $\IKM(2,4;1)-8\IKM(2,4;3) $:\begin{align}\begin{split}&
\int_0^\infty [I_0(t)]^{2}[K_0(t)]^4t(1-8t^2)\D t\\={}&\frac{7 \pi ^{3/2}}{60}\left\{\frac{\sqrt{3} }{2^7}\left[\frac{\Gamma \left(\frac{1}{3}\right)}{\sqrt{\pi }}\right]^9\, _4F_3\left(\left. \begin{array}{c}-\frac{1}{2},\frac{1}{6},\frac{1}{3},\frac{4}{3}\\[4pt]-\frac{1}{6},\frac{5}{6},\frac{5}{3}\end{array} \right|1\right)-\frac{5}{7}\frac{2^4}{3}  \left[\frac{\sqrt{\pi }}{\Gamma \left(\frac{1}{3}\right)}\right]^9\, _4F_3\left(\left. \begin{array}{c}-\frac{7}{6},-\frac{1}{2},-\frac{1}{3},\frac{2}{3}\\[4pt]-\frac{5}{6},\frac{1}{6},\frac{1}{3}\end{array} \right|1\right) \right\}\\={}&\frac{9\pi ^2}{550}  \, _4F_3\left(\left.\begin{array}{c}
\frac{2}{3},\frac{4}{3},\frac{3}{2},\frac{5}{2} \\[4pt]2,\frac{13}{6},\frac{17}{6} \\
\end{array}\right|1\right).\end{split}\label{eq:IKM243_hypergeo_repn}
\end{align}During the course of our proof, we also obtain other hypergeometric representations of Bessel moments. For example, we may equate \eqref{eq:IKM151_IKM153_diff} with \begin{align}
\frac{\pi^{3}}{2^{1/3}}\left[\frac{\sqrt{\pi }}{\Gamma \left(\frac{1}{3}\right)}\right]^6&{_5F_4}\left(\left. \begin{array}{c}
-\frac{1}{3},\frac{1}{2},\frac{3}{2},\frac{3}{2},\frac{5}{3} \\[4pt]
\frac{7}{6},\frac{7}{6},\frac{13}{6},3 \\
\end{array} \right|1\right),
\end{align}in view of \eqref{eq:5F4_redn_a}.

Here, we point out that the Bessel moment $ \IKM(1,5;3)$ contributes a term to Laporta's 4-loop perturbative expansion of electron's  $g-2$ in $ (4-\varepsilon)$-dimensional quantum electrodynamics \cite{Laporta2008,Laporta:2017okg}.
The Bessel moments $ \IKM(2,4;1)$ and $ \IKM(2,4;3)$ did not appear in  Laporta's final result, but were indispensable to the following non-linear sum rule for Feynman diagrams:\begin{align}
\det\begin{pmatrix}\IKM(1,5;1) & \IKM(1,5;3) \\
\IKM(2,4;1) & \IKM(2,4;3) \\
\end{pmatrix}=\frac{\pi^{4}}{576}.\label{eq:detN2}
\end{align}The determinant above had been  discovered by Broadhurst--Mellit (see \cite[][(5.7)]{BroadhurstMellit2016} and \cite[][(113)]{Broadhurst2016}) through numerical experiments, before a proof was found \cite[][\S3]{Zhou2017BMdet}.
Plugging the hypergeometric representations of Bessel moments into the Broadhurst--Mellit determinant formula \eqref{eq:detN2}, we obtain  \begin{align}\begin{split}1={}&
\frac{7}{40} \, _4F_3\left(\left. \begin{array}{c}\frac{1}{2},\frac{2}{3},\frac{2}{3},\frac{5}{6}\\[4pt]\frac{7}{6},\frac{7}{6},\frac{4}{3}\end{array} \right|1\right) {_4F_3}\left(\left. \begin{array}{c}-\frac{1}{2},\frac{1}{6},\frac{1}{3},\frac{4}{3}\\[4pt]-\frac{1}{6},\frac{5}{6},\frac{5}{3}\end{array} \right|1\right)\\{}&+\frac{1}{4} \, _4F_3\left(\left. \begin{array}{c}\frac{1}{6},\frac{1}{3},\frac{1}{3},\frac{1}{2}\\[4pt]\frac{2}{3},\frac{5}{6},\frac{5}{6}\end{array} \right|1\right){_4F_3}\left(\left. \begin{array}{c}-\frac{7}{6},-\frac{1}{2},-\frac{1}{3},\frac{2}{3}\\[4pt]-\frac{5}{6},\frac{1}{6},\frac{1}{3}\end{array} \right|1\right). \end{split}
\end{align}
\section{Laporta's formulae for 4-loop sunrise $ \IKM(1,5;1)$\label{sec:IKM15x}}
\subsection{Watson's hypercubic integral and 4-loop sunrise }

We now consider the  4-dimensional Watson integral for the simple cubic lattice:\begin{align}W_{4}^{S}(x):=\frac{1}{4\pi^4}
\int_{0}^\pi\D \phi_1
\int_{0}^\pi\D \phi_2\int_{0}^\pi\D \phi_3
\int_{0}^\pi\D \phi_4\frac{1}{1-\frac{x}{4}\sum_{k=1}^4\cos \phi_k}.
\end{align}
Laporta's conjecture in \eqref{eq:LaportaWatson} essentially says\begin{align}
W_{4}^{S}(1)=\frac{12}{\pi^4}\int_0^\infty I_0(t)[K_0(t)]^5t\D t\equiv\frac{12}{\pi^4}\IKM(1,5;1).\label{eq:LaportaW4(1)}
\end{align}  \begin{proposition}[Watson integrals and Bessel moments]For all $ x\in[0,1]$, we have \begin{align}
W_{4}^{S}(x)=\frac{4}{\pi^2}\int_0^\infty [I_{0}(xt)]^{2} I_0(t)[K_0(t)]^3t\D t,\label{eq:W4x_BM}
\end{align}and this incorporates \eqref{eq:LaportaW4(1)}  as a special case.\end{proposition}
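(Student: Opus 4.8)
The plan is to collapse the four-fold angular integral to a single Bessel integral, and then to recognise the latter as the Bessel moment on the right of \eqref{eq:W4x_BM}. First I would Schwinger-parametrise the denominator: for $x\in[0,1)$ the integrand is positive, so Tonelli's theorem licenses writing
\[
W_4^S(x)=\frac{1}{4\pi^4}\int_{[0,\pi]^4}\D^4\phi\int_0^\infty e^{-s\left(1-\frac{x}{4}\sum_{k}\cos\phi_k\right)}\D s,
\]
interchanging orders and applying $\frac1\pi\int_0^\pi e^{a\cos\phi}\,\D\phi=I_0(a)$ to each angle. After the rescaling $s=4t$ this gives the clean form $W_4^S(x)=\int_0^\infty e^{-4t}[I_0(xt)]^4\,\D t$, with the endpoint $x=1$ recovered by monotone convergence as $x\uparrow1$ (there the integral converges like $\int^\infty\D t/t^2$). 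As a normalisation check, $x\to0$ gives $W_4^S(0)=\tfrac14=\tfrac{4}{\pi^2}\IKM(1,3;1)$, consistent with the classical value $\IKM(1,3;1)=\pi^2/16$.

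The crux is then the purely Bessel-analytic identity
\[
\int_0^\infty e^{-4t}[I_0(xt)]^4\,\D t=\frac{4}{\pi^2}\int_0^\infty [I_0(xt)]^2 I_0(t)[K_0(t)]^3\,t\,\D t .
\]
To attack it I would expose both sides as Feynman-type parametric integrals. On the right I would substitute $K_0(t)=\int_0^\infty e^{-t\cosh u}\,\D u$ for each of the three $K_0$ factors together with the angular representations of $I_0(t)$ and $[I_0(xt)]^2$; doing the elementary $t$-integral $\int_0^\infty t\,e^{-\lambda t}\,\D t=\lambda^{-2}$ turns the right-hand side into the six-fold integral $\frac{4}{\pi^5}\int(\cosh u_1+\cosh u_2+\cosh u_3-\cos\theta-x\cos\alpha-x\cos\beta)^{-2}$ over $\theta,\alpha,\beta\in[0,\pi]$ and $u_i\in[0,\infty)$. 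On the left the same Schwinger step writes $\int_0^\infty e^{-4t}[I_0(xt)]^4\,\D t$ as $\frac{1}{\pi^4}\int(4-x\sum_{k=1}^4\cos\gamma_k)^{-1}$, i.e. $W_4^S(x)$ itself. The identity to be proved is thus an equality between a four-propagator integral and a six-fold integral of a squared propagator — the kind of dimension-shifting, Wick-rotation relation in which the four real lattice angles are traded for three hyperbolic angles of the position-space propagators. I would carry this out by evaluating the inner Laplace transform $\int_0^\infty t\,I_0(t)[I_0(xt)]^2 e^{-st}\,\D t$ in closed form and matching, invoking the classical Watson-integral evaluations of Glasser--Montaldi and Guttmann \cite{GlasserMontaldi1993,Guttmann2009} and, where a direct reduction stalls, an intermediate Meijer-$G$ representation.

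A cleaner but heavier alternative, which I would keep in reserve, is to show that both sides satisfy the same linear ODE in $x$ — the one inherited from the order-five symmetric-power operator annihilating $[I_0(xt)]^4$ — with identical data at $x=0$; equality then follows by uniqueness.

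I expect the main obstacle to be exactly this conversion of the all-$I_0$ lattice form into the mixed $I_0/K_0$ Bessel moment: both the closed-form evaluation of the intermediate Laplace transform and the justification of the interchanges at the borderline decay rate $x=1$ require care. Finally, the special case is read off by setting $x=1$: the right-hand side of \eqref{eq:W4x_BM} becomes $\tfrac{4}{\pi^2}\IKM(3,3;1)$, so comparison with \eqref{eq:LaportaW4(1)} is equivalent to the linear Bessel-moment relation $\IKM(3,3;1)=\tfrac{3}{\pi^2}\IKM(1,5;1)$, which I would establish by the same machinery or deduce from the sum-rule and determinant results underlying \eqref{eq:detN2}.
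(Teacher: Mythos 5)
Your opening step is sound: the Schwinger/Laplace representation $W_4^S(x)=\int_0^\infty e^{-4t}[I_0(xt)]^4\,\D t$ is correct (the paper itself derives exactly this, for $x=1$, in the following proposition on the Parseval representation), and your boundary checks at $x=0$ and $x\uparrow 1$ are fine. The problem is that everything after this point is a plan rather than a proof. The crux identity $\int_0^\infty e^{-4t}[I_0(xt)]^4\,\D t=\frac{4}{\pi^2}\int_0^\infty [I_0(xt)]^2I_0(t)[K_0(t)]^3\,t\,\D t$ is precisely the content of the proposition, and none of your proposed routes is carried out: the "inner Laplace transform" $\int_0^\infty t\,I_0(t)[I_0(xt)]^2e^{-st}\,\D t$ has no usable closed form (the closed form that does exist, \eqref{eq:IKM3F2}, is for $\int_0^\infty I_0(xt)I_0(t)[K_0(t)]^3t\,\D t$, a different object); rewriting the right-hand side as a six-fold integral over three circles and three hyperbolas gives no mechanism for matching it to the four-fold lattice integral; and the ODE alternative is only named, with neither the operator identified for both sides nor the initial data matched. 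You have, in effect, restated the theorem in several equivalent forms and flagged the hard step as "the main obstacle" without closing it.

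The gap is all the more striking because your own first step makes the paper's argument nearly automatic: expand $[I_0(xt)]^4=\sum_n\left(\frac{xt}{2}\right)^{2n}\sum_{j+k+\ell+m=n}(j!k!\ell!m!)^{-2}$ on the left (so only $\int_0^\infty e^{-4t}t^{2n}\,\D t=(2n)!/4^{2n+1}$ is needed) and $[I_0(xt)]^2=\sum_n\frac{(2n)!}{(n!)^4}\left(\frac{xt}{2}\right)^{2n}$ on the right, and the coefficient-by-coefficient identity reduces to the Bailey--Borwein--Broadhurst--Glasser evaluation of $\int_0^\infty I_0(t)[K_0(t)]^3t^{2n+1}\,\D t$ — the one genuinely nontrivial external input, which your proposal never invokes. (The paper phrases the left-hand expansion through the equivalent Glasser--Montaldi moment formula for the lattice integral.) Finally, your treatment of the special case also fails as stated: you would deduce $\pi^2\IKM(3,3;1)=3\IKM(1,5;1)$ from "the sum-rule and determinant results underlying \eqref{eq:detN2}", but that determinant involves only $\IKM(1,5;n)$ and $\IKM(2,4;n)$ and carries no information about $\IKM(3,3;1)$; the relation is a Wick-rotation/Hankel-contour identity, cited by the paper from \cite[Lemma 3.1]{HB1}, of the same kind as the vanishing contour integrals used later in \S\ref{sec:sunrise_analogs}.
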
\begin{proof}

Following Guttmann \cite[][\S3.1]{Guttmann2009}, we transcribe an identity of Glasser--Montaldi \cite[][(8)]{GlasserMontaldi1993} as follows:\begin{align}\begin{split}&
\frac{1}{4\pi^4}
\int_{0}^\pi\D \phi_1
\int_{0}^\pi\D \phi_2\int_{0}^\pi\D \phi_3
\int_{0}^\pi\D \phi_4\left(\frac{\sum_{k=1}^4\cos \phi_k}{4}\right)^{2n}\\={}&\frac{1}{2^{2(3n+1)}}\frac{(2n)!}{(n!)^2}\sum_{\substack{j,k,\ell,m\in\mathbb Z_{\geq0}\\j+k+\ell+m=n}}\left( \frac{n!}{j!k!\ell !m!} \right)^2,\quad \end{split}
\end{align}where $n$ is a non-negative integer. Meanwhile, from the work of Bailey--Borwein--Broadhurst--Glasser \cite[][\S4.1]{BBBG2008}, we know that \begin{align}
\int_0^\infty I_0(t)[K_0(t)]^3t^{2n+1}\D t=\frac{(n!)^{2}\pi^{2}}{2^{4(n+1)}}\sum_{\substack{j,k,\ell,m\in\mathbb Z_{\geq0}\\j+k+\ell+m=n}}\left( \frac{n!}{j!k!\ell !m!} \right)^2
\end{align} holds for all non-negative integers $n$.
Thus, we may prove \eqref{eq:W4x_BM} by termwise summation, bearing in mind that \begin{align}
[I_{0}(xt)]^{2}=\sum_{n=0}^\infty\frac{(2n)!}{(n!)^4}\left( \frac{xt}{2} \right)^{2n}.
\end{align}

Finally, the integral identity $ \pi^2\int_0^\infty [I_{0}(t)]^{3} [K_0(t)]^3t\D t=3\int_0^\infty I_0(t)[K_0(t)]^5t\D t$ has been proved in \cite[][Lemma 3.1]{HB1}, so  \eqref{eq:LaportaW4(1)}   is recovered. \end{proof}

We note that there have been previous efforts to represent $ W^S_4(1)$ (that is, the 4-loop sunrise diagram, up to a normalizing constant) as single integrals over familiar functions. For example, using Abel transforms, Glasser--Montaldi \cite[][(8), (A13)]{GlasserMontaldi1993} and Glasser--Guttman \cite[][(3)--(4)]{GlasserGuttmann1994} have shown that \begin{align}
W^S_4(1)=\frac{2}{\pi^{3}}\int_0^1\frac{\mathbf K(k_+)\mathbf K(k_-)}{\sqrt{1-x^2}}\D x
\end{align}where \begin{align}
k_\pm^2=\frac{1}{2} \left[1\pm x^2\sqrt{1-\frac{x^2}{4}} -\left(1-\frac{x^2}{2}\right) \sqrt{1-x^2}\right]
\end{align}and \begin{align} \mathbf K(\sqrt{\lambda})=\int_0^{\pi/2}\frac{\D\phi}{\sqrt{1-\lambda\sin^2\smash[b]{\phi}}}=\frac\pi2{_2F_1}\left(\left.\begin{array}{c}
\frac{1}{2},\frac{1}{2} \\[4pt]
1 \\
\end{array}  \right|\lambda\right)\end{align} is the complete elliptic integral of the first kind.
One can also build more recondite single integral representations for $ W_4^S(1)$, whose integrands involve closed-form expressions of the 3-dimensional Watson integral for the simple cubic lattice:\begin{align}
W_{3}^{S}(x):=\frac{1}{3\pi^3}
\int_{0}^\pi\D \phi_1
\int_{0}^\pi\D \phi_2\int_{0}^\pi\D \phi_3
\frac{1}{1-\frac{x}{3}\sum_{k=1}^3\cos \phi_k},
\end{align}  such as the following formulae established  by Joyce--Zucker \cite[][(3.32), (3.42)]{JoyceZucker2005}: \begin{align}
W_{3}^{S}(x)={}&\frac{2-\sqrt{1-x^2}}{3+x^2}\left[{_2F_1}\left(\left.\begin{array}{c}
\frac{1}{8},\frac{3}{8} \\[4pt]
1 \\
\end{array}  \right|\frac{16 x^2[9-5 x^2-(9-x^2) \sqrt{1-x^2}]^{2}}{9 (3+x^2)^4}\right)\right]^2,\\W_3^S(x)={}&\frac{1-9 p^4}{3(1-p)^3 (3 p+1)}\left[{_2F_1}\left(\left.\begin{array}{c}
\frac{1}{2},\frac{1}{2} \\[4pt]
1 \\
\end{array}  \right|\frac{16 p^3}{(1-p)^3 (3 p+1)}\right)\right]^2,
\end{align}where\begin{align}
p=\sqrt{\frac{1-\sqrt{1-\vphantom{\frac11}\smash[tb]{\frac{x^2}{9}}}}{1+\sqrt{1-x^2}}}.
\end{align}

There is another type of integral representation involving hypergeometric integrands, which in turn, is inspired by arithmetic considerations. Let $ \eta(z):=e^{\pi iz/12}\prod_{n=1}^\infty(1-e^{2\pi inz})$ be the Dedekind eta function, defined for complex numbers $z$ with a positive imaginary part.  It was conjectured in \cite[][(111)]{Broadhurst2016} and  proved in \cite[][Theorem 4.2.5]{Zhou2017WEF} that \begin{align}\;\;\;\;\;
\dia{\put(-100,0){\line(1,0){200}}
\put(0,15){\circle{100}}
\put(0,-15){\circle{100}}
\put(50,0){\vtx}
\put(-50,0){\vtx}
}{}\;\;\;=8\pi ^{2}L(f_{4,6},2):=-32\pi^4\int_{0}^{i\infty}f_{4,6}(z)z\D z,\label{eq:spec_L_val}
\end{align}for a weight-4 level-6 modular form $ f_{4,6}(z)=[\eta(z)\eta(2z)\eta(3z)\eta(6z)]^{2}$. Parametrizing  modular forms with hypergeometric functions, as in the proof of  \cite[][Theorems 4.2.5 and 4.2.6]{Zhou2017WEF}, we obtain\begin{align}\begin{split}
W_4^S(1)={}&\frac{\sqrt{3}}{\pi}\int_0^\infty {_2F_1}\left(\left.\begin{array}{c}
\frac{1}{3},\frac{2}{3} \\[4pt]
1 \\
\end{array}  \right|\frac{u^2 (9+u)}{(3+u)^3}\right){_2F_1}\left(\left.\begin{array}{c}
\frac{1}{3},\frac{2}{3} \\[4pt]
1 \\
\end{array}  \right|1-\frac{u^2 (9+u)}{(3+u)^3}\right)\frac{\D u}{(3+u)^2}\\={}&\frac{2\sqrt{3}}{\pi}\int_{-1}^0 {_2F_1}\left(\left.\begin{array}{c}
\frac{1}{3},\frac{2}{3} \\[4pt]
1 \\
\end{array}  \right|\frac{u^2 (9+u)}{(3+u)^3}\right){_2F_1}\left(\left.\begin{array}{c}
\frac{1}{3},\frac{2}{3} \\[4pt]
1 \\
\end{array}  \right|1-\frac{u^2 (9+u)}{(3+u)^3}\right)\frac{\D u}{(3+u)^2}.\end{split}
\end{align}Here, we point out that last integral representation is actually equivalent to a formula of Bailey--Borwein--Broadhurst--Glasser \cite[][(223)]{BBBG2008}
\begin{align}\begin{split}&
\int_0^\infty [I_{0}(t)]^{3} [K_0(t)]^3t\D t\equiv \IKM(3,3;1)\\={}&\frac{8}{\pi}\int_0^{1/3}\frac{y}{(3 y+1) (1-y)^3}\mathbf K\left( \sqrt{\frac{(1-3 y) (1+y)^3}{(1+3 y) (1-y)^3}} \right)\mathbf K\left( \sqrt{\frac{16 y^3}{(1+3 y) (1-y)^3}} \right)\D y,\end{split}
\end{align}according to  $ W_{4}^{S}(1)=\frac{4}{\pi^2}\IKM(3,3;1)$ [cf.~\eqref{eq:W4x_BM}] and Ramanujan's cubic transformations  \cite[][pp.~112--114]{RN5} for elliptic integrals: \begin{align}\begin{split}
 {_2F_1}\left(\left.\begin{array}{c}
\frac{1}{3},\frac{2}{3} \\[4pt]
1 \\
\end{array}  \right|\frac{27 p^2 (1+p)^2}{4 (1+p+p^2)^3}\right)={}&\frac{2}{\pi}\frac{1+p+p^2}{\sqrt{1+2p}}\mathbf K\left( \sqrt{\frac{p^3(2+p)}{1+2p}} \right),\\ {_2F_1}\left(\left.\begin{array}{c}
\frac{1}{3},\frac{2}{3} \\[4pt]
1 \\
\end{array}  \right|1-\frac{27 p^2 (1+p)^2}{4 (1+p+p^2)^3}\right)={}&\frac{2}{\pi}\frac{1+p+p^2}{\sqrt{3+6p}}\mathbf K\left( \sqrt{1-\frac{p^3(2+p)}{1+2p}} \right),\end{split}
\end{align}where $ p=\frac{2y}{1-y}$ and $y=\sqrt{\frac{1+u}{9+u}} $.

In what follows, we construct one more integral representation for $ W_4^S(1)$, by  Fourier analysis.\begin{proposition}[Parseval representation for $ W_4^{S}(1)$]We have the following formula:\begin{align}
W_{4}^{S}(1)=\frac{2}{\pi^{3}}\int_{0}^\infty\mathbf K\left( \frac{1}{1+ix} \right)\mathbf K\left( \frac{1}{1-ix} \right)\frac{\D x}{1+x^{2}}.
\end{align}\end{proposition}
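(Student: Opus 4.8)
The plan is to route the proof through the lattice Green's function representation of $W_4^S(1)$ and then invoke Plancherel's theorem. First I would insert $\frac{1}{1-\frac14\sum_k\cos\phi_k}=\int_0^\infty e^{-t}e^{\frac{t}{4}\sum_k\cos\phi_k}\D t$ into the definition of $W_4^S(1)$ and carry out the four angular integrations using $\frac1\pi\int_0^\pi e^{\frac t4\cos\phi}\D\phi=I_0(t/4)$, which gives $W_4^S(1)=\int_0^\infty e^{-4t}[I_0(t)]^4\D t$ after rescaling $t$. The point of this manoeuvre is that the integrand is a perfect square: writing $g(t):=e^{-2t}[I_0(t)]^2$, we have $W_4^S(1)=\int_0^\infty g(t)^2\D t$, and since $g(t)\sim\frac{1}{2\pi t}$ as $t\to\infty$ the function $g$ lies in $L^2(0,\infty)$, so Plancherel's identity $\int_0^\infty g^2\D t=\frac1{2\pi}\int_{-\infty}^\infty|\hat g(\omega)|^2\D\omega$ applies, with $\hat g(\omega)=\int_0^\infty g(t)e^{-i\omega t}\D t$.

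The heart of the computation is the evaluation of $\hat g$. Here $\hat g(\omega)=\int_0^\infty[I_0(t)]^2e^{-(2+i\omega)t}\D t$ is the Laplace transform of $[I_0(t)]^2$ evaluated on the line $\Re s=2$. I would obtain that Laplace transform from the product formula $[I_0(t)]^2=\frac1\pi\int_0^\pi I_0(2t\cos\theta)\D\theta$ together with $\int_0^\infty e^{-st}I_0(at)\D t=(s^2-a^2)^{-1/2}$, giving $\int_0^\infty[I_0(t)]^2e^{-st}\D t=\frac{2}{\pi s}\mathbf K\!\left(\frac2s\right)$ for $\Re s>2$. Setting $s=2+i\omega=2(1+ix)$ with $x=\omega/2$ collapses the modulus to exactly $\frac{2}{s}=\frac1{1+ix}$, so that $\hat g(\omega)=\frac{1}{\pi(1+ix)}\mathbf K\!\left(\frac1{1+ix}\right)$. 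This is the step that manufactures the complex elliptic modulus appearing in the statement.

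Finally I would assemble the pieces. Because $g$ is real, $\overline{\hat g(\omega)}=\hat g(-\omega)$, and because $\mathbf K$ has a real power series valid for $\left|\frac1{1+ix}\right|=(1+x^2)^{-1/2}<1$ we get $\overline{\mathbf K(\frac1{1+ix})}=\mathbf K(\frac1{1-ix})$; hence $|\hat g(\omega)|^2=\frac1{\pi^2(1+x^2)}\mathbf K(\frac1{1+ix})\mathbf K(\frac1{1-ix})$. Substituting into Plancherel's identity, changing variables $\omega=2x$, and using that the integrand is even in $x$ turns $\frac1{2\pi}\int_{-\infty}^\infty|\hat g|^2\D\omega$ into $\frac2{\pi^3}\int_0^\infty\mathbf K(\frac1{1+ix})\mathbf K(\frac1{1-ix})\frac{\D x}{1+x^2}$, which is the claimed formula. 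I expect the main obstacle to be analytic rather than algebraic: $g\in L^2\smallsetminus L^1$ (its tail decays only like $1/t$), so the Laplace integral converges absolutely only for $\Re s>2$ and $\hat g$ must be interpreted as the boundary value on $\Re s=2$, i.e.\ as the $L^2$-Fourier transform. Justifying that this boundary value is still $\frac{2}{\pi s}\mathbf K(2/s)$ for $\omega\neq0$ — the single point $\omega=0$, where $\mathbf K(1)$ diverges logarithmically, being harmless in the $\D\omega$ integral — is the delicate part, which I would settle by a dominated-convergence / Abel-limit argument as $\Re s\downarrow2$.
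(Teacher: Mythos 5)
Your proof is correct and follows essentially the same route as the paper's: both reduce $W_4^S(1)$ to $\int_0^\infty e^{-4t}[I_0(t)]^4\,\D t$, compute the Fourier transform of $e^{-2t}[I_0(t)]^2$ as $\frac{2}{\pi(2+i\omega)}\mathbf K\left(\frac{2}{2+i\omega}\right)$, and conclude by Parseval's theorem. The only differences are cosmetic: you derive that transform from the product formula for $[I_0(t)]^2$ and the elementary Laplace transform of $I_0$, where the paper cites Watson's Lipschitz--Hankel formula, and you make explicit the $L^2$-boundary-value justification that the paper leaves implicit.
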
\begin{proof}
Following Glasser--Montaldi \cite[][(4), (5), (6b)]{GlasserMontaldi1993} and Zucker \cite[][(6.6)--(6.8)]{Zucker2011}, we deduce\begin{align}\begin{split}
W_{4}^{S}(1)={}&\frac{1}{\pi^4}\int_0^\infty\D t\int_{0}^\pi\D \phi_1
\int_{0}^\pi\D \phi_2\int_{0}^\pi\D \phi_3
\int_{0}^\pi\D \phi_{4}e^{-4t+t(\cos \phi_1+\cos \phi_2+\cos \phi_3+\cos \phi_4)}\\={}&\int_0^\infty e^{-4t}[I_0(t)]^4\D t.\end{split}
\end{align}By a special case of the Lipschitz--Hankel formula \cite[][\S13.22(2)]{Watson1944Bessel}, we have \begin{align}
\int_0^\infty e^{-2t}[I_0(t)]^2e^{-i\omega t  }\D t=\frac{1}{2+i\omega}\frac{2}{\pi}\mathbf K\left( \frac{2}{2+i\omega} \right),\quad \forall\omega\in(-\infty,0)\cup(0,\infty).\end{align}According to Parseval's theorem in Fourier analysis, we then obtain\begin{align}
W_{4}^{S}(1)=\frac{2}{\pi^{3}}\int_{-\infty}^\infty\mathbf K\left( \frac{2}{2+i\omega} \right)\mathbf K\left( \frac{2}{2-i\omega} \right)\frac{\D \omega}{4+\omega^{2}},
\end{align}which is equivalent to the claimed identity.\end{proof}Unfortunately, we have not found  straightforward hypergeometric transformations from any of the aforementioned single integrals to Laporta's representation in \eqref{eq:Laporta_pFq}. Therefore, we will use different methods for the proof of  Laporta's hypergeometric  sunrise formulae.

\subsection{Hypergeometric reduction of 4-loop sunrise}Now, we employ Mellin transforms and Meijer $G$-functions to prove  \eqref{eq:Laporta_pFq}. \begin{proposition}[Hypergeometric evaluation of 4-loop sunrise]We have the following identity:\begin{align}\begin{split}
\;\;\;\;\;
\dia{\put(-100,0){\line(1,0){200}}
\put(0,15){\circle{100}}
\put(0,-15){\circle{100}}
\put(50,0){\vtx}
\put(-50,0){\vtx}
}{}\;\;\;={}&2^{4}\int_0^\infty I_0(t)[K_0(t)]^5t\D t\\={}&\frac{4 \pi ^{5/2}}{\sqrt{3}}\left\{ \frac{\sqrt{3} }{2^6 }\left[\frac{\Gamma \left(\frac{1}{3}\right)}{\sqrt{\pi}}\right]^9\, _4F_3\left(\left. \begin{array}{c}\frac{1}{6},\frac{1}{3},\frac{1}{3},\frac{1}{2}\\[4pt]\frac{2}{3},\frac{5}{6},\frac{5}{6}\end{array} \right|1\right)-\frac{2^{4}}{3}\left[\frac{\sqrt{\pi}}{\Gamma \left(\frac{1}{3}\right)}\right]^9\, _4F_3\left(\left. \begin{array}{c}\frac{1}{2},\frac{2}{3},\frac{2}{3},\frac{5}{6}\\[4pt]\frac{7}{6},\frac{7}{6},\frac{4}{3}\end{array} \right|1\right) \right\}.\end{split}
\label{eq:IKM151_4F3_sum}\end{align} \end{proposition}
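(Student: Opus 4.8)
The plan is to reduce \eqref{eq:IKM151_4F3_sum} to the closed-form summation of a single explicit series, and then to read off the two ${}_4F_3(\cdots\,|\,1)$ terms as two families of residues of one Meijer $G$-function at unit argument. The starting point is the Proposition establishing \eqref{eq:W4x_BM}: evaluating it at $x=1$ gives $W_4^S(1)=\frac{4}{\pi^2}\IKM(3,3;1)$, and together with \eqref{eq:LaportaW4(1)} and the quoted identity $\pi^2\IKM(3,3;1)=3\IKM(1,5;1)$ this yields $2^4\IKM(1,5;1)=\frac{4\pi^4}{3}W_4^S(1)$. Feeding in the Glasser--Montaldi multinomial sum and the Bailey--Borwein--Broadhurst--Glasser evaluation of $\IKM(1,3;2n+1)$ used in that same proof, and expanding $[I_0(t)]^2$ termwise, I obtain the wholly explicit series
\[
2^4\IKM(1,5;1)=\frac{\pi^4}{3}\sum_{n=0}^\infty\binom{2n}{n}\frac{1}{64^{n}}\sum_{\substack{j,k,\ell,m\in\mathbb Z_{\geq0}\\ j+k+\ell+m=n}}\left(\frac{n!}{j!\,k!\,\ell!\,m!}\right)^{2},
\]
so that everything now rests on summing the right-hand side (this is the ``manipulation of infinite series'' route, as opposed to any hypergeometric transformation of the single-integral representations recorded earlier).

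The engine is a Mellin--Barnes reduction. I would first collapse the inner multinomial sum to a single sum, $\sum_{p=0}^{n}\binom{n}{p}^{2}\binom{2p}{p}\binom{2n-2p}{n-p}$, write each central binomial as a ratio of gamma functions, and insert a Mellin--Barnes contour to decouple the summation over $p$ from that over $n$. Performing the resulting Barnes-type integrations converts the double sum into a single Mellin--Barnes integral $\frac{1}{2\pi i}\int\Psi(s)\,\D s$, that is, a Meijer $G$-function in the sense of \eqref{eq:MeijerG_defn}, whose integrand $\Psi(s)$ is a ratio of gamma functions carrying the spectrum $\{\tfrac13,\tfrac23,\tfrac12\}$. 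The decisive structural feature — which I would confirm by inspecting the poles of $\Psi$ — is that these poles fall into two interleaved arithmetic progressions whose base points differ by $\tfrac13$. Indeed the two ${}_4F_3$'s in \eqref{eq:IKM151_4F3_sum} satisfy the relation that the full parameter list of the second one (its three lower parameters together with the implicit $n!$, set against its four upper parameters) is precisely that of the first shifted by $+\tfrac13$; this is exactly the signature of two such residue families.

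Closing the contour and summing residues then delivers the two series directly: the progression based at offset $0$ produces ${}_4F_3\!\left(\tfrac16,\tfrac13,\tfrac13,\tfrac12;\tfrac23,\tfrac56,\tfrac56;1\right)$, while the progression based at offset $\tfrac13$ produces ${}_4F_3\!\left(\tfrac12,\tfrac23,\tfrac23,\tfrac56;\tfrac76,\tfrac76,\tfrac43;1\right)$, the opposite overall sign arising from the factor $\Gamma\!\left(-\tfrac13\right)$ that sits at the second base point. The leading residue at each base point supplies the gamma-function prefactor, and I would reduce these to pure powers of $\Gamma(\tfrac13)$ via the Legendre--Gauss multiplication formula, exactly as the paper already does in passing from the unsimplified to the simplified form of \eqref{eq:Laporta_pFq}, obtaining $\tfrac{\sqrt3}{2^6}\bigl[\Gamma(\tfrac13)/\sqrt\pi\bigr]^{9}$ and $-\tfrac{2^4}{3}\bigl[\sqrt\pi/\Gamma(\tfrac13)\bigr]^{9}$ and completing the identification with the right-hand side. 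As a cross-check one may instead evaluate the single integral $\int_0^1[{}_2F_1(\tfrac13,\tfrac23;1;x)]^2\frac{\D x}{\sqrt{1-x}}$ by the same device, representing $(1-x)^{-1/2}$ and the symmetric square of the hypergeometric function by Meijer $G$-functions and using the master formula for the integral of their product; this should land on the very same contour integral.

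The main obstacle is the middle step: pinning down the exact Meijer $G$-function — its orders $(m,n,p,q)$ and its parameter vector — so that the two residue families reproduce these two specific ${}_4F_3$'s, and justifying the attendant analysis (convergence of the Mellin--Barnes integral, legitimacy of closing the contour, and absolute convergence licensing the interchange of summation and integration in the reduction of the double sum). A secondary, purely computational difficulty is evaluating the gamma-ratio prefactor $\Psi$ at both base points and verifying that the Legendre--Gauss reduction yields precisely the constants $\tfrac{\sqrt3}{2^6}$ and $-\tfrac{2^4}{3}$ alongside the ninth powers of $\Gamma(\tfrac13)/\sqrt\pi$. Once the Meijer $G$-function is correctly set up, both the residue split and the constant-chasing are routine.
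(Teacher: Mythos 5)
Your opening reduction closely parallels the paper's own: \eqref{eq:W4x_BM} at $x=1$ together with $\pi^{2}\IKM(3,3;1)=3\IKM(1,5;1)$ from \cite[Lemma 3.1]{HB1} turns \eqref{eq:IKM151_4F3_sum} into the evaluation of $\IKM(3,3;1)$, i.e.\ of your double series $\sum_{p,m\geq 0}\frac{(2p+2m)!\,(2p)!\,(2m)!}{(p!)^{4}(m!)^{4}\,64^{p+m}}$, and the paper likewise finishes by a Mellin--Barnes evaluation of exactly this quantity. The gaps are in the two steps you declare routine; they are precisely the two nontrivial inputs of the paper's proof. First, ``performing the resulting Barnes-type integrations'' is not routine: the central binomials supply only half-integer gamma parameters, and the decoupled inner sums are ${}_3F_2(1)$'s, not Gauss-summable ${}_2F_1(1)$'s, with no classical summation identified for them (pushed naively, the beta-integral decoupling of the $\binom{2n}{n}$ weight merely reconstitutes a fourth power of $I_0$ and goes in a circle). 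The thirds in your ``spectrum $\{\frac13,\frac12,\frac23\}$'' do not come from the binomials at all; they enter only through Rogers' cubic-transformation formula \eqref{eq:IKM3F2} \cite[(3.6)]{Rogers2009} --- equivalently through a $\Gamma\left(3s-\frac12\right)$ factor split by triplication after the angular integration --- a substantive identity your sketch never invokes.

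Second, and decisively: when the computation is actually carried out (the paper does it by applying \eqref{eq:I0_add} and Euler's beta integral to the Mellin--Barnes form \eqref{eq:IvKM231_MB} of Rogers' formula), the integrand obtained is, up to constants, $\Gamma\left(\frac13-s\right)\left[\Gamma\left(\frac12-s\right)\right]^{2}\Gamma\left(\frac23-s\right)\Gamma\left(s-\frac16\right)\Gamma\left(s+\frac16\right)/[\Gamma(1-s)]^{2}$, and it does \emph{not} have only your two families of poles: besides the simple poles at $s=n+\frac13$ and $s=n+\frac23$ it has \emph{double} poles at $s=n+\frac12$, since $\left[\Gamma\left(\frac12-s\right)\right]^{2}$ is not cancelled by $[\Gamma(1-s)]^{2}$. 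Closing the contour rightwards therefore yields a third, digamma-laden series, and the residue computation does not terminate as you describe; this is exactly the phenomenon behind the paper's remark that the standard Slater-type decomposition ``does not apply'' to the $G$-functions in \eqref{eq:LaportaMeijer}. The paper disposes of this term by citing the Whipple--Meijer formula \cite[\S 4.6.2]{Slater}; the alternative device, used in Proposition \ref{prop:Laporta_Pnu_sqr}, is to close the contour both ways and eliminate the digamma series between the two resulting expressions. You need one of these (or an equivalent), and supplying it is the real content of the proof. A clean integrand possessing only your two pole families does exist --- it is $\varPhi(s)$ of Proposition \ref{prop:IKM15n_MB} --- but the paper can certify $\IKM(1,5;1)=\frac{1}{2\pi i}\int\varPhi(s)\,\D s$ only \emph{after} \eqref{eq:IKM151_4F3_sum} is proved; positing such an integrand a priori, on the grounds that the two ${}_4F_3$'s exhibit the ``shift by $\frac13$'' signature, would be circular.
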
\begin{proof}Combining \cite[][(3.1.11)]{Zhou2017WEF} with \cite[][(3.6)]{Rogers2009}, we put down\begin{align}\int_0^\infty I_{0}(xt)I_0(t)[K_0(t)]^3t\D t=
\frac{\pi^{2}}{4(4-x^{2})}
{_3F_2}\left(\left.\begin{array}{c}
\frac{1}{3},\frac{1}{2},\frac{2}{3} \\[4pt]1,1 \\
\end{array}\right|-\frac{108 x^2}{(4-x^2)^3}\right),\label{eq:IKM3F2}
\end{align}for $ x\in[0,2)$. We can rewrite the formula above by a contour integral representation of $ _3F_2$ \cite[][\S4.6.2]{Slater}:\begin{align}\begin{split}&
\int_0^\infty I_{0}(xt)I_0(t)[K_0(t)]^3t\D t\\={}&\frac{\pi^{2}}{4(4-x^{2})}\frac{1}{2\pi i}\int_{\delta-i\infty}^{\delta+i\infty}\frac{\sqrt{3}\Gamma \left(\frac{1}{3}-s\right) \Gamma \left(\frac{1}{2}-s\right) \Gamma \left(\frac{2}{3}-s\right)  \Gamma (s)}{2 \pi ^{3/2} [\Gamma (1-s)]^2}\left[\frac{108 x^2}{(4-x^2)^3}\right]^{-s}\D s,\end{split}\label{eq:IvKM231_MB}
\end{align} where $ \delta\in\left(0,\frac13\right)$.

By the Neumann   addition formula \cite[][\S11.2(1)]{Watson1944Bessel}, we have \begin{align}
[I_{0}(t)]^{2}=\frac{2}{\pi}\int_0^\pi I_0(2t\cos\theta)\D\theta,\label{eq:I0_add}
\end{align}so we can exploit Euler's beta integral to compute\begin{align}\begin{split}&
\int_0^\infty [I_{0}(t)]^{3} [K_0(t)]^3t\D t\\={}&\frac{1}{2\pi i}\int_{\delta-i\infty}^{\delta+i\infty}\frac{\Gamma \left(\frac{1}{3}-s\right) \left[\Gamma \left(\frac{1}{2}-s\right)\right]^2 \Gamma \left(\frac{2}{3}-s\right) \Gamma \left(s-\frac{1}{6}\right) \Gamma \left(s+\frac{1}{6}\right)}{32 \sqrt{3} \pi[  \Gamma (1-s)]^2}\D s\end{split}
\end{align}for $ \delta\in\left(\frac16,\frac13\right)$. According to the Whipple--Meijer formula \cite[][\S4.6.2]{Slater}, the right-hand side of the equation above evaluates to\begin{align}
\frac{\sqrt{3 \pi }}{4}\left\{ \frac{\sqrt{3} }{2^6 }\left[\frac{\Gamma \left(\frac{1}{3}\right)}{\sqrt{\pi}}\right]^9\, _4F_3\left(\left. \begin{array}{c}\frac{1}{6},\frac{1}{3},\frac{1}{3},\frac{1}{2}\\[4pt]\frac{2}{3},\frac{5}{6},\frac{5}{6}\end{array} \right|1\right)-\frac{2^{4}}{3}\left[\frac{\sqrt{\pi}}{\Gamma \left(\frac{1}{3}\right)}\right]^9\, _4F_3\left(\left. \begin{array}{c}\frac{1}{2},\frac{2}{3},\frac{2}{3},\frac{5}{6}\\[4pt]\frac{7}{6},\frac{7}{6},\frac{4}{3}\end{array} \right|1\right)\right\},
\end{align} which is also the same as  $ \frac{3}{\pi^{2}}\int_0^\infty I_0(t)[K_0(t)]^5t\D t$.    \end{proof}\begin{proposition}[Laporta's single integral for 4-loop sunrise]\label{prop:Laporta_Pnu_sqr}We have \begin{align}\begin{split}{}&
\int_{0}^1\left[ _2F_1\left(\left.\begin{array}{c}
\frac{1}{3},\frac{2}{3} \\
1 \\
\end{array}  \right|x\right)\right]^2\frac{\D x}{\sqrt{1-x}}\\={}&G_{4,4}^{2,2}\left(1\left|
\begin{array}{c}
 \frac{1}{2},\frac{1}{2},\frac{1}{3},\frac{2}{3} \\[4pt]
 0,0,-\frac{1}{6},\frac{1}{6} \\
\end{array}
\right.\right)=\frac{3}{4 \pi ^2}G_{4,4}^{2,4}\left(1\left|
\begin{array}{c}
 \frac{1}{3},\frac{1}{2},\frac{1}{2},\frac{2}{3} \\[4pt]
 0,0,-\frac{1}{6},\frac{1}{6} \\
\end{array}
\right.\right)\\={}&\frac{9}{\sqrt{\pi }}\left\{ \frac{\sqrt{3} }{2^6 }\left[\frac{\Gamma \left(\frac{1}{3}\right)}{\sqrt{\pi}}\right]^9\, _4F_3\left(\left. \begin{array}{c}\frac{1}{6},\frac{1}{3},\frac{1}{3},\frac{1}{2}\\[4pt]\frac{2}{3},\frac{5}{6},\frac{5}{6}\end{array} \right|1\right)-\frac{2^{4}}{3}\left[\frac{\sqrt{\pi}}{\Gamma \left(\frac{1}{3}\right)}\right]^9\, _4F_3\left(\left. \begin{array}{c}\frac{1}{2},\frac{2}{3},\frac{2}{3},\frac{5}{6}\\[4pt]\frac{7}{6},\frac{7}{6},\frac{4}{3}\end{array} \right|1\right)\right\}.\end{split}\label{eq:LaportaMeijer}
\end{align}\end{proposition}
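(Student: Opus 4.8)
The content of the statement is the closed-form evaluation of Laporta's single integral $\int_0^1[{}_2F_1(\tfrac13,\tfrac23;1;x)]^2(1-x)^{-1/2}\,\D x$; once this is in hand, comparison with the hypergeometric-evaluation proposition above yields Laporta's formula~\eqref{eq:Laporta_pFq} (and hence also~\eqref{eq:LaportaWatson}). Note that this evaluation must be carried out independently of the sunrise, since the chain ``single integral $\to$ sunrise'' is exactly what combining the two propositions is meant to produce. The plan is to pass to a Mellin--Barnes representation, recognize it as the Meijer $G$-function $G^{2,2}_{4,4}$, convert it to the $G^{2,4}_{4,4}$ form, and finally extract the two ${}_4F_3$ series by a residue computation identical to the one already performed for $\IKM(3,3;1)$.

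First I would produce the Mellin--Barnes integral. Expanding one Gauss factor in its Maclaurin series and integrating term by term against $(1-x)^{-1/2}\,{}_2F_1(\tfrac13,\tfrac23;1;x)$ by means of Euler's Beta integral $\int_0^1 x^n(1-x)^{-1/2}\,\D x=n!\sqrt\pi/\Gamma(n+\tfrac32)$ produces a double hypergeometric sum. I would then repackage this sum, through the reciprocal-Gamma weights it carries, as the single contour integral $\frac1{2\pi i}\int \frac{[\Gamma(\tfrac12-s)]^2[\Gamma(s)]^2}{\Gamma(\tfrac13+s)\Gamma(\tfrac23+s)\Gamma(\tfrac76-s)\Gamma(\tfrac56-s)}\,\D s$, which is $G^{2,2}_{4,4}(1\,|\,\tfrac12,\tfrac12,\tfrac13,\tfrac23;0,0,-\tfrac16,\tfrac16)$. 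This is the first displayed equality.

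The second equality is a Meijer $G$-function transformation relating two functions with the same parameter list but different indices $(m,n)$, obtained by applying Euler's reflection formula to the pair $\Gamma(\tfrac13+s),\Gamma(\tfrac23+s)$: this trades the denominator factors for numerator factors $\Gamma(\tfrac13-s),\Gamma(\tfrac23-s)$, turning $n=2$ into $n=4$, and the constant $\tfrac3{4\pi^2}=[\Gamma(\tfrac13)\Gamma(\tfrac23)]^{-2}$ records the value $\Gamma(\tfrac13)\Gamma(\tfrac23)=2\pi/\sqrt3$. For the third equality I would close the contour and collect residues: the two simple pole families $s=\tfrac13+\ell$ and $s=\tfrac23+\ell$ (from the numerator factors $\Gamma(\tfrac13-s)$ and $\Gamma(\tfrac23-s)$) generate precisely the two balanced ${}_4F_3(\cdots\,|\,1)$ series displayed, and reduction of the accompanying Gamma factors reproduces the prefactors $\tfrac{\sqrt3}{2^6}[\Gamma(\tfrac13)/\sqrt\pi]^9$ and $-\tfrac{2^4}3[\sqrt\pi/\Gamma(\tfrac13)]^9$. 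This is exactly the Whipple--Meijer reduction applied to $\int_0^\infty[I_0(t)]^3[K_0(t)]^3 t\,\D t$ in the preceding proof, so it transfers verbatim.

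The main obstacle is the first equality. Because $c=1$ is not $a+b+\tfrac12=\tfrac32$, Clausen's formula does not apply and $[{}_2F_1(\tfrac13,\tfrac23;1;x)]^2$ is not a single ${}_3F_2$; the Mellin transform of the square is therefore a priori a double Mellin--Barnes / Kamp\'e-de-F\'eriet object, and collapsing it to the four-over-four Gamma quotient above is where the real work lies. I expect to need a Barnes-type summation lemma, or equivalently the cubic modular structure of ${}_2F_1(\tfrac13,\tfrac23;1;\cdot)$ already exploited through the Ramanujan cubic transformation earlier in this section. A secondary subtlety is that the repeated parameters $\tfrac12,\tfrac12$ and $0,0$ create double poles at $s=\tfrac12+\ell$; the associated digamma/logarithmic residue terms must be shown to cancel in the aggregate, so that only the two clean ${}_4F_3$ families survive, a cancellation that the Whipple--Meijer theorem guarantees.
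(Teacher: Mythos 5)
Your overall skeleton (Mellin--Barnes representation, then $G^{2,2}_{4,4}\to G^{2,4}_{4,4}$, then residue extraction of two ${}_4F_3$ series) matches the paper's, and your target integrand $[\Gamma(\tfrac12-s)]^2[\Gamma(s)]^2/[\Gamma(\tfrac13+s)\Gamma(\tfrac23+s)\Gamma(\tfrac76-s)\Gamma(\tfrac56-s)]$ is correct; but each of the three equalities has a genuine problem in your execution. For the first equality you expand both Gauss factors, arrive at a Kamp\'e de F\'eriet double sum, and explicitly defer its collapse to ``a Barnes-type summation lemma'' or to cubic modularity --- that deferral is never discharged, so the first equality is simply not proved. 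The paper never meets this double sum: it quotes the closed-form Mellin transform of a \emph{single} factor, $\int_0^1 {_2F_1}(-\nu,\nu+1;1;1-t)\,t^{s-1}\D t=[\Gamma(s)]^2/[\Gamma(s-\nu)\Gamma(s+\nu+1)]$, as in \eqref{eq:Pnu_Mellin}, and then applies Mellin--Parseval convolution to the product of two copies; setting $\alpha=-\tfrac12$, $\nu=-\tfrac13$ instantly produces the eight-gamma integrand above. That is the missing idea: take the Mellin transform \emph{before} squaring, not after.

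The second and third equalities fail as you argue them. Reflection gives $1/[\Gamma(\tfrac13+s)\Gamma(\tfrac23+s)]=\Gamma(\tfrac23-s)\Gamma(\tfrac13-s)\sin(\pi(\tfrac13+s))\sin(\pi(\tfrac23+s))/\pi^2$, and the sine product equals $\tfrac14[1+2\cos(2\pi s)]$, which is \emph{not} constant along the contour: it is $\tfrac34$ at integers, $-\tfrac14$ at half-integers, and $0$ at $s\equiv\pm\tfrac13\pmod 1$. So the two integrands are not proportional --- they do not even have the same pole sets --- and $G^{2,2}_{4,4}=\tfrac{3}{4\pi^2}G^{2,4}_{4,4}$ is a statement about the two \emph{functions}, which the paper proves by showing both are annihilated by the same fourth-order operator and matching asymptotics as $z\to0^+$. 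Likewise, your appeal to Whipple--Meijer to dispose of the double-pole (digamma) contributions is exactly what the paper's closing Remark rules out: the standard decomposition fails here because $[\Gamma(s)]^2$ creates double poles, which is also why the earlier computation for $\IKM(3,3;1)$ in \eqref{eq:IKM151_4F3_sum} (where the relevant pole families, coming from $\Gamma(s-\tfrac16)\Gamma(s+\tfrac16)$, were simple) does not transfer verbatim. Finally, the digamma series does not cancel: the paper closes the contour both rightwards and leftwards, finds from the leftward closure that $G^{2,4}_{4,4}(1)$ equals $-\tfrac13$ times that very series, and then eliminates it between the two equations --- this bookkeeping is precisely what converts the prefactor $\tfrac{3}{4\pi^2}$ into the $\tfrac{3}{16\pi^2}$ multiplying the two clean ${}_4F_3$ sums in the paper's proof. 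If the digamma terms simply ``canceled,'' as you assert, your final answer would come out four times too large.
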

\begin{proof}First, we transcribe  \cite[][p.~316, (15)]{ET2} as follows:\begin{align}
\int_0^1 {_2F_1}\left(\left.\begin{array}{c}
-\nu,\nu+1\ \\[4pt]
1 \\
\end{array}  \right|1-t\right )t^{s-1}\D t=\frac{[\Gamma (s)]^2}{\Gamma (s-\nu ) \Gamma (s+\nu +1)},\quad \R s>0.\label{eq:Pnu_Mellin}
\end{align}By Mellin convolution, we have \begin{align}\begin{split}&
\int_0^1 \left[{_2F_1}\left(\left.\begin{array}{c}
-\nu,\nu+1\ \\[4pt]
1 \\
\end{array}  \right|1-t\right )\right]^2t^\alpha\D t\\={}&\frac{1}{2\pi i}\int_{\delta-i\infty}^{\delta+i\infty}\frac{[\Gamma (\alpha +1-s)]^2[\Gamma (s)]^2\D s}{\Gamma (s-\nu ) \Gamma (s+\nu +1)\Gamma (\alpha +1- s-\nu ) \Gamma (\alpha +2-s+\nu)},\end{split}
\end{align}where $ \alpha\in(-1,\infty),\delta\in(0,\alpha+1)$. Setting $ \alpha=-1/2,\nu=-1/3$ in the equation above, we can verify the first equality in \eqref{eq:LaportaMeijer}.

Before proving the second equality in \eqref{eq:LaportaMeijer}, we note that the Meijer $G$-function\begin{align}
G_{4,4}^{2,2}\left(z\left|
\begin{array}{c}
 \frac{1}{2},\frac{1}{2},\frac{1}{3},\frac{2}{3} \\[4pt]
 0,0,-\frac{1}{6},\frac{1}{6} \\
\end{array}
\right.\right)
\end{align}is annihilated by a fourth-order differential operator \cite[][(34)]{Meijer1946b}:\begin{align}
z\left( z\frac{\D}{\D z}+\frac{1}{3} \right)\left( z\frac{\D}{\D z}+\frac{1}{2} \right)^2\left( z\frac{\D}{\D z}+\frac{2}{3} \right)-\left(z\frac{\D}{\D z}\right)^2\left( z\frac{\D}{\D z}-\frac{1}{6} \right)\left( z\frac{\D}{\D z}+\frac{1}{6} \right).
\end{align}More generally, the kernel space of this  differential operator is spanned by four functions:\begin{align}\left\{\begin{array}{r@{\,=\,}l}f_{1}(z)&\smash[t]{\dfrac{1}{z^{1/6}}{_4F_3}\left(\left. \begin{array}{c}\frac{1}{6},\frac{1}{3},\frac{1}{3},\frac{1}{2}\\[4pt]\frac{2}{3},\frac{5}{6},\frac{5}{6}\end{array} \right|z\right)},\\[12pt]f_2(z)&z^{1/6}{_4F_3}\left(\left. \begin{array}{c}\frac{1}{2},\frac{2}{3},\frac{2}{3},\frac{5}{6}\\[4pt]\frac{7}{6},\frac{7}{6},\frac{4}{3}\end{array} \right|z\right) ,\\[12pt]f_3(z)&{_4F_3}\left(\left. \begin{array}{c}\frac{1}{3},\frac{1}{2},\frac{1}{2},\frac{2}{3}\\[4pt]\frac{5}{6},1,\frac{7}{6}\end{array} \right|z\right) ,\\[12pt]f_4(z)&\smash[b]{G_{4,4}^{2,4}\left(z\left|
\begin{array}{c}
 \frac{1}{3},\frac{1}{2},\frac{1}{2},\frac{2}{3} \\[4pt]
 0,0,-\frac{1}{6},\frac{1}{6} \\
\end{array}
\right.\right)}.\end{array}\right.
\end{align}  which  exhibit the following asymptotic behavior, as $z\to0^+ $:\begin{align}\left\{\begin{array}{r@{\,=\,}l}f_{1}(z)&\smash[t]{\dfrac{1}{z^{1/6}}+\dfrac{z^{5/6}}{50}+O(z^{11/6})},\\[12pt]f_2(z)&z^{1/6}+\dfrac{5 z^{7/6}}{49}+O(z^{13/6}),\\[12pt]f_3(z)&1+\dfrac{2 z}{35}+O(z^2),\\[12pt]f_4(z)&\smash[b]{2 \sqrt{3} \pi  (6-\log z)+\dfrac{2 \sqrt{3} \pi  z (109-70 \log z)}{1225}+O(z^2\log z)}.\end{array}\right.
\end{align}Comparing the list above  with \begin{align}
G_{4,4}^{2,2}\left(z\left|
\begin{array}{c}
 \frac{1}{2},\frac{1}{2},\frac{1}{3},\frac{2}{3} \\[4pt]
 0,0,-\frac{1}{6},\frac{1}{6} \\
\end{array}
\right.\right)=\frac{3 \sqrt{3} (6-\log z)}{2 \pi }+\frac{3 \sqrt{3} z (109-70 \log z)}{2450 \pi }+O(z^2\log z),
\end{align}we can show that\begin{align}
G_{4,4}^{2,2}\left(z\left|
\begin{array}{c}
 \frac{1}{2},\frac{1}{2},\frac{1}{3},\frac{2}{3} \\[4pt]
 0,0,-\frac{1}{6},\frac{1}{6} \\
\end{array}
\right.\right)=\frac{3}{4 \pi ^2}G_{4,4}^{2,4}\left(z\left|
\begin{array}{c}
 \frac{1}{3},\frac{1}{2},\frac{1}{2},\frac{2}{3} \\[4pt]
 0,0,-\frac{1}{6},\frac{1}{6} \\
\end{array}
\right.\right),
\end{align} which embodies the second equality in \eqref{eq:LaportaMeijer} as a special case.

To prove the last equality in \eqref{eq:LaportaMeijer}, we apply residue calculus to the Mellin--Barnes integral  representation of the Meijer $G$-function in question. Concretely speaking,  by closing the contour rightwards, we have \begin{align}\begin{split}&
\frac{3}{4 \pi ^2}G_{4,4}^{2,4}\left(1\left|
\begin{array}{c}
 \frac{1}{3},\frac{1}{2},\frac{1}{2},\frac{2}{3} \\[4pt]
 0,0,-\frac{1}{6},\frac{1}{6} \\
\end{array}
\right.\right)=\frac{3}{4 \pi ^2}\frac{1}{2\pi i}\int_{\frac14-i\infty}^{\frac14+i\infty}\frac{\Gamma \left(\frac{1}{3}-s\right) \left[\Gamma \left(\frac{1}{2}-s\right)\right]^2 \Gamma \left(\frac{2}{3}-s\right) [\Gamma (s)]^2}{\Gamma \left(\frac{5}{6}-s\right) \Gamma \left(\frac{7}{6}-s\right)}\D s\\={}&\frac{3}{4 \pi ^2}\sum_{n=0}^\infty\frac{(-1)^{n} \left[\Gamma \left(\frac{1}{6}-n\right)\right]^2 \Gamma \left(\frac{1}{3}-n\right)\left[ \Gamma \left(n+\frac{1}{3}\right)\right]^2}{n! \Gamma \left(\frac{1}{2}-n\right) \Gamma \left(\frac{5}{6}-n\right)}+\frac{3}{4 \pi ^2}\sum_{n=0}^{\infty}\frac{\Gamma \left(-\frac{1}{6}-n\right) \Gamma \left(\frac{1}{6}-n\right) \left[\Gamma \left(n+\frac{1}{2}\right)\right]^2 }{(n!)^2 \Gamma \left(\frac{1}{3}-n\right) \Gamma \left(\frac{2}{3}-n\right)}\times\\{}&\times\left[-\psi ^{(0)}\left(\frac{1}{3}-n\right)-\psi ^{(0)}\left(\frac{2}{3}-n\right)+\psi ^{(0)}\left(\frac{1}{6}-n\right)+\psi ^{(0)}\left(-\frac{1}{6}-n\right)+2 \psi ^{(0)}(n+1)-2 \psi ^{(0)}\left(n+\frac{1}{2}\right)\right]\\{}&+\frac{3}{4 \pi ^2}\sum_{n=0}^\infty\frac{(-1)^{n} \Gamma \left(-\frac{1}{3}-n\right) \left[\Gamma \left(-\frac{1}{6}-n\right)\right]^2 \left[\Gamma \left(n+\frac{2}{3}\right)\right]^2}{n!\Gamma \left(\frac{1}{6}-n\right) \Gamma \left(\frac{1}{2}-n\right)},
\end{split}\end{align}  where the three sums are attributed to residues at $ s=n+\frac13,n+\frac{1}{2},n+\frac{2}{3}$ for $ n\in\mathbb Z_{\geq0}$, and $ \psi^{(0)}(z)=\D \log \Gamma(z)/\D z$; by closing the contour leftwards and collecting residues at  $ s=-n$ for $ n\in\mathbb Z_{\geq0}$, we get \begin{align}\begin{split}&
\frac{3}{4 \pi ^2}G_{4,4}^{2,4}\left(1\left|
\begin{array}{c}
 \frac{1}{3},\frac{1}{2},\frac{1}{2},\frac{2}{3} \\[4pt]
 0,0,-\frac{1}{6},\frac{1}{6} \\
\end{array}
\right.\right)=\frac{3}{4 \pi ^2}\frac{1}{2\pi i}\int_{\frac14-i\infty}^{\frac14+i\infty}\frac{\Gamma \left(\frac{1}{3}-s\right) \left[\Gamma \left(\frac{1}{2}-s\right)\right]^2 \Gamma \left(\frac{2}{3}-s\right) [\Gamma (s)]^2}{\Gamma \left(\frac{5}{6}-s\right) \Gamma \left(\frac{7}{6}-s\right)}\D s\\={}&-\frac{1}{4 \pi ^2}\sum_{n=0}^{\infty}\frac{\Gamma \left(-\frac{1}{6}-n\right) \Gamma \left(\frac{1}{6}-n\right) \left[\Gamma \left(n+\frac{1}{2}\right)\right]^2 }{(n!)^2 \Gamma \left(\frac{1}{3}-n\right) \Gamma \left(\frac{2}{3}-n\right)}\times
\\{}&\times\left[-\psi ^{(0)}\left(\frac{1}{3}-n\right)-\psi ^{(0)}\left(\frac{2}{3}-n\right)+\psi ^{(0)}\left(\frac{1}{6}-n\right)+\psi ^{(0)}\left(-\frac{1}{6}-n\right)+2 \psi ^{(0)}(n+1)-2 \psi ^{(0)}\left(n+\frac{1}{2}\right)\right].\end{split}\end{align} Eliminating the last series from the last pair of equations, we obtain\begin{align}\begin{split}&
\frac{3}{4 \pi ^2}G_{4,4}^{2,4}\left(1\left|
\begin{array}{c}
 \frac{1}{3},\frac{1}{2},\frac{1}{2},\frac{2}{3} \\[4pt]
 0,0,-\frac{1}{6},\frac{1}{6} \\
\end{array}
\right.\right)\\={}&\frac{3}{16 \pi ^2}\left\{\sum_{n=0}^\infty\frac{(-1)^{n} \left[\Gamma \left(\frac{1}{6}-n\right)\right]^2 \Gamma \left(\frac{1}{3}-n\right)\left[ \Gamma \left(n+\frac{1}{3}\right)\right]^2}{n! \Gamma \left(\frac{1}{2}-n\right) \Gamma \left(\frac{5}{6}-n\right)}+\sum_{n=0}^\infty\frac{(-1)^{n} \Gamma \left(-\frac{1}{3}-n\right) \left[\Gamma \left(-\frac{1}{6}-n\right)\right]^2 \left[\Gamma \left(n+\frac{2}{3}\right)\right]^2}{n!\Gamma \left(\frac{1}{6}-n\right) \Gamma \left(\frac{1}{2}-n\right)}\right\}\\={}&\frac{9}{\sqrt{\pi }}\left\{ \frac{\sqrt{3} }{2^6 }\left[\frac{\Gamma \left(\frac{1}{3}\right)}{\sqrt{\pi}}\right]^9\, _4F_3\left(\left. \begin{array}{c}\frac{1}{6},\frac{1}{3},\frac{1}{3},\frac{1}{2}\\[4pt]\frac{2}{3},\frac{5}{6},\frac{5}{6}\end{array} \right|1\right)-\frac{2^{4}}{3}\left[\frac{\sqrt{\pi}}{\Gamma \left(\frac{1}{3}\right)}\right]^9\, _4F_3\left(\left. \begin{array}{c}\frac{1}{2},\frac{2}{3},\frac{2}{3},\frac{5}{6}\\[4pt]\frac{7}{6},\frac{7}{6},\frac{4}{3}\end{array} \right|1\right)\right\}, \end{split}
\end{align}by direct summation.\end{proof}\begin{remark}If all the poles in  $ \prod_{j=1}^n\Gamma(1-a_{j}-s)$ [resp.~$\prod_{k=1}^m\Gamma(b_{k}+s)$] are simple in  \eqref{eq:MeijerG_defn}, then the Meijer $G$-function $ G^{m,n}_{p,q}$ decomposes into a linear combination of $ _qF_{p-1}$ (resp.~$_pF_{q-1}$), as indicated in \cite[][5.3(6)]{HTF1} (resp.~\cite[][5.3(5)]{HTF1}). Such a standard decomposition does not apply to the two  $G$-functions in \eqref{eq:LaportaMeijer}.   \eor\end{remark}\section{Analogs of Laporta's 4-loop sunrise formulae\label{sec:sunrise_analogs}}\subsection{Bailey--Meijer reductions of certain hypergeometric series}In the notations of Zudilin \cite[][Proposition 2]{Zudilin2004odd_zeta} and Borwein--Straub--Wan \cite[][Figure 3]{BSW2013},  we paraphrase an identity of Bailey \cite[][(3.4)]{Bailey1932} in terms of the Meijer $G$-function:\begin{align}
\begin{split}&
{_7F_6}\left(\left. \begin{array}{c}
a,1+\frac{a}{2},b,c,d,e,f \\[4pt]
\frac{a}{2},1+a-b,1+a-c,1+a-d,1+a-e,1+a-f \\
\end{array} \right|1\right)\\={}&\frac{\Gamma (1+a-b) \Gamma (1+a-c) \Gamma (1+a-d) \Gamma (1+a-e) \Gamma (1+a-f)}{\Gamma (a+1) \Gamma (b) \Gamma (c) \Gamma (d) \Gamma (1+a-b-c) \Gamma (1+a-b-d) \Gamma (1+a-c-d) \Gamma (1+a-e-f)}\times\\{}&\times G_{4,4}^{2,4}\left(1\left|
\begin{array}{c}
 e+f-a,1-b,1-c,1-d \\[4pt]
 0,1+a-b-c-d,e-a,f-a \\
\end{array}
\right.\right).
\end{split}   \label{eq:Bailey7F6}
\end{align}
\begin{proposition}[Bailey representations of 4-loop sunrise]We have \begin{align}\begin{split}
\int_{0}^1\left[ _2F_1\left(\left.\begin{array}{c}
\frac{1}{3},\frac{2}{3} \\[4pt]
1 \\
\end{array}  \right|x\right)\right]^2\frac{\D x}{\sqrt{1-x}}
={}&\frac{9}{2} \, _7F_6\left(\left.   \begin{array}{c}
\frac{1}{3},\frac{1}{3},\frac{1}{2},\frac{1}{2},\frac{2}{3},\frac{2}{3},\frac{5}{4} \\[4pt]
\frac{1}{4},\frac{5}{6},\frac{5}{6},1,\frac{7}{6},\frac{7}{6} \\
\end{array}\right|1\right)\\={}&2^{14/3} \sqrt{3} \left[\frac{\sqrt{\pi }}{\Gamma \left(\frac{1}{3}\right)}\right]^6{_6F_5}\left(\left.   \begin{array}{c}
\frac{1}{2},\frac{1}{2},\frac{1}{2},\frac{2}{3},\frac{2}{3},\frac{4}{3} \\[4pt]
\frac{1}{3},1,\frac{7}{6},\frac{7}{6},\frac{7}{6} \\
\end{array}\right|1\right)\\={}&\frac{3 \sqrt{3} }{2^{11/3}}\left[\frac{\Gamma \left(\frac{1}{3}\right)}{\sqrt{\pi }}\right]^6{_5F_4}\left(\left.   \begin{array}{c}
\frac{1}{3},\frac{1}{3},\frac{1}{2},\frac{1}{2},\frac{1}{2} \\[4pt]
\frac{5}{6},\frac{5}{6},\frac{5}{6},1 \\
\end{array}\right|1\right).\end{split}\label{eq:Bailey_redn}\end{align}\end{proposition}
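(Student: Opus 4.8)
The plan is to read all three right-hand sides of \eqref{eq:Bailey_redn} as very-well-poised hypergeometric series at unit argument and to feed each into Bailey's Meijer-$G$ representation \eqref{eq:Bailey7F6}. The organizing principle is the evaluation already supplied by Proposition \ref{prop:Laporta_Pnu_sqr}: the integral on the left equals $\frac{3}{4\pi^2}$ times the Meijer $G$-function with upper parameters $\frac13,\frac12,\frac12,\frac23$ and lower parameters $0,0,-\frac16,\frac16$, which I denote $G_0$. If each of the three series can be matched, through \eqref{eq:Bailey7F6}, to a gamma-multiple of the single function $G_0$, then all three evaluations drop out at once and the problem reduces to bookkeeping of gamma prefactors.

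First I would dispatch the ${}_7F_6$, which is very-well-poised with leading parameter $a=\frac12$. Taking the distinguished pair $(e,f)=(\frac13,\frac23)$ and $\{b,c,d\}=\{\frac13,\frac12,\frac23\}$ in \eqref{eq:Bailey7F6}, one verifies that the upper row $e+f-a,1-b,1-c,1-d$ becomes $\frac12,\frac23,\frac12,\frac13$ and the lower row $0,1+a-b-c-d,e-a,f-a$ becomes $0,0,-\frac16,\frac16$, that is, exactly the parameters of $G_0$. The accompanying gamma prefactor simplifies, via the reflection formula $\Gamma(s)\Gamma(1-s)=\pi/\sin\pi s$ (so that $\Gamma(\frac16)\Gamma(\frac56)=2\pi$ and $\Gamma(\frac13)\Gamma(\frac23)=2\pi/\sqrt3$), to $\frac{1}{6\pi^2}$; combined with the factor $\frac{3}{4\pi^2}$ from Proposition \ref{prop:Laporta_Pnu_sqr} this produces the coefficient $\frac92$.

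For the ${}_6F_5$ and the ${}_5F_4$ I would first record two elementary order-lowering facts about very-well-poised series: when $f=\frac{1+a}{2}$ the ratio $(f)_n/(1+a-f)_n$ is identically $1$, so a very-well-poised ${}_7F_6$ collapses to a very-well-poised ${}_6F_5$, while when $f=\frac a2$ both very-well-poised factors $(1+\frac a2)_n/(\frac a2)_n$ cancel and the ${}_7F_6$ reduces to a well-poised ${}_5F_4$. Reading the ${}_6F_5$ as the instance $a=\frac23,\,f=\frac56$ and the ${}_5F_4$ as the instance $a=\frac13,\,f=\frac16$, and in both cases promoting the value $\frac12$ into the distinguished slot $e=\frac12$ with $\{b,c,d\}=\{\frac12,\frac12,\frac23\}$ respectively $\{\frac13,\frac12,\frac12\}$, I would check that \eqref{eq:Bailey7F6} carries each of them to the very same $G_0$. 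The feature that makes the proposition work is precisely that $b,c,d,e$ enter these two series symmetrically, so one is free to move the value $\frac12$ into the slot that Bailey's identity treats asymmetrically; a different placement of $e$ lands on a genuinely different $G^{2,4}_{4,4}$ and the identity would fail.

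The real labor is the simplification of the remaining two prefactors, which do not rationalize. Combining the reflection formula with Legendre's duplication formula in the guises $\Gamma(\frac16)\Gamma(\frac23)=2^{2/3}\sqrt\pi\,\Gamma(\frac13)$ and $\Gamma(\frac13)\Gamma(\frac56)=2^{1/3}\sqrt\pi\,\Gamma(\frac23)$, I expect the ${}_6F_5$ prefactor to collapse to $2^{14/3}\sqrt3\,[\sqrt\pi/\Gamma(\frac13)]^6$ and the ${}_5F_4$ prefactor to $\frac{3\sqrt3}{2^{11/3}}[\Gamma(\frac13)/\sqrt\pi]^6$, in agreement with \eqref{eq:Bailey_redn}. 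As a routine preliminary I would also confirm convergence at unit argument, the excesses $\sum b_k-\sum a_j$ being $1,\frac23,\frac43$ for the three series and hence positive, which legitimizes the termwise handling of the Mellin--Barnes integral defining $G_0$. The difficulty here is entirely computational rather than structural: the one conceptual trap is the asymmetry of \eqref{eq:Bailey7F6} in $(e,f)$ against $(b,c,d)$, where an unlucky assignment silently replaces $G_0$ by another Meijer $G$-function.
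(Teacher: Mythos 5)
Your proposal is correct and takes essentially the same route as the paper: both proofs combine Bailey's identity \eqref{eq:Bailey7F6} with the Meijer-$G$ evaluation of the integral from Proposition \ref{prop:Laporta_Pnu_sqr}, recognizing each right-hand side as a (possibly degenerate) very-well-poised ${_7F_6}$ whose Bailey image is exactly $G_{4,4}^{2,4}\left(1\left|\begin{smallmatrix}\frac13,\frac12,\frac12,\frac23\\ 0,0,-\frac16,\frac16\end{smallmatrix}\right.\right)$, and then reduce the gamma prefactors. The only difference is direction of bookkeeping — the paper enumerates the $24$ admissible assignments of $(a,b,c,d,e,f)$ producing that $G$-function and notes the cancellations yielding the ${_6F_5}$ and ${_5F_4}$, while you verify the three specific assignments (including the correct placement of the asymmetric slots $e,f$) — which is not a substantive distinction.
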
\begin{proof}There are 24 different choices of $a,b,c,d,e,f$ that make Bailey's identity applicable to the special $ G_{4,4}^{2,4}$ appearing in \eqref{eq:LaportaMeijer}. Due to the invariance of the generalized hypergeometric series
\begin{align}{_7F_6}\left(\left.\begin{array}{c}
a_{1},\dots,a_7 \\[4pt]
b_{1},\dots,b_6 \\
\end{array}\right| 1\right):=1+\sum_{n=1}^\infty\frac{\prod_{j=1}^7(a_{j})_{n}}{\prod_{k=1}^6(b_k)_{n} }\frac{1}{n!},
\end{align}  under permutations of its parameters, we are left with only three distinct forms of $_7F_6$ as outputs from Bailey's identity. One of them simplifies to $ _6F_5$ (resp.~$ _5F_4$), with cancelations from $ a_1=b_1$ (resp.~$ a_1=b_1,a_2=b_2$). This explains all the stated results.    \end{proof}
\begin{remark}
Following Wan \cite[][Theorem 1]{Wan2012}, we recapitulate a special case of Zudilin's integral formula \cite{Zudilin2002}:\begin{align}\begin{split}
 &  \int_0^1 \int_0^1 \int_0^1 \frac{x^{a_2-1}y^{a_3-1}z^{a_4-1}(1-x)^{a_0-a_2-a_3} (1-y)^{a_0-a_3-a_4} (1-z)^{a_0-a_4-a_5}}{\{1-x[1-y(1-z)]\}^{a_1}} \, \D x\D y\D z\\
 = {}& \frac{\Gamma(a_0+1)\prod_{j=2}^4 \Gamma(a_j) \prod_{j=1}^4 \Gamma(a_0+1-a_j-a_{j+1})}{\prod_{j=1}^5 \Gamma(a_0+1-a_j)}\times  \\
 {}&\times{ _7F_6}\left(\left. {{a_0,1+\frac{a_{0}}{2},a_1,a_2,a_3,a_4,a_5}\atop{\frac{a_0}{2},1+a_0-a_1,1+a_0-a_2,1+a_0-a_3,1+a_0-a_4,1+a_0-a_5}}\right| 1 \right),\end{split}\label{eq:Zudilin7F6}
\end{align}where the  chosen indices $ a_0,a_1,\dots,a_5$ ensure convergence of both sides.
Using \begin{align}
\int_\xi^1\frac{\D y}{\sqrt[3]{y(1-y)(y-\xi)^2}}={}&\frac{2\pi}{\sqrt{3}} {_2F_1}\left(\left.\begin{array}{c}
\frac{1}{3},\frac{2}{3} \\[4pt]
1 \\
\end{array}  \right|1-\xi\right),\\\int_0^1\frac{\D x}{\sqrt[3]{x(1-x)^{2}\{1-x[1-y(1-z)]\}}}={}&\frac{2\pi}{\sqrt{3}} {_2F_1}\left(\left.\begin{array}{c}
\frac{1}{3},\frac{2}{3} \\[4pt]
1 \\
\end{array}  \right|1-y(1-z)\right),
\end{align}and a variation on Wan's method \cite[][p.~124]{Wan2012}, we can show that  \begin{align}\begin{split}
\int_{0}^1\left[ _2F_1\left(\left.\begin{array}{c}
\frac{1}{3},\frac{2}{3} \\
1 \\
\end{array}  \right|1-\xi\right)\right]^2\frac{\D \xi}{\sqrt{\xi}}={}&\frac{\sqrt{3}}{2\pi}\int_{0}^1\left[\int_\xi^1\frac{\D y}{\sqrt[3]{\vphantom1\smash[b]y(1-\vphantom1\smash[b]y)(\vphantom1\smash[b]y-\smash[b]{\xi})^2}}\right] {_2F_1}\left(\left.\begin{array}{c}
\frac{1}{3},\frac{2}{3} \\[4pt]
1 \\
\end{array}  \right|1-\xi\right)\frac{\D \xi}{\sqrt{\xi}}\\\xlongequal{\xi= y(1-z)}{}&\frac{\sqrt{3}}{2\pi}\int_{0}^1\int_0^1\frac{{_2F_1}\left(\left.\begin{smallmatrix}
\frac{1}{3},\frac{2}{3} \\
1 \\
\end{smallmatrix}  \right|1-y(1-z)\right)}{\sqrt{\vphantom1\smash[b]y}\sqrt[3]{(1-y)z^2}} \frac{\D y\D z}{\sqrt{1-z}}\\={}&\frac{3}{4\pi^2}\int_0^1 \int_0^1 \int_0^1 \sqrt[3]{\frac{(1-x)^{-2} (1-y)^{-1} }{{1-x[1-y(1-z)]}}} \, \frac{\D x\D y\D z}{\sqrt{ \vphantom1\smash[b]y(1-z)}\sqrt[3]{xz^{2}}}.
\end{split}\end{align}Setting $ a_0=\frac{1}{2},a_1=\frac{1}{3},a_2=\frac{2}{3},a_3=\frac{1}{2},a_4=\frac{1}{3},a_5=\frac{2}{3}$ in  \eqref{eq:Zudilin7F6}, we immediately recover the first equality in \eqref{eq:Bailey_redn} from Zudilin's formula.\eor\end{remark}\begin{remark} At present,  aside from  [cf.~\eqref{eq:spec_L_val}]\begin{align}\begin{split}&
\frac{4  \sqrt{3} \pi ^3}{27} \int_{0}^1\left[ _2F_1\left(\left.\begin{array}{c}
\frac{1}{3},\frac{2}{3} \\[4pt]
1 \\
\end{array}  \right|x\right)\right]^2\frac{\D x}{\sqrt{1-x}}\\={}&8\pi ^{2}L(f_{4,6},2)=-32\pi^4\int_{0}^{i\infty}[\eta(z)\eta(2z)\eta(3z)\eta(6z)]^{2}z\D z,\end{split}
\end{align} we are not able to further reduce the special values of $ _pF_{p-1}$ (with $ p\in\{7,6,5,4\}$) appearing in  \eqref{eq:LaportaMeijer} and  \eqref{eq:Bailey_redn}  to more familiar mathematical constants.  However, we do not exclude the possibility of finding their closed-form evaluations\footnote{It is arguable whether $ L(f_{4,6},2)$ should count as a closed-form evaluation in its own right. As one may recall, Bloch--Kerr--Vanhove \cite{BlochKerrVanhove2015} and Samart \cite{Samart2016}  have expressed the 3-loop sunrise diagram $ 2^3\int_0^\infty I_0(t)[K_0(t)]^4t\D t$  as $ \frac{12\pi}{\sqrt{15}}L(f_{3,15},2)$, for a modular form $f_{3,15}(z)=[\eta(3z)\eta(5z)]^3+[\eta(z)\eta(15z)]^3$ of weight 3 and  level 15. Meanwhile, according to the work of Rogers--Wan--Zucker \cite{RogersWanZucker2015}, such a special $L$-value can be reduced to a product of  gamma values at rational arguments, thus leaving us a formula $ 2^3\int_0^\infty I_0(t)[K_0(t)]^4t\D t=\frac{1}{30\sqrt{5}}\Gamma\left( \frac{1}{15} \right)\Gamma\left( \frac{2}{15} \right)\Gamma\left( \frac{4}{15} \right)\Gamma\left( \frac{8}{15} \right)$ (see \cite[][Theorem 2.2.2]{Zhou2017WEF} for a simplified proof of this integral identity). At the time of writing, it is not clear to us if the special $L$-value  $ L(f_{4,6},2)$ admits a similar reduction.} in future efforts. Later in this article, we will also keep some generalized hypergeometric expressions ``unevaluated'', due to our current lack of quantitative understanding for them.   \eor\end{remark}

\subsection{Mellin--Barnes representations of some Bessel moments}To prepare for computations later in this article, we represent certain linear combinations of Feynman diagrams as Meijer $G$-functions.
\begin{lemma}[Mellin--Barnes representations for  Feynman integrals]For $u\in(0,4)$, we have \begin{align}
\begin{split}&\int_0^\infty I_0(\sqrt{u}t)[K_0(t)]^4t\D t+4\int_0^\infty K_0(\sqrt{u}t)I_{0}(t)[K_0(t)]^3t\D t\\={}&\frac{\sqrt{3}\pi^{3/2}}{4(4-u)}\frac{1}{2\pi i }\int_{\frac14-i\infty}^{\frac14+i\infty}\frac{ \Gamma \left(\frac{1}{3}-s\right) \Gamma \left(\frac{2}{3}-s\right) \left[\Gamma (s)\right]^2}{ \Gamma (1-s) \Gamma \left(s+\frac{1}{2}\right)}\left[ \frac{108 u}{(4-u)^3} \right]^{-s}\D s\label{eq:MB1}\end{split}
\intertext{and}\begin{split}&\int_{0}^\infty K_{0}(\sqrt{u}t)[I_{0}(t)]^{2} [K_0(t)]^2t\D t+\int_{0}^\infty I_{0}(\sqrt{u}t)I_{0}(t) [K_0(t)]^3t\D t\\={}&\frac{\sqrt{3}}{8 \pi ^{3/2} (4-u)}\frac{1}{2\pi i }\int_{\frac14-i\infty}^{\frac14+i\infty}\Gamma \left(\frac{1}{3}-s\right) \Gamma \left(\frac{1}{2}-s\right) \Gamma \left(\frac{2}{3}-s\right) [\Gamma (s)]^3\left[ \frac{108 u}{(4-u)^3} \right]^{-s}\D s.\label{eq:MB2}\end{split}\end{align}\end{lemma}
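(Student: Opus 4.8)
The plan is to build everything on the seed identity \eqref{eq:IvKM231_MB}, which (upon setting $x=\sqrt u$) already furnishes the Mellin--Barnes kernel $G_0(s):=\Gamma\!\left(\frac13-s\right)\Gamma\!\left(\frac12-s\right)\Gamma\!\left(\frac23-s\right)\Gamma(s)/[\Gamma(1-s)]^2$ with argument $z:=108u/(4-u)^3$ for the moment $\int_0^\infty I_0(\sqrt u t)I_0(t)[K_0(t)]^3t\,\D t$. The first observation is structural: using only the reflection formulas $\Gamma(s)\Gamma(1-s)=\pi/\sin\pi s$ and $\Gamma\!\left(\frac12+s\right)\Gamma\!\left(\frac12-s\right)=\pi/\cos\pi s$, the kernels on the right-hand sides of \eqref{eq:MB1} and \eqref{eq:MB2} are exactly $G_0$ times a trigonometric factor, namely $G_0(s)\cot\pi s$ for \eqref{eq:MB1} and $G_0(s)(\pi\csc\pi s)^2$ for \eqref{eq:MB2} (the scalar prefactors absorbing the remaining powers of $\pi$). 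Thus the entire content of the lemma is to show that replacing some $I_0$ factors by $K_0$ factors, in the specific linear combinations displayed, multiplies the seed kernel by precisely these factors.

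The mechanism I would use to realize these trigonometric factors is the connection formula $K_0(te^{\pm i\pi})=K_0(t)\mp i\pi I_0(t)$ (and likewise for $K_0(\sqrt u\,t\,e^{\pm i\pi})$), applied to an all-$K_0$ master integral. Starting from $R(u):=\int_0^\infty K_0(\sqrt u t)[K_0(t)]^4t\,\D t$ and continuing $t\mapsto te^{-i\pi}$, the binomial expansion of $[K_0(t)+i\pi I_0(t)]^4$ together with $K_0(\sqrt u t)+i\pi I_0(\sqrt u t)$ produces, at first order in $i\pi$, exactly the integrand $I_0(\sqrt u t)[K_0(t)]^4+4K_0(\sqrt u t)I_0(t)[K_0(t)]^3$ of \eqref{eq:MB1}; the coefficient $4$ is the binomial factor $\binom41$. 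On the Mellin--Barnes side this first-order monodromy is the jump of $z^{-s}$ across the cut, which converts the seed kernel into $G_0(s)$ times a single reflection factor, i.e.\ $\cot\pi s$. The second identity \eqref{eq:MB2}, whose kernel carries the squared factor $(\pi\csc\pi s)^2$ and whose leftward residues will involve the digamma/logarithmic terms $\psi^{(0)}$ already encountered in Proposition~\ref{prop:Laporta_Pnu_sqr}, is obtained the same way at the next order: it records the second variation of the corresponding master under the same continuation, the $[\Gamma(s)]^3$ triple poles reflecting the double application of the connection formula. I would pin down the precise combination for \eqref{eq:MB2} by matching against $\int_0^\infty I_0(\sqrt u t)I_0(t)[K_0(t)]^3t\,\D t$, which is itself the second summand on its left-hand side and whose kernel is the bare $G_0$.

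Concretely, I would carry out the proof in the kernel variable rather than on the integrals themselves, so that all manipulations take place inside absolutely convergent Mellin--Barnes integrals: first, write each Bessel moment appearing on the left-hand sides as $\frac{1}{2\pi i}\int_{(\delta)}\widetilde G(s)\,z^{-s}\,\D s$ by the method that produced \eqref{eq:IvKM231_MB}, reading off $\widetilde G$ from the connection-formula expansion above; second, add the two moments with the stated weights and verify, by the reflection formulas, that the combined kernel collapses to $G_1$ and $G_2$ respectively; third, fix the common contour at $\delta=\tfrac14$ and track the scalar prefactors $\sqrt3\,\pi^{3/2}/[4(4-u)]$ and $\sqrt3/[8\pi^{3/2}(4-u)]$, using that the extra $\pi$-powers are exactly those emitted by $\Gamma(s)\Gamma(1-s)$ and $\Gamma\!\left(\frac12\pm s\right)$. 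The main obstacle is analytic rather than algebraic: because $I_0$ grows like $e^t$, the contour $\arg t=0\to\pi$ cannot be rotated naively for the $I_0$-bearing integrands, so I must justify that the termwise connection-formula expansion really computes the analytic continuation of the convergent master integral, and that the individual summands—some of which, such as $\int_0^\infty K_0(\sqrt u t)[I_0(t)]^2[K_0(t)]^2t\,\D t$, converge on their own while others only converge for $u<4$—can be separated. Controlling these convergence windows, and organizing the logarithmic $\psi^{(0)}$-contributions from the multiple poles so that they assemble into the clean kernels $G_1$ and $G_2$, is where the real work lies.
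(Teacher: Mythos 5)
Your opening observation is correct and genuinely useful: with $G_0(s):=\Gamma\left(\frac13-s\right)\Gamma\left(\frac12-s\right)\Gamma\left(\frac23-s\right)\Gamma(s)/[\Gamma(1-s)]^2$ the seed kernel of \eqref{eq:IvKM231_MB}, the kernels of \eqref{eq:MB1} and \eqref{eq:MB2} are exactly $G_0(s)\cot(\pi s)$ and $G_0(s)\,\pi^2/\sin^2(\pi s)$, up to the displayed prefactors. But since these multipliers are $1$-periodic in $s$, all this shows is that the three Mellin--Barnes integrals obey one and the same two-term recurrence, hence satisfy one and the same differential equation in $u$; it does not identify which solution equals which combination of Bessel moments, and the mechanism you propose for that identification fails. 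The rotation $t\mapsto te^{-i\pi}$ applied to $\int_0^\infty K_0(\sqrt u\,t)[K_0(t)]^4t\,\D t$ is not an admissible deformation: by $K_0(te^{-i\pi})=K_0(t)+i\pi I_0(t)$ and $I_0(t)\sim e^t/\sqrt{2\pi t}$, the rotated integrand grows like $e^{(4+\sqrt u)\,t}$, so the rotated integral diverges and there is no convergent object whose ``expansion in $i\pi$'' one could take. Relatedly, ``first order in $i\pi$'' is not a well-defined extraction: the connection formula is an exact identity, not a perturbative one, and taking imaginary parts mixes the binomial terms $k=1,3,5$. This is precisely why the contour arguments in this paper and its references only ever rotate phase-balanced products of Hankel functions, as in \eqref{eq:ab_a+b_cancel}, and why the combination on the left of \eqref{eq:MB1} arises in the cited literature from Wick rotation of the all-$J_0$ integral [see \eqref{eq:p4_norm}], not of the all-$K_0$ one. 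Note also that the first step of your ``concrete'' plan --- writing each individual moment on the left-hand sides as a Mellin--Barnes integral ``by the method that produced \eqref{eq:IvKM231_MB}'' --- is circular: that method rests on a known ${}_3F_2$ evaluation, which exists for the seed but not for the individual moments here; only the stated combinations have clean kernels, and that is the very content of the lemma.

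The proposal also fails algebraically for \eqref{eq:MB2}: the coefficient of $(i\pi)^2$ in $[K_0(\sqrt u\,t)+i\pi I_0(\sqrt u\,t)][K_0(t)+i\pi I_0(t)]^4$ is $6K_0(\sqrt u\,t)[I_0(t)]^2[K_0(t)]^2+4I_0(\sqrt u\,t)I_0(t)[K_0(t)]^3$, with ratio $6:4$, whereas \eqref{eq:MB2} concerns the combination with ratio $1:1$; so \eqref{eq:MB2} does not ``record the second variation'' of your master integral, and your proposed repair (recombining with the seed moment) presupposes exactly the order-two kernel identity you have not established. The paper's proof runs along entirely different lines: by \cite[][Lemma 4.2]{Zhou2017BMdet} both left-hand sides are annihilated by Vanhove's third-order operator \eqref{eq:VL3}; after the substitution $w=-108u/(4-u)^3$ each must therefore be a linear combination of the three explicit solutions $g_1,g_2,g_3$; and the coefficients are fixed by asymptotic matching --- at $u\to0^+$ via the identification \eqref{eq:p4_norm} with Kluyver's density and its $\log u$ asymptotics for \eqref{eq:MB1}, and via the $\frac{1}{32}\log^2\frac4u$ behaviour at $u\to0^+$ together with Broadhurst's evaluation \eqref{eq:Broadhurst2008id} at $u=4$ for \eqref{eq:MB2}. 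Any rescue of your plan would have to supply equivalent endpoint information to pin down the solution, at which point you would have reconstructed the paper's argument.
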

\begin{proof}From \cite[][Lemma 4.2]{Zhou2017BMdet}, we know that the left-hand sides of both \eqref{eq:MB1} and \eqref{eq:MB2} are annihilated by Vanhove's third-order differential operator  \cite[][Table 1, $n=4$]{Vanhove2014Survey}\begin{align}\begin{split}{}&
u^2 (u - 4) (u - 16)\frac{\D^3}{\D u^3}+6 u (u^2 - 15 u + 32)\frac{\D^2}{\D u^2}\\&+(7 u^2 - 68 u + 64)\frac{\D}{\D u}+(u-4).\label{eq:VL3}
\end{split}\end{align}Suppose that  the left-hand of either  \eqref{eq:MB1} or \eqref{eq:MB2} takes the form $ \frac{1}{4-u}g\left( -\frac{108 u}{(4-u)^3} \right)$, and set  $ w=-\frac{108 u}{(4-u)^3}$, then we can check that $ g(w)$ satisfies the following homogeneous differential equation:\begin{align}
9 (w-1) w^2 g'''(w)+\frac{27}{2} (3 w-2) w g''(w)+(29 w-9) g'(w)+g(w)=0.
\end{align}Thus, the function $ g(w)$ must be a linear combination of three solutions:\vspace{.5em}\begin{align}
\left\{\begin{array}{r@{\,=\,}l}g_1(w)&\smash[t]{{_3F_2}\left(\left.\begin{array}{c}
\frac{1}{3},\frac{1}{2},\frac{2}{3} \\[4pt]1,1 \\
\end{array}\right|w\right)},\\[12pt]g_2(w)&G_{3,3}^{2,3}\left(w\left|
\begin{array}{c}
 \frac{1}{3},\frac{1}{2},\frac{2}{3} \\[4pt]
 0,0,0 \\
\end{array}
\right.\right),\\[12pt]g_3(w)&\smash[b]{G_{3,3}^{3,3}\left(-w\left|
\begin{array}{c}
 \frac{1}{3},\frac{1}{2},\frac{2}{3} \\[4pt]
 0,0,0 \\
\end{array}
\right.\right)}.\end{array}\right.
\end{align}\vspace{.3em}

\noindent The exact contribution from each member in this basis set can be determined by asymptotic analysis, which will occupy the rest of this proof.

First, we consider   \eqref{eq:MB1}. In \cite[][Propositions 3.1.2 and 5.1.4]{Zhou2017WEF}, we have effectively shown  that \begin{align}\begin{split}&
\int_0^\infty I_0(\sqrt{u}t)[K_0(t)]^4t\D t+4\int_0^\infty K_0(\sqrt{u}t)I_{0}(t)[K_0(t)]^3t\D t\\={}&\frac{\pi^4}{6}\int_0^\infty J_0(\sqrt{u}t)[J_0(t)]^4t\D t=:\frac{\pi^{4}}{6}\frac{p_4(\sqrt{u})}{\sqrt{u}}\end{split}\label{eq:p4_norm}
\end{align}holds for  $ 0<u<4$. Here, $ J_0(x):=\frac{2}{\pi}\int_0^{\pi/2}\cos(x\cos\varphi)\D \varphi$ is the Bessel function of the first kind and zeroth order, while $ p_4(x):=\int_0^\infty J_0(xt)[J_0(t)]^4 xt\D t,x>0$ is Kluyver's probability density for the distance $x$ traveled by a rambler walking in the Euclidean plane, taking 4 consecutive and independent unit steps, each aiming at uniformly distributed directions \cite{BSWZ2012}. As $ u\to 0^+$, we compare\begin{align}
\left\{\begin{array}{r@{\,=\,}l}\dfrac{1}{4-u}g_1\left(-\dfrac{108 u}{(4-u)^3}\right)&\dfrac{1}{4}+O(u),\\[12pt]\dfrac{1}{4-u}g_2\left(-\dfrac{108 u}{(4-u)^3}\right)&\dfrac{\pi ^{3/2} }{2 \sqrt{3}}\left(-i \pi +\log\dfrac{64}{u} \right)+O(u\log u),\\[12pt]\dfrac{1}{4-u}g_3\left(-\dfrac{108 u}{(4-u)^3}\right)&\dfrac{\pi ^{3/2}\log u }{4 \sqrt{3}}\log \dfrac{u}{4096}+O(1) ,\end{array}\right.\label{eq:near0}
\end{align} with the asymptotic behavior of Kluyver's probability density \cite[][Example 4.3 and Theorem 4.4]{BSWZ2012} \begin{align} \frac{p_4(\sqrt{u})}{\sqrt u}=-\frac{3\log u}{4\pi^{2}}+O(1),\end{align}  we  arrive at an expression \begin{align}\begin{split}
\frac{p_4(\sqrt{u})}{\sqrt u}={}&\frac{3 \sqrt{3}}{2 \pi ^{7/2}(4-u)}\left[ \frac{2 i \pi ^{5/2}}{\sqrt{3}}g_1\left( -\frac{108 u}{(4-u)^3} \right)+g_{2} \left( -\frac{108 u}{(4-u)^3} \right)\right]\\={}&\frac{1}{4-u}\frac{1}{2\pi i }\int_{\frac14-i\infty}^{\frac14+i\infty}\frac{3 \sqrt{3} \Gamma \left(\frac{1}{3}-s\right) \Gamma \left(\frac{2}{3}-s\right) \left[\Gamma (s)\right]^2}{2 \pi ^{5/2} \Gamma (1-s) \Gamma \left(s+\frac{1}{2}\right)}\left[ \frac{108 u}{(4-u)^3} \right]^{-s}\D s.\end{split}\label{eq:p4_Mellin_Barnes}
\end{align} This proves    \eqref{eq:MB1}.

Next, we study   \eqref{eq:MB2}, which essentially says that \begin{align}\begin{split}&
\int_{0}^\infty K_{0}(\sqrt{u}t)[I_{0}(t)]^{2} [K_0(t)]^2t\D t\\={}&\frac{\sqrt{3}}{8 \pi ^{3/2} (4-u)}g_3\left(-\dfrac{108 u}{(4-u)^3}\right)-\frac{\pi^2}{4(4-u)}g_1\left(-\dfrac{108 u}{(4-u)^3}\right).\end{split}\tag{\ref{eq:MB2}$'$}\label{eq:MB2'}
\end{align} We need two stages of asymptotic analysis to verify the identity above, which will described in the two paragraphs to follow.

As $u\to0^+$, we have \cite[cf.][Proposition 2.5]{Zhou2017BMdet}\begin{align}\begin{split}&
\int_{0}^\infty K_{0}(\sqrt{u}t)[I_{0}(t)]^{2} [K_0(t)]^2t\D t\\={}&\frac12\int_{0}^\infty K_{0}(\sqrt{u}t)I_{0}(t)K_0(t)\D t\\{}&+\int_{0}^\infty K_{0}(\sqrt{u}t)I_{0}(t)K_0(t)\left[ I_{0}(t)K_0(t)-\frac{1}{2t}\right]t\D t\\={}&\frac12\int_{0}^\infty K_{0}(\sqrt{u}t)I_{0}(t)K_0(t)\D t+O(\log u),\end{split}
\end{align}    where Bailey's integral formula \cite[cf.][(3.3)]{Bailey1936II} leads us to \begin{align}\begin{split}&\frac
12\int_{0}^\infty K_{0}(\sqrt{u}t)I_{0}(t)K_0(t)\D t\\={}&\frac{1}{2\sqrt{u}}\mathbf K\left( \sqrt{\frac{1-i\sqrt{(4-u)/u}}{2}} \right)\mathbf K\left( \sqrt{\frac{1+i\sqrt{(4-u)/u}}{2}}\right)\\={}&\frac{1}{32}\log^2\frac{4}{u}+O(\log u). \end{split}
\end{align}  So far, we know that [cf.~the last line in \eqref{eq:near0},  and the first equality in \eqref{eq:p4_Mellin_Barnes}]\begin{align}\begin{split}&
\int_{0}^\infty K_{0}(\sqrt{u}t)[I_{0}(t)]^{2} [K_0(t)]^2t\D t\\={}&\frac{\sqrt{3}}{8 \pi ^{3/2} (4-u)}g_3\left(-\dfrac{108 u}{(4-u)^3}\right)+\frac{A}{4-u}g_1\left(-\dfrac{108 u}{(4-u)^3}\right)+\frac{Bp_4(\sqrt{u})}{\sqrt u}\end{split}\label{eq:constAB}
\end{align}    for certain constants $A$ and $B$.

In the regime where $u\to4^-$, we have\begin{align}
\left\{\begin{array}{r@{\,=\,}l}\dfrac{1}{4-u}g_1\left(-\dfrac{108 u}{(4-u)^3}\right)&\dfrac{3 }{2^{14/3} \pi }\left[\dfrac{\Gamma \left(\frac{1}{3}\right)}{\sqrt{\pi }}\right]^6-\dfrac{\sqrt{4-u}}{2 \pi }+O(4-u),\\[12pt]\dfrac{p_4(\sqrt{u})}{\sqrt u}&\dfrac{3 \sqrt{3}}{2^{14/3} \pi ^2}\left[\dfrac{\Gamma \left(\frac{1}{3}\right)}{\sqrt{\pi }}\right]^6+O(4-u),\\[12pt]\dfrac{1}{4-u}g_3\left(-\dfrac{108 u}{(4-u)^3}\right)& \dfrac{\pi ^{5/2}}{2^{5/3} \sqrt{3}}\left[\dfrac{\Gamma \left(\frac{1}{3}\right)}{\sqrt{\pi }}\right]^6-\dfrac{\pi ^{5/2} \sqrt{4-u}}{\sqrt{3}}+O(4-u).\end{array}\right.
\label{eq:near4}\end{align}As we may recall, soon after the following evaluation \begin{align}
\int_{0}^\infty K_{0}(2t)[I_{0}(t)]^{2} [K_0(t)]^2t\D t=\frac{\pi }{2^{20/3}}\left[\dfrac{\Gamma \left(\frac{1}{3}\right)}{\sqrt{\pi }}\right]^6\label{eq:Broadhurst2008id}
\end{align} had been conjectured by Bailey--Borwein--Broadhurst--Glasser \cite[][(101)]{BBBG2008}, the same was verified by Broadhurst \cite{Broadhurst2008}.  Now that $ \int_{0}^\infty K_{0}(\sqrt{u}t)[I_{0}(t)]^{2} [K_0(t)]^2t\D t$ admits a Taylor expansion in a neighborhood of $u=4$, with its leading coefficient given by the right-hand side of \eqref{eq:Broadhurst2008id}, we must have $ A=-\frac{\pi^2}{4},B=0$  in \eqref{eq:constAB}, thereby proving \eqref{eq:MB2'}.     \end{proof}\begin{remark}For completeness, we give another proof of   \eqref{eq:Broadhurst2008id}, along with some generalizations. Our methods are largely  independent of those employed in \cite{Broadhurst2008}.

First, we note that the evaluation \begin{align}
\int_{0}^\infty I_{0}(2t)I_{0}(t)[K_0(t)]^3t\D t=\frac{3\pi }{2^{20/3}}\left[\dfrac{\Gamma \left(\frac{1}{3}\right)}{\sqrt{\pi }}\right]^6
\end{align} follows from \eqref{eq:IKM3F2} and the first line in \eqref{eq:near4}.

Then, for  $\ell\in\mathbb Z_{>0}$ and $\lambda,\mu\in(0,\infty)$, we consider a vanishing contour integral\begin{align}
\int_{-i\infty}^{i\infty}z[H_0^{(1)}(z)H_0^{(2)}(z)]^\ell H_0^{(1)}(\lambda z)H_0^{(1)}(\mu z)H_0^{(2)}((\lambda+\mu) z)\D z=0,
\end{align}where the contour closes to the right,  thanks to asymptotic expansions of the Hankel functions in the $ |z|\to\infty$ regime \cite[][\S7.2]{Watson1944Bessel}. Spelling out the Hankel functions along the imaginary axis in terms of modified Bessel functions, we arrive at a sum rule\begin{align}
&
i\int_{0}^\infty[K_0(t)]^\ell[\pi I_{0}(t)+iK_{0}(t)]^{\ell}[\pi I_{0}(\lambda t)+iK_{0}(\lambda t)][\pi I_{0}(\mu t)+iK_{0}(\mu t)]K_0((\lambda+\mu)t)t\D t\notag\\{}&-(-1)^{\ell}\int_{0}^\infty[K_0(t)]^{\ell}[\pi I_{0}(t)-iK_{0}(t)]^{\ell}K_0(\lambda t)K_0(\mu t)[\pi  I_{0}((\lambda+\mu)t)-iK_0((\lambda+\mu)t)]t\D t=0.\label{eq:ab_a+b_cancel}
\end{align} Setting $ \lambda=\mu=1$ in \eqref{eq:ab_a+b_cancel},  we obtain  a  cancelation formula that is valid for every $ \ell\in\mathbb Z_{>0}$:\begin{align}&
i\int_{0}^\infty[K_0(t)]^\ell[\pi I_{0}(t)+iK_{0}(t)]^{\ell+2}K_0(2t)t\D t\notag\\{}&-(-1)^{\ell}\int_{0}^\infty[K_0(t)]^{\ell+2}[\pi I_{0}(t)-iK_{0}(t)]^{\ell}[\pi  I_{0}(2t)-iK_0(2t)]t\D t=0.
\end{align}This incorporates \begin{align}
\int_{0}^\infty I_{0}(2t)I_{0}(t)[K_0(t)]^3t\D t=3\int_{0}^\infty K_{0}(2t)[I_{0}(t)]^{2} [K_0(t)]^2t\D t
\end{align} as a special case (real part for $\ell=1$).  \eor\end{remark}
  \subsection{Representations for $ \IKM(1,5;3)$}Towards our goal of proving \begin{align}
\int_0^\infty I_0(t)[K_0(t)]^5t(1-8t^{2})\D t={}&\frac{7 \pi ^3}{108 \sqrt{3}}\int_{0}^1\left[ _2F_1\left(\left.\begin{array}{c}
-\frac{1}{3},\frac{1}{3} \\[4pt]
1 \\
\end{array}  \right|x\right)\right]^2\frac{\D x}{\sqrt{1-x}},
\end{align}we begin with two lemmata concerning diagrams of sunrise type, namely,\begin{align}
\int_0^\infty I_0(t)[K_0(t)]^5t^{2m+1}\D t
\end{align}for $ m\in\{0,1,2\}$.\begin{lemma}[Alternative integral representations for $ \IKM(1,5;3)$ and $ \IKM(1,5;5)$]We have the following identities:\begin{align}
\int_0^\infty I_0(t)[K_0(t)]^5t^{3}\D t={}&\frac{\pi^2}{3}\int_0^\infty I_0(t)K_0(t)\left\{[I_{0}(t)]^{2} [K_0(t)]^2-\frac{1}{4t^{2}}\right\}t^{3}\D t,\label{eq:IKM333IKM153}\\\int_0^\infty I_0(t)[K_0(t)]^5t^{5}\D t={}&\frac{\pi^2}{3}\int_0^\infty I_0(t)K_0(t)\left\{[I_{0}(t)]^{2} [K_0(t)]^2-\frac{1}{4t^{2}}-\frac{1}{16 t^4}\right\}t^{5}\D t.\label{eq:IKM335IKM155}
\end{align}\end{lemma}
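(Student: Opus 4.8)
The plan is to derive both identities from one family of Bessel-moment sum rules, produced by the same Hankel-function contour integrals that feature in the Remark above. Consider
\[
\oint z^{2m+1}\bigl[H_0^{(1)}(z)\bigr]^3\bigl[H_0^{(2)}(z)\bigr]^3\D z=0,
\]
where the contour is the imaginary axis closed by a large semicircle in the right half-plane $\{\R z>0\}$, indented at the logarithmic branch point $z=0$; the enclosed region contains no singularity, so the integral vanishes. On the positive imaginary axis I would substitute the connection formulae $H_0^{(1)}(it)=-\tfrac{2i}{\pi}K_0(t)$ and $H_0^{(2)}(it)=2\bigl[I_0(t)+\tfrac{i}{\pi}K_0(t)\bigr]$ (with their conjugates on the negative imaginary axis), so that $[H_0^{(1)}(it)]^3[H_0^{(2)}(it)]^3$ becomes a fixed constant times $(\pi I_0+iK_0)^3K_0^3$. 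Because $(\pi I_0+iK_0)^3+(\pi I_0-iK_0)^3=2\pi I_0(\pi^2I_0^2-3K_0^2)$, the two halves of the imaginary axis combine, and the whole imaginary-axis integral equals a purely imaginary constant times the real integral $\int_0^\infty\bigl(\pi^2[I_0]^3[K_0]^3-3I_0[K_0]^5\bigr)t^{2m+1}\D t$. Each admissible $m$ thus relates $\IKM(1,5;2m+1)$ to the moment of $[I_0]^3[K_0]^3$.

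For $m=0$ the arc contributes nothing, since $[H_0^{(1)}H_0^{(2)}]^3\sim(2/\pi z)^3$ decays like $z^{-3}$, and the sum rule reduces to $\pi^2\IKM(3,3;1)=3\IKM(1,5;1)$, the prototype already used in the proof of the first Proposition (from \cite[][Lemma 3.1]{HB1}). For $m\in\{1,2\}$ the non-oscillatory tail $(2/\pi z)^3$ no longer decays fast enough against $z^{2m+1}$, and this is exactly what forces the subtractions in the statement. Using the large-$t$ expansion $I_0(t)K_0(t)=\tfrac1{2t}+\tfrac1{16t^3}+O(t^{-5})$, hence $[I_0(t)K_0(t)]^2=\tfrac1{4t^2}+\tfrac1{16t^4}+O(t^{-6})$, I would track the divergent tail integrated over the semicircle; on a right-half-plane semicircle this tail produces only powers $R^{2m-2k+1}$ with purely imaginary coefficients (never a constant $R^0$), so it carries no finite remainder. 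Consequently, letting $R\to\infty$, the divergent parts of the arc cancel those of the imaginary-axis integral, and the surviving \emph{finite} part of $\int_0^R(\pi^2[I_0]^3[K_0]^3-3I_0[K_0]^5)t^{2m+1}\D t$ is forced to vanish. Subtracting $\tfrac1{4t^2}$ (for $m=1$) and $\tfrac1{4t^2}+\tfrac1{16t^4}$ (for $m=2$) inside $I_0K_0\{\,\cdots\}t^{2m+1}$ renders the integrand $O(t^{-2})$ at infinity and represents precisely this finite part, yielding \eqref{eq:IKM333IKM153} and \eqref{eq:IKM335IKM155}.

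The main obstacle is the rigorous treatment of the arc together with the renormalization bookkeeping. Two points need care. First, besides the non-oscillatory diagonal term $\pi^3[I_0K_0]^3$, the product $(\pi I_0+iK_0)^3K_0^3$ carries subdominant pieces proportional to $e^{-2z}$, $e^{-4z}$, $e^{-6z}$; these decay in the open right half-plane but merely oscillate near the imaginary axis, so one must show they contribute nothing in the limit (by a Riemann--Lebesgue/stationary-phase estimate, or by tilting the rays slightly into $\{\R z>0\}$), and the indentation at $z=0$ must be checked to be harmless. Second, the contour naturally subtracts the pure-power asymptotic series of $[I_0K_0]^3$, whereas the statement subtracts $I_0K_0$ times the truncated expansion of $[I_0K_0]^2$; the difference of the two prescriptions is a convergent integral, which I would show vanishes by identifying it with the regularized Mellin values $\int_0^\infty t^{s-1}I_0(t)K_0(t)\D t$ at the even integers $s\in\{2,4\}$, whose closed form $\propto\Gamma(\tfrac s2)^2\Gamma(1-s)/\Gamma(1-\tfrac s2)^2$ vanishes there (the double pole of $\Gamma(1-\tfrac s2)^2$ in the denominator outweighs the simple pole of $\Gamma(1-s)$). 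Alternatively, one may perform the subtraction in the $I_0K_0$-weighted form from the start and re-examine the arc directly, circumventing this comparison at the cost of a slightly more delicate arc estimate.
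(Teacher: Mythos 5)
Your proposal is correct, and it runs on the same engine as the paper's own proof --- a vanishing Hankel-product contour integral over the imaginary axis closed in the right half-plane, with the $z=\pm it$ pairing converting $[H_0^{(1)}H_0^{(2)}]^3$ into $\pi^2[I_0K_0]^3-3I_0[K_0]^5$ --- but the two treatments of the subtraction terms, which are the entire new content of this lemma (your $m=0$ prototype is exactly the result quoted from \cite[][Lemma 3.1]{HB1}), differ genuinely. The paper never integrates the bare $z^{2m+1}[H_0^{(1)}H_0^{(2)}]^3$; it builds the holomorphic counterterm into the integrand, studying
\begin{align*}
\lim_{T\to\infty}\int_{-iT}^{iT}H_0^{(1)}(z)H_0^{(2)}(z)\left\{[H_0^{(1)}(z)H_0^{(2)}(z)]^{2}-\frac{4}{\pi^{2}z^2}\right\}z^3\D z=0
\end{align*}
for \eqref{eq:IKM333IKM153}, while the ``similar principle'' invoked for \eqref{eq:IKM335IKM155} amounts to replacing the braces by $[H_0^{(1)}H_0^{(2)}]^{2}-\frac{4}{\pi^{2}z^2}+\frac{1}{\pi^{2}z^4}$ and $z^3$ by $z^5$. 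Since $H_0^{(1)}(z)H_0^{(2)}(z)=\frac{2}{\pi z}-\frac{1}{4\pi z^{3}}+O(z^{-5})$ uniformly on $|\arg z|\le\frac{\pi}{2}$, those braces are $O(z^{-4})$ and $O(z^{-6})$ respectively, each integrand is $O(z^{-2})$, and the arc vanishes outright; moreover, because the counterterms multiply $H_0^{(1)}H_0^{(2)}$, whose imaginary part at $z=it$ is $-\frac{4}{\pi}I_0(t)K_0(t)$, the pairing delivers precisely the $I_0K_0$-weighted subtractions of the statement in one stroke: no divergent-power bookkeeping, no finite-part argument, no change of subtraction scheme. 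Your route pays for the bare integrand three times over --- tracking the divergent arc powers, showing the arc leaves no finite remainder, and converting the pure-power scheme (truncated series of $[I_0K_0]^3$) into the $I_0K_0$-weighted one --- and all three steps do hold up, modulo the uniform-asymptotics details you flag. In particular your Mellin lemma is right: $\int_0^\infty t^{s-1}I_0(t)K_0(t)\D t=2^{s-2}[\Gamma(\tfrac{s}{2})]^2\Gamma(1-s)/[\Gamma(1-\tfrac{s}{2})]^2$ for $0<\R s<1$, and its continuation vanishes at $s=2,4$; at $s=2$ this is even elementary, since $\int_0^T tI_0(t)K_0(t)\D t=\frac{T^2}{2}[I_0(T)K_0(T)+I_1(T)K_1(T)]=\frac{T}{2}-\frac{1}{16T}+O(T^{-3})$ gives $\int_0^\infty(tI_0K_0-\tfrac12)\D t=0$ directly. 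One simplification: your ``main obstacle'' (oscillatory $e^{-2z},e^{-4z},e^{-6z}$ pieces near the arc) dissolves if you refrain from decomposing into $I_0$ and $K_0$ off the axis and work with the product $H_0^{(1)}H_0^{(2)}$ itself, where the exponentials cancel identically between the two Hankel factors, so that the product admits a non-oscillatory asymptotic power series with uniform power-law error bounds on the closed half-plane (see \cite[][\S 13.75]{Watson1944Bessel}) --- the same classical fact that legitimizes the paper's one-line arc estimate. What your longer route buys is a conceptual explanation of why the subtractions must be truncated asymptotic expansions, plus the pleasant by-product $\int_0^\infty(tI_0K_0-\tfrac12)\D t=0$; what the paper's choice of integrand buys is a two-line proof.
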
\begin{proof}As a variation upon \cite[][(3.13)]{HB1}, we study a vanishing contour integral\begin{align}
\lim_{T\to\infty}\int_{-iT}^{iT}H_0^{(1)}(z)H_0^{(2)}(z)\left\{[H_0^{(1)}(z)H_0^{(2)}(z)]^{2}-\frac{4}{\pi^{2}z^2}\right\}z^3\D z=0,
\end{align}where $ H_0^{(1)}(z)$ and $H_0^{(2)}(z)$ are cylindrical Hankel functions. By pairing up the integrand at $ z=it$ and $z=-it$, and using the fact that\begin{align}
H_0^{(1)}(it)H_0^{(2)}(it)=\frac{4K_{0}(|t|)}{\pi^{2}}\left[ K_0(|t|)-\frac{\pi it}{|t|}I_0(|t|) \right],\quad \forall t\in(-\infty,0)\cup(0,\infty),
\end{align}we may reduce the vanishing contour integral into our claimed result in \eqref{eq:IKM333IKM153}.

The proof of \eqref{eq:IKM335IKM155} founds on a similar principle.\end{proof}
\begin{lemma}[A sum rule for Bessel moments]We have the following vanishing identity:\begin{align}
\int_0^\infty I_0(t)[K_0(t)]^5t(2-85t^{2}+72t^{4})\D t=0.\label{eq:IKM151IKM153IKM155}
\end{align}\end{lemma}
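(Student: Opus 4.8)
The plan is to realize the three Bessel moments $\IKM(1,5;1)$, $\IKM(1,5;3)$, $\IKM(1,5;5)$ as the leading Taylor data, at $u=1$, of a single function governed by the Picard--Fuchs operator of the $4$-loop sunrise, and then to read off the vanishing combination from that operator. Concretely, I would introduce the family $F_m(u):=\int_0^\infty I_0(\sqrt u\,t)[K_0(t)]^5 t^{2m+1}\D t$, which converges and is smooth for $u$ near $1$ and satisfies $F_m(1)=\IKM(1,5;2m+1)$. Writing $\theta_u:=u\,\frac{\D}{\D u}$ and using the elementary eigen-relation $4\theta_u^2 I_0(\sqrt u\,t)=u\,t^2 I_0(\sqrt u\,t)$ (a restatement of the modified Bessel equation for $I_0$), differentiation under the integral sign yields the ladder identity $u\,F_{m+1}(u)=4\theta_u^2 F_m(u)$. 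Iterating this once and twice expresses $F_1$ and $F_2$ as explicit second-order differential polynomials in $F_0$, so that, upon specialising at $u=1$, each of $\IKM(1,5;3)$ and $\IKM(1,5;5)$ becomes a fixed linear combination of $F_0(1),F_0'(1),\dots,F_0^{(4)}(1)$.

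The second step is to invoke Vanhove's fourth-order Picard--Fuchs operator for the $4$-loop (six-Bessel) sunrise --- the $n=5$ counterpart of the third-order operator quoted in \eqref{eq:VL3} --- which annihilates $F_0$ up to a source term regular at $u=1$. Evaluating this differential equation at $u=1$ produces one linear relation among $F_0(1),\dots,F_0^{(4)}(1)$. Substituting the ladder expressions for $\IKM(1,5;3)$ and $\IKM(1,5;5)$ obtained above, I expect the combination $2\,\IKM(1,5;1)-85\,\IKM(1,5;3)+72\,\IKM(1,5;5)$ to coincide, after clearing a common factor, with Vanhove's operator evaluated at $u=1$ applied to $F_0$; that is, the coefficient vector $(2,-85,72)$ should be precisely the one that collapses the five-term relation into the stated identity. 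The direct bookkeeping check $2\,\IKM(1,5;1)-85\,\IKM(1,5;3)+72\,\IKM(1,5;5)=2F_0-340F_0'+1964F_0''+4608F_0'''+1152F_0''''$ at $u=1$ pins down exactly which multiple of Vanhove's operator one must match, reducing the verification to comparing the numerical coefficients of the explicit operator at $u=1$.

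An alternative route, closer to the proofs of the two preceding lemmata, is to start from a single vanishing contour integral of the shape $\lim_{T\to\infty}\int_{-iT}^{iT}[H_0^{(1)}(z)H_0^{(2)}(z)]^3\,\{\,\cdots\,\}\,z\,(2-85z^2+72z^4)\,\D z=0$, with subtractions chosen to secure $O(z^{-2})$ decay, and then to pair the integrand at $z=\pm it$ using $H_0^{(1)}(it)H_0^{(2)}(it)=\frac{4K_0(t)}{\pi^2}[K_0(t)-\pi i I_0(t)]$; the odd (imaginary) part isolates the $\IKM(1,5;\cdot)$ moments together with the regularised $\IKM(3,3;\cdot)$ moments, and since these are proportional by \cite[][Lemma 3.1]{HB1} and the preceding lemma, the surviving relation is exactly the claimed sum rule. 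The main obstacle, in either route, is that $u=1$ is a pseudo-threshold of the fourth-order operator: one must verify that $F_0$ and the derivatives entering the ladder relations remain finite there and that the inhomogeneous source does not contribute, and one must confirm the coefficient match $(2,-85,72)$ against the explicit Vanhove operator while accounting for the boundary terms generated when transporting the $t$-side Bessel operator to the variable $u$.
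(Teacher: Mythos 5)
Your toolkit is the right one --- Vanhove's operator \eqref{eq:VL4}, the inhomogeneous equation $\widetilde L_4\int_0^\infty I_0(\sqrt u\,t)[K_0(t)]^5t\,\D t=-\frac{15}{2}$, and the ladder converting $u$-derivatives at $u=1$ into $t^2$-moments --- and your bookkeeping identity $2\,\IKM(1,5;1)-85\,\IKM(1,5;3)+72\,\IKM(1,5;5)=2F_0-340F_0'+1964F_0''+4608F_0'''+1152F_0''''$ at $u=1$ is correct. But the coefficient match you then ``expect'' is false, and your own bookkeeping already shows why. Since $u=1$ is a root of the leading coefficient $u^2(u-25)(u-9)(u-1)$, evaluating $\widetilde L_4F_0=-\frac{15}{2}$ at $u=1$ gives $-4F_0+104F_0'+896F_0''+384F_0'''=-\frac{15}{2}$: a relation with \emph{no} $F_0''''$ term at all (so it involves only $F_0(1),\dots,F_0'''(1)$, not $F_0^{(4)}(1)$ as you state) and with the constant source still present. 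It can never be proportional to your target vector $(2,-340,1964,4608,1152)$, whose $F_0''''$ entry is $1152\neq0$ and whose right-hand side must be $0$. The missing idea --- which is essentially the entirety of the paper's proof --- is to differentiate the equation once in $u$ \emph{before} specializing: because the source is constant, $\frac{\D}{\D u}\bigl[\widetilde L_4F_0\bigr]=0$ identically, and at $u=1$ the $F_0^{(5)}$ coefficient vanishes while the remaining coefficients come out to $(1,-170,982,2304,576)$ --- exactly half your target vector, and, crucially, equal to $\bigl[F_0-170\,BF_0+576\,B^2F_0\bigr]_{u=1}$ for $B=u\frac{\D^2}{\D u^2}+\frac{\D}{\D u}$, so that the relation closes on the three moments $\IKM(1,5;1)=F_0(1)$, $\IKM(1,5;3)=4BF_0|_{u=1}$, $\IKM(1,5;5)=16B^2F_0|_{u=1}$ (the $I_1$-type terms hiding in the individual derivatives cancel). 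Both indispensable features --- killing the source and landing in the span of $\{F_0,BF_0,B^2F_0\}$ --- are created by that single differentiation; neither holds without it.

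Your fallback contour-integral route is circular. A minor point first: under $z=it$ the weight you wrote becomes $2+85t^2+72t^4$ (the $z^2$ term flips sign), so one would have to start from $z(2+85z^2+72z^4)$. The substantive point is that pairing $z=\pm it$ in such a contour integral yields precisely the statement that $2\,\IKM(1,5;1)-85\,\IKM(1,5;3)+72\,\IKM(1,5;5)$ equals $\frac{\pi^2}{3}$ times the corresponding combination of the subtracted $\IKM(3,3;\cdot)$ integrals --- that is, exactly the $(2,-85,72)$-weighted sum of \cite[][Lemma 3.1]{HB1} and the two identities of the preceding lemma. Feeding those same lemmas back in, as you propose, produces $0=0$, not the sum rule. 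The vanishing is genuinely new information relative to the Hilbert-transform sum rules, and it has to come from the differential equation (or an equivalent arithmetic input); no rearrangement of those sum rules alone can yield it.
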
\begin{proof}In \cite[][\S6.2]{BBBG2008}, Bailey--Borwein--Broadhurst--Glasser reported that \eqref{eq:IKM151IKM153IKM155}  is correct up to 1200 decimal places. We now prove this sum rule using Vanhove's fourth-order differential operator \cite[][Table 1, $n=5$]{Vanhove2014Survey}\begin{align}\begin{split} \widetilde L_4:={}&
u^2(u-25) (u-9) (u-1) \frac{\D^4}{\D u^4}+2 u (5 u^3-140 u^2+777 u-450)\frac{\D^3}{\D u^3}\\{}&+(25 u^3-518 u^2+1839 u-450)\frac{\D^2}{\D u^2}\\{}&+(3 u-5) (5 u-57)\frac{\D}{\D u}+(u-5),\label{eq:VL4}
\end{split}\end{align}which satisfies $ \widetilde L_4\int_0^\infty I_0(\sqrt{u}t)K_0(t)[K_0(t)]^4t\D t=-\frac{15}{2}$  \cite[][Lemma 4.2]{Zhou2017BMdet}. Differentiating under the integral sign in the identity below, \begin{align}
\frac{\D}{\D u}\left\{ \widetilde L_4\int_0^\infty I_0(\sqrt{u}t)K_0(t)[K_0(t)]^4t\D t\right\}=0,
\end{align}before specializing to $u=1$, we arrive at $ \frac{1}{2}\int_0^\infty I_0(t)[K_0(t)]^5t(2-85t^{2}+72t^{4})\D t=0$, as claimed.\end{proof}\begin{remark}In \cite[][\S6.1]{BBBG2008}, Bailey--Borwein--Broadhurst--Glasser reported that $ \int_0^\infty [I_0(t)]^{2}[K_0(t)]^4t(2-85t^{2}+72t^{4})\D t=0$  is correct up to 1200 decimal places. This sum rule can be proved by a similar procedure as in the lemma above, namely, by considering\begin{align}
\frac{\D}{\D u} \left\{\widetilde L_4\int_0^\infty I_0(\sqrt{u}t)I_0(t)[K_0(t)]^4t\D t\right\}=0
\end{align}at $u=1$.  \eor\end{remark}

\begin{proposition}[Mellin--Barnes integrals for $ \IKM(1,5;2n+1),n\in\{0,1,2\}$]\label{prop:IKM15n_MB}\begin{enumerate}[leftmargin=*,  label=\emph{(\alph*)},ref=(\alph*),
widest=a, align=left]\item Setting\begin{align}
\varPhi(s):=\frac{\pi^{3}\Gamma \left(\frac{1}{3}-s\right)  \Gamma \left(\frac{2}{3}-s\right) \Gamma \left(s-\frac{1}{6}\right) \Gamma \left(s+\frac{1}{6}\right)}{72 \sqrt{3} [  \Gamma (1-s)]^2\left[\Gamma \left(s+\frac{1}{2}\right)\right]^2},
\end{align}we have{\allowdisplaybreaks\begin{align}\begin{split}&\int_0^\infty I_0(t)[K_0(t)]^5t\D t\equiv \IKM(1,5;1)\\={}&\frac{1}{2\pi i}\int_{\frac14-i\infty}^{\frac14+i\infty}\varPhi(s)\D s,\end{split}\label{eq:IKM151_MB}\\\begin{split}&\int_0^\infty I_0(t)[K_0(t)]^5t^{3}\D t\equiv \IKM(1,5;3)\\={}&\frac{1}{2\pi i}\int_{\frac14-i\infty}^{\frac14+i\infty}\varPhi(s)\left[\frac{1}{3 (5-6 s)}+\frac{1}{2 s+1}-\frac{2}{3}\right]\D s-\frac{2 \pi ^{5/2}}{27 \sqrt{3}}\left[\frac{\sqrt{\pi}}{\Gamma\left( \frac{1}{3} \right)}\right]^{9},\end{split}\label{eq:IKM153_MB}\\\begin{split}&\int_0^\infty I_0(t)[K_0(t)]^5t^{5}\D t\equiv \IKM(1,5;5)\\={}&\frac{1}{2\pi i}\int_{\frac14-i\infty}^{\frac14+i\infty}\varPhi(s)\left[\frac{25}{54 (7-6 s)}+\frac{43}{108 (5-6 s)}+\frac{23}{4 (2 s+1)}-\frac{45}{2 (2 s+3)}+\frac{68}{27} \right]\D s\\{}&-\frac{43 \pi ^{5/2} }{486 \sqrt{3}}\left[\frac{\sqrt{\pi }}{\Gamma \left(\frac{1}{3}\right)}\right]^9-\frac{5 \pi ^{5/2} }{331776}\left[\frac{\Gamma \left(\frac{1}{3}\right)}{\sqrt{\pi }}\right]^9.\end{split}\label{eq:IKM155_MB}
\end{align}}Moreover, we have  the following vanishing identity:\begin{align}\begin{split}0={}&
\frac{1}{2\pi i}\int_{\frac14-i\infty}^{\frac14+i\infty}\varPhi(s)\left[\frac{100}{3 (7-6 s)}+\frac{1}{3 (5-6 s)}+\frac{329}{2 s+1}-\frac{1620}{2 s+3}+240 \right]\D s\\&-\frac{5 \pi ^{5/2} }{4608}\left[\frac{\Gamma \left(\frac{1}{3}\right)}{\sqrt{\pi }}\right]^9-\frac{2 \pi ^{5/2}}{27 \sqrt{3} }\left[\frac{\sqrt{\pi }}{\Gamma \left(\frac{1}{3}\right)}\right]^9.\end{split}\label{eq:Phi_vanish}
\end{align}\item We have\begin{align}
\frac{1}{2\pi i}\int_{\frac14-i\infty}^{\frac14+i\infty}\varPhi(s)\left(\frac{1}{5-6s}-\frac{2}{1+2s}+1\right)\D s=\frac{2 \pi ^{5/2}}{9 \sqrt{3}}\left[\frac{\sqrt{\pi}}{\Gamma\left( \frac{1}{3} \right)}\right]^{9},
\end{align}which entails\begin{align}\int_0^
\infty I_0(t)[K_0(t)]^5t^{3}\D t=\frac{1}{2\pi i}\int_{\frac14-i\infty}^{\frac14+i\infty}\varPhi(s)\left[\frac{5}{6(5-6s)}-\frac{1}{6}\right]\D s-\frac{5 \pi ^{5/2}}{27 \sqrt{3}}\left[\frac{\sqrt{\pi}}{\Gamma\left( \frac{1}{3} \right)}\right]^{9}.
\end{align} \end{enumerate} \end{proposition}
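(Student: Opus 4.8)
The plan is to treat all three moments through a single off-shell integral. Set $\mathcal I(u):=\int_0^\infty I_0(\sqrt u\,t)[K_0(t)]^5 t\D t$, so that $\IKM(1,5;1)=\mathcal I(1)$. Because $I_0(\sqrt u\,t)$ solves Bessel's equation, the operator $\mathcal D:=4\frac{\D}{\D u}\bigl(u\frac{\D}{\D u}\bigr)$ obeys $\mathcal D\,I_0(\sqrt u\,t)=t^2 I_0(\sqrt u\,t)$, whence $\IKM(1,5;2k+1)=(\mathcal D^k\mathcal I)(1)$ for $k\in\{0,1,2\}$. Moreover $\mathcal I$ satisfies Vanhove's inhomogeneous equation $\widetilde L_4\mathcal I=-\frac{15}{2}$ [see \eqref{eq:VL4}], the relation already exploited in proving \eqref{eq:IKM151IKM153IKM155}. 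The strategy is therefore to first establish the base representation \eqref{eq:IKM151_MB}; then to propagate it to \eqref{eq:IKM153_MB}--\eqref{eq:IKM155_MB} by applying $\mathcal D$, the rational weights arising from $\mathcal D$ acting inside the Mellin--Barnes integrand and the explicit $\Gamma(\tfrac13)^{\pm9}$ constants arising from the inhomogeneity; and finally to read off \eqref{eq:Phi_vanish} from the sum rule. Throughout write $C:=\Gamma(\tfrac13)/\sqrt\pi$.

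For \eqref{eq:IKM151_MB} I would deliberately avoid routing through $\IKM(3,3;1)$: the Mellin--Barnes integrand for $\frac{\pi^2}{3}\IKM(3,3;1)$ obtained in the proof of the hypergeometric evaluation equals $\frac{3}{4\cos^2\pi s}\,\varPhi(s)$, the spurious factor $\sec^2\pi s$ being an artefact of the half-range average \eqref{eq:I0_add}. Instead I would use the product formula $[K_0(t)]^2=2\int_0^\infty K_0(2t\cosh\xi)\D\xi$. Writing $I_0[K_0]^5=(I_0[K_0]^3)[K_0]^2$ and putting $v=4\cosh^2\xi\ge4$ turns $\IKM(1,5;1)$ into the $\xi$-average of $\int_0^\infty K_0(\sqrt v\,t)I_0(t)[K_0(t)]^3 t\D t$, which is the continuation of \eqref{eq:IKM3F2} past $v=4$, i.e.\ the $G_{3,3}^{3,3}$ branch of the third-order equation isolated in the proof of \eqref{eq:MB1}. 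Interchanging the $\xi$- and $s$-integrations contributes the factor $\int_0^\infty(\cosh\xi)^{-2s}(\sinh\xi)^{6s-2}\D\xi=\frac{\Gamma(3s-\frac12)\Gamma(1-2s)}{2\,\Gamma(s+\frac12)}$, which converges exactly for $\frac16<\R s<\frac12$, so the contour $\R s=\frac14$ is admissible. Legendre's duplication formula for $\Gamma(1-2s)$ together with Gauss's triplication identity $\Gamma(s-\frac16)\Gamma(s+\frac16)\Gamma(s+\frac12)=2\pi\,3^{1-3s}\Gamma(3s-\frac12)$ then collapse the combined gamma product to $\varPhi(s)$, which is \eqref{eq:IKM151_MB}.

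To reach \eqref{eq:IKM153_MB} and \eqref{eq:IKM155_MB} I would apply $\mathcal D$ and $\mathcal D^2$ before taking the $\xi$-average, invoking the first lemma of this subsection --- \eqref{eq:IKM333IKM153} and \eqref{eq:IKM335IKM155} --- to replace the individually divergent $\IKM(3,3;2k+1)$ by the convergent regularized integrands $I_0K_0\bigl\{[I_0]^2[K_0]^2-\tfrac1{4t^2}-\cdots\bigr\}$. Each factor $t^2$ shifts the arguments of the $\sinh$--$\cosh$ beta factor by integers; resolving these shifts by partial fractions over the poles $s=\tfrac56,-\tfrac12,\tfrac76,-\tfrac32$ reproduces the rational weight $\tfrac1{3(5-6s)}+\tfrac1{2s+1}-\tfrac23$ of \eqref{eq:IKM153_MB} and the longer bracket of \eqref{eq:IKM155_MB}. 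The subtracted tails, equivalently the inhomogeneity $-\tfrac{15}{2}$ of $\widetilde L_4$, deposit the explicit constants; their coefficients are fixed by the boundary behaviour of $\mathcal I$ at $v=4$, where Broadhurst's evaluation \eqref{eq:Broadhurst2008id} and its companion $\int_0^\infty I_0(2t)I_0(t)[K_0(t)]^3 t\D t=\frac{3\pi}{2^{20/3}}C^6$ supply the requisite special values, together with \eqref{eq:IKM151_MB} itself. Finally \eqref{eq:Phi_vanish} needs no fresh input: substituting \eqref{eq:IKM151_MB}, \eqref{eq:IKM153_MB} and \eqref{eq:IKM155_MB} into $2\IKM(1,5;1)-85\IKM(1,5;3)+72\IKM(1,5;5)=0$ [from \eqref{eq:IKM151IKM153IKM155}] and collecting both the rational weights and the $C^{\pm9}$ constants returns precisely the bracket $\tfrac{100}{3(7-6s)}+\tfrac1{3(5-6s)}+\tfrac{329}{2s+1}-\tfrac{1620}{2s+3}+240$ and the two terms displayed there.

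Part (b) I would prove on its own by residue calculus on $\R s=\frac14$. The prefactor $\frac1{5-6s}-\frac2{1+2s}+1$ is designed so that $\varPhi(s)\bigl(\frac1{5-6s}-\frac2{1+2s}+1\bigr)$ is a first difference $\Psi(s)-\Psi(s+1)$ for a $\varPhi$-type kernel $\Psi$; shifting the contour by one unit then cancels both infinite residue families (the $s=\frac13+n$ and $s=\frac23+n$ towers that would otherwise sum to the two ${}_4F_3$'s), leaving the single bare term $\frac{2\pi^{5/2}}{9\sqrt3}C^{-9}$. Adding one half of this identity to \eqref{eq:IKM153_MB} then collapses its weight to $\frac5{6(5-6s)}-\frac16$ and its constant to $-\frac{5\pi^{5/2}}{27\sqrt3}C^{-9}$, which is the reduced representation. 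The principal difficulty is not the base case but the uniform bookkeeping in part (a): controlling the $v=4$ boundary contributions as $\mathcal D$ is iterated, so that the regularization constants emerge with exactly the stated rational coefficients; and exhibiting the telescoping kernel $\Psi$ in part (b) explicitly, rather than merely resumming the two ${}_4F_3$ towers.
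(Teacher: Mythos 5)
Your overall scaffolding does match the paper at two points --- the regularized representations \eqref{eq:IKM333IKM153}--\eqref{eq:IKM335IKM155} feed the higher moments, and \eqref{eq:Phi_vanish} is obtained by substituting \eqref{eq:IKM151_MB}--\eqref{eq:IKM155_MB} into the sum rule \eqref{eq:IKM151IKM153IKM155} --- but your base case \eqref{eq:IKM151_MB} rests on a false identification, and everything downstream inherits the defect. The function $h(v):=\int_0^\infty K_0(\sqrt v\,t)I_0(t)[K_0(t)]^3t\D t$, $v>4$, is \emph{not} ``the $G_{3,3}^{3,3}$ branch'' of the homogeneous third-order equation: Vanhove's operator \eqref{eq:VL3} annihilates only the tuned combinations on the left-hand sides of \eqref{eq:MB1}--\eqref{eq:MB2}, while the single moment $h$ obeys an \emph{inhomogeneous} equation. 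You can see this with no ODE input at all: rescaling $t\mapsto\tau/\sqrt v$ and using $K_0(\tau/\sqrt v)=\tfrac12\log v+\log(2/\tau)-\gamma+o(1)$ gives $h(v)=\frac{(\log v)^3}{8v}(1+o(1))$ as $v\to\infty$, whereas $v=\infty$ is a point of maximally unipotent monodromy for \eqref{eq:VL3} (indicial equation $(\rho-1)^3=0$), so every homogeneous solution is $O\bigl((\log v)^2/v\bigr)$ there. Independently of this, your gamma bookkeeping does not close: multiplying the $G_{3,3}^{3,3}$ integrand $\Gamma(\tfrac13-s)\Gamma(\tfrac12-s)\Gamma(\tfrac23-s)[\Gamma(s)]^3$ (argument $108v/(v-4)^3$, prefactor $1/(v-4)$) by your --- correct --- beta factor $\frac{\Gamma(3s-\frac12)\Gamma(1-2s)}{2\Gamma(s+\frac12)}$ and applying duplication and triplication does \emph{not} yield $\varPhi(s)$; the ratio of the resulting product to $\varPhi(s)$ is, up to an $s$-independent constant, $[\Gamma(\tfrac12-s)\Gamma(\tfrac12+s)]^2[\Gamma(s)\Gamma(1-s)]^3\propto\sec^2(\pi s)\csc^3(\pi s)$. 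So your route reproduces, in aggravated form, exactly the spurious reflection factors you set out to avoid, and does so on top of an unjustified premise.

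The rest is programmatic rather than proved. For \eqref{eq:IKM153_MB}--\eqref{eq:IKM155_MB} you conflate two different averages: the regularized integrands in \eqref{eq:IKM333IKM153}--\eqref{eq:IKM335IKM155} involve $[I_0(t)K_0(t)]^3$, which calls for the finite-range average \eqref{eq:K0_add} over $u\in(0,4)$ --- this is what the paper actually uses, together with \eqref{eq:MB2}, the Bessel operator $u\frac{\partial^2}{\partial u^2}+\frac{\partial}{\partial u}$, and carefully justified contour decompositions before invoking Fubini --- not your $\xi$-average of $[K_0(t)]^2$ over $v>4$; and ``the constants are fixed by the boundary behaviour at $v=4$'' is a hope, not a computation (in the paper those constants arise as residues of the rational prefactors, e.g.\ at $s=\tfrac56$, when contours are closed or displaced). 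For part (b) you assert that $\varPhi(s)\bigl(\frac{1}{5-6s}-\frac{2}{1+2s}+1\bigr)$ is a first difference $\Psi(s)-\Psi(s+1)$, but you never exhibit $\Psi$, and you yourself flag this as the principal difficulty; note also that the paper's part (b) is \emph{not} independent of part (a): it is deduced from \eqref{eq:Phi_vanish} by combining the reflection $\varPhi(s)=\varPhi(\tfrac12-s)$ with the gamma recursion on the residue contour $C_*$, and your sketch supplies no substitute for that mechanism. In short, the architecture is sensible and the product formula $[K_0(t)]^2=2\int_0^\infty K_0(2t\cosh\xi)\D\xi$ is an attractive idea, but the base case is wrong as argued, and the higher moments and part (b) remain unproven.
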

\begin{proof}\begin{enumerate}[leftmargin=*,  label=(\alph*),ref=(\alph*),
widest=a, align=left]\item One can verify \eqref{eq:IKM151_MB} by counting the residues at $ s=n+\frac{1}{3},n+\frac{2}{3}$ for $ n\in\mathbb Z_{\geq0}$, before comparing to the hypergeometric identity in \eqref{eq:IKM151_4F3_sum}. Here, contour closure is permissible, due to the leading asymptotic behavior\begin{align}
\varPhi(s)\sim\frac{\pi ^3 }{72 \sqrt{3} s^2}\left[\frac{3}{2 \cos (2 \pi  s)+1}-1\right],\quad s\to\infty.
\end{align}

We begin our treatment of \eqref{eq:IKM153_MB} with
 an analog of the Neumann addition formula in \eqref{eq:I0_add}, namely\begin{align}
I_{0}(t)K_0(t)=\frac{2}{\pi}\int_0^\pi K_0(2t\cos\theta)\D\theta,\label{eq:K0_add}
\end{align} as well as an integral formula $ \int_0^\infty K_{0}(\sqrt{u}t)t\D t=\frac{1}{u}$ for $u>0$ \cite[][\S13.21(8)]{Watson1944Bessel}, which lead us to\begin{align}\begin{split}&
\pi^2\int_0^\infty I_0(t)K_0(t)\left\{[I_{0}(t)]^{2} [K_0(t)]^2-\frac{1}{4t^{2}}\right\}t^{3}\D t\\={}&\pi\int_{0}^\infty\left(\int_0^4\left\{[I_{0}(t)]^{2} [K_0(t)]^2-\frac{1}{4t^{2}}\right\}\frac{K_{0}(\sqrt{u}t)\D u}{\sqrt{u(4-u)}}\right)t^{3}\D t\\={}&\pi\int_0^4\left\{\int_{0}^\infty K_{0}(\sqrt{u}t)[I_{0}(t)]^{2} [K_0(t)]^2t^{3}\D t-\frac{1}{4u}\right\}\frac{\D u}{\sqrt{u(4-u)}}.\end{split}\label{eq:IKM153prep}
\end{align}

Writing\begin{align}\begin{split}
\varphi(u,s):={}&\frac{\sqrt{3}}{8\pi ^{3/2}}\frac{\Gamma \left(\frac{1}{3}-s\right) \Gamma \left(\frac{1}{2}-s\right) \Gamma \left(\frac{2}{3}-s\right) \Gamma (s)}{4-u}\left[\frac{108u}{(4-u)^3}\right]^{-s}\left\{ [\Gamma(s)]^2 -\frac{\pi^2}{[\Gamma (1-s)]^2} \right\}\\={}&\frac{\sqrt{3 \pi }}{8}\frac{\cot ^2(\pi  s) \Gamma \left(\frac{1}{3}-s\right) \Gamma \left(\frac{1}{2}-s\right) \Gamma \left(\frac{2}{3}-s\right) \Gamma (s)}{(4-u)[ \Gamma (1-s)]^2}\left[\frac{108u}{(4-u)^3}\right]^{-s}\end{split}
\end{align}for $ u\in(0,4)$, while referring to  \eqref{eq:IvKM231_MB} and \eqref{eq:MB2}, we obtain\begin{align}\begin{split}&
\int_{0}^\infty K_{0}(\sqrt{u}t)[I_{0}(t)]^{2} [K_0(t)]^2t\D t=\frac{1}{2\pi i}\int_{\frac{1}{4}-i\infty}^{\frac{1}{4}+i\infty}\varphi(u,s)\D s\\={}&\frac{1}{2\pi i}\int_{\frac{1}{4}-i\infty}^{\frac{1}{4}+i\infty}\varphi(u,s)\left[1-\left( \frac{4-u}{4} \right)^{1-3s}\left(1-\frac{3 s-1}{4} u\right)\right]\D s\\{}&+\frac{1}{2\pi i}\int_{\frac{1}{4}-i\infty}^{\frac{1}{4}+i\infty}\varphi(u,s)\left( \frac{4-u}{4} \right)^{1-3s}\left(1-\frac{3 s-1}{4} u\right)\D s.
\end{split}\end{align}Shifting  contours while picking up residues, we arrive at a decomposition\begin{align}\begin{split}
&\int_{0}^\infty K_{0}(\sqrt{u}t)[I_{0}(t)]^{2} [K_0(t)]^2t\D t
\\={}&\frac{1}{2\pi i}\int_{\frac{5}{4}-i\infty}^{\frac{5}{4}+i\infty}\varphi(u,s)\left[1-\left( \frac{4-u}{4} \right)^{1-3s}\left(1-\frac{3 s-1}{4} u\right)\right]\D s\\{}&+\frac{1}{2\pi i}\int_{-\frac{3}{4}-i\infty}^{-\frac{3}{4}+i\infty}\varphi(u,s)\left( \frac{4-u}{4} \right)^{1-3s}\D s-\frac{1}{2\pi i}\int_{\frac{1}{4}-i\infty}^{\frac{1}{4}+i\infty}\varphi(u,s)\left( \frac{4-u}{4} \right)^{1-3s}\frac{3 s-1}{4} u\D s\\{}&+\frac{\log u}{32}\log \frac{u}{4096}+\frac{\pi^{2}}{96}+\frac{9 \log ^22}{8}.\end{split}\end{align}

Consequently,  in view of the Bessel differential equation  $ \left(u\frac{\partial^2}{\partial u^2}+\frac{\partial}{\partial u}\right)K_0(\sqrt{u}t)=\frac{t^2}{4}K_0(\sqrt{u}t)$, we have the following identity for $u\in(0,4) $:\begin{align}\begin{split}&
\int_{0}^\infty K_{0}(\sqrt{u}t)[I_{0}(t)]^{2} [K_0(t)]^2t^{3}\D t-\frac{1}{4u}\\={}&\frac{4}{2\pi i}\int_{\frac{5}{4}-i\infty}^{\frac{5}{4}+i\infty}\left(u\frac{\partial^2}{\partial u^2}+\frac{\partial}{\partial u}\right)\left\{\varphi(u,s)\left[1-\left( \frac{4-u}{4} \right)^{1-3s}\left(1-\frac{3 s-1}{4} u\right)\right]\right\}\D s\\{}&+\frac{4}{2\pi i}\int_{-\frac{3}{4}-i\infty}^{-\frac{3}{4}+i\infty}\left(u\frac{\partial^2}{\partial u^2}+\frac{\partial}{\partial u}\right)\left[\varphi(u,s)\left( \frac{4-u}{4} \right)^{1-3s}\right]\D s\\{}&-\frac{4}{2\pi i}\int_{\frac{1}{4}-i\infty}^{\frac{1}{4}+i\infty}\left(u\frac{\partial^2}{\partial u^2}+\frac{\partial}{\partial u}\right)\left[\varphi(u,s)\left( \frac{4-u}{4} \right)^{1-3s}\frac{3 s-1}{4} u\right]\D s.\end{split}
\end{align}  Now, we complete the integration over $u$ in \eqref{eq:IKM153prep}, by applying the explicit formula for $\varphi(u,s)$ to the equation above, and invoking the Fubini theorem for exchanging the order of integrations in absolutely convergent double integrals.  The result reads{\allowdisplaybreaks\begin{align}\begin{split}{}&
\int_0^4\left\{\int_{0}^\infty K_{0}(\sqrt{u}t)[I_{0}(t)]^{2} [K_0(t)]^2t^{3}\D t-\frac{1}{4u}\right\}\frac{\D u}{\sqrt{u(4-u)}}\\={}&-\frac{1}{2\pi i}\int_{\frac{5}{4}-i\infty}^{\frac{5}{4}+i\infty}\frac{\sqrt{3} \pi  \Gamma \left(\frac{1}{3}-s\right) \Gamma \left(\frac{2}{3}-s\right) [\Gamma (s)] ^3}{2 \Gamma \left(s-\frac{1}{2}\right) \Gamma \left(s+\frac{3}{2}\right)}\left\{- \frac{2^{2 (s-2)}}{3^{3 s}}\frac{3 s^3-5 s^2-s+1}{\Gamma (1-s)}+\right.\\{}&\left.+\frac{(2s-1)\pi \cos ( \pi  s)}{36 \cos (3 \pi  s)}\frac{1}{\Gamma \left(\frac{7}{6}-s\right) \Gamma \left(\frac{11}{6}-s\right) \Gamma (s)}\right\}\D s\\{}&+\frac{1}{2\pi i}\int_{-\frac{3}{4}-i\infty}^{-\frac{3}{4}+i\infty}\frac{ 2^{2 (s-2)}}{3^{3 s}}\frac{\sqrt{3}\pi  \Gamma \left(\frac{1}{3}-s\right) \Gamma \left(\frac{2}{3}-s\right) [\Gamma (s)]^3}{2 \Gamma (1-s) \Gamma \left(s+\frac{1}{2}\right) \Gamma \left(s+\frac{3}{2}\right)}s^{3}\D s\\{}&-\frac{1}{2\pi i}\int_{\frac{1}{4}-i\infty}^{\frac{1}{4}+i\infty}\frac{ 2^{2 (s-2)}}{3^{3 s}}\frac{\sqrt{3}\pi  \Gamma \left(\frac{1}{3}-s\right) \Gamma \left(\frac{2}{3}-s\right) [\Gamma (s)]^3}{4 \Gamma (1-s) \Gamma \left(s+\frac{1}{2}\right) \Gamma \left(s+\frac{3}{2}\right)}(s-1)^2 (6 s^2+s-1)\D s.\end{split}
\end{align}}We can shift the contour of the penultimate integral to $\R s=\frac14 $, without encountering any singularities on the way. This further allows us to combine the last two integrals, and turn the expression above into  \begin{align}\begin{split}
&-\frac{1}{2\pi i}\int_{\frac{5}{4}-i\infty}^{\frac{5}{4}+i\infty}\frac{\sqrt{3} \pi  \Gamma \left(\frac{1}{3}-s\right) \Gamma \left(\frac{2}{3}-s\right) [\Gamma (s)] ^3}{2 \Gamma \left(s-\frac{1}{2}\right) \Gamma \left(s+\frac{3}{2}\right)}\left\{- \frac{2^{2 (s-2)}}{3^{3 s}}\frac{3 s^3-5 s^2-s+1}{\Gamma (1-s)}+\right.\\{}&\left.+\frac{(2s-1)\pi \cos ( \pi  s)}{36 \cos (3 \pi  s)}\frac{1}{\Gamma \left(\frac{7}{6}-s\right) \Gamma \left(\frac{11}{6}-s\right) \Gamma (s)}\right\}\D s\\{}&-\frac{1}{2\pi i}\int_{\frac{1}{4}-i\infty}^{\frac{1}{4}+i\infty}\frac{\sqrt{3} \pi  \Gamma \left(\frac{1}{3}-s\right) \Gamma \left(\frac{2}{3}-s\right) [\Gamma (s)] ^3}{2 \Gamma \left(s-\frac{1}{2}\right) \Gamma \left(s+\frac{3}{2}\right)}\frac{2^{2 (s-2)}}{3^{3 s}}\frac{3 s^3-5 s^2-s+1}{\Gamma (1-s)}\D s.\end{split}\end{align}Counting residues at $s=\frac13 $ and $s=\frac23$ in the last integrand, we may further simplify our result into\begin{align}\begin{split}&\frac{3}{\pi}\int_0^\infty I_0(t)[K_0(t)]^5t^{3}\D t\\={}&
\pi\int_0^\infty I_0(t)K_0(t)\left\{[I_{0}(t)]^{2} [K_0(t)]^2-\frac{1}{4t^{2}}\right\}t^{3}\D t\\={}&\frac{1}{2\pi i}\int_{\frac{5}{4}-i\infty}^{\frac{5}{4}+i\infty}\frac{\pi ^2\cos (\pi  s) }{24 \sqrt{3}  \cos (3 \pi  s)}\frac{(1-2 s) \Gamma \left(\frac{1}{3}-s\right) \Gamma \left(\frac{2}{3}-s\right) [\Gamma (s)]^2}{ \Gamma \left(\frac{7}{6}-s\right) \Gamma \left(\frac{11}{6}-s\right) \Gamma \left(s-\frac{1}{2}\right) \Gamma \left(s+\frac{3}{2}\right)}\D s\\{}&+\frac{\pi ^{3/2} }{1920}\left[\frac{\Gamma \left(\frac{1}{3}\right)}{\sqrt{\pi }}\right]^9-\frac{8 \pi ^{3/2} }{21 \sqrt{3}}\left[\frac{\sqrt{\pi }}{\Gamma \left(\frac{1}{3}\right)}\right]^9.\end{split}
\end{align} Summing over residues at $ s=n+\frac{1}{3}$ and $ s=n+\frac{2}{3}$ for $ n\in\mathbb Z_{>0}$, we can evaluate  last formula as\begin{align}\begin{split}&
\frac{\pi ^{3/2}}{1920}\left[\frac{\Gamma \left(\frac{1}{3}\right)}{\sqrt{\pi }}\right]^9{_4F_3}\left(\left. \begin{array}{c}-\frac{1}{2},\frac{1}{6},\frac{1}{3},\frac{1}{3}\\[4pt]-\frac{1}{6},\frac{2}{3},\frac{11}{6}\end{array} \right|1\right)-\frac{8 \pi ^{3/2} }{21 \sqrt{3}}\left[\frac{\sqrt{\pi }}{\Gamma \left(\frac{1}{3}\right)}\right]^9{_4F_3}\left(\left. \begin{array}{c}-\frac{1}{6},\frac{1}{2},\frac{2}{3},\frac{2}{3}\\[4pt]\frac{1}{6},\frac{4}{3},\frac{13}{6}\end{array} \right|1\right)\\{}&-\frac{\pi ^{3/2}}{7040}\left[\frac{\Gamma \left(\frac{1}{3}\right)}{\sqrt{\pi }}\right]^9{_4F_3}\left(\left. \begin{array}{c}\frac{1}{2},\frac{7}{6},\frac{4}{3},\frac{4}{3}\\[4pt]\frac{5}{6},\frac{5}{3},\frac{17}{6}\end{array} \right|1\right)+\frac{16 \pi ^{3/2}}{91 \sqrt{3}}\left[\frac{\sqrt{\pi }}{\Gamma \left(\frac{1}{3}\right)}\right]^9{_4F_3}\left(\left. \begin{array}{c}\frac{5}{6},\frac{3}{2},\frac{5}{3},\frac{5}{3}\\[4pt]\frac{7}{6},\frac{7}{3},\frac{19}{6}\end{array} \right|1\right).
\end{split}\end{align}The same sum of hypergeometric series is also produced by the following expression:\begin{align}\frac{3}{\pi}
\left\{\frac{1}{2\pi i}\int_{\frac14-i\infty}^{\frac14+i\infty}\varPhi(s)\left[\frac{1}{3 (5-6 s)}+\frac{1}{2 s+1}-\frac{2}{3}\right]\D s-\frac{2 \pi ^{5/2}}{27 \sqrt{3}}\left[\frac{\sqrt{\pi}}{\Gamma\left( \frac{1}{3} \right)}\right]^{9}\right\},
\end{align} because the trailing constant cancels out the residue at $ s=\frac{5}{6}$, and  the series expansions agree, term by term, with  the residue contributions  at  the poles $ s=n+\frac{1}{3},n+\frac{2}{3}$ for $ n\in\mathbb Z_{\geq0}$.
Thus, we have confirmed \eqref{eq:IKM153_MB}.

With essentially the same set of ideas, we can use \eqref{eq:IKM335IKM155} to demonstrate  \eqref{eq:IKM155_MB}.

Transcribing \eqref{eq:IKM151IKM153IKM155} using    \eqref{eq:IKM151_MB}--\eqref{eq:IKM155_MB}, we arrive at \eqref{eq:Phi_vanish}.\item\label{itm:Cstar} To facilitate further analysis, we write $C_*$ for the union of infinitesimal clockwise circular contours centered at $ \left\{\left.n+\frac{1}{3}\right|n\in\mathbb Z_{\geq0}\right\}\cup\left\{\left.n+\frac{2}{3}\right|n\in\mathbb Z_{\geq0}\right\}$. This notation allows us to compress the right-hand sides of \eqref{eq:IKM153_MB}--\eqref{eq:Phi_vanish} into the form\begin{align}
\frac{1}{2\pi i}\int_{C_*}\varPhi(s)[\cdots]\D s,
\end{align}without the trailing constants.

Equipped with the reflection formula  $ \varPhi(s)=\varPhi\left( \frac{1}{2}-s \right)$ and the recursion for Euler's gamma function, we have \begin{align}\begin{split}&
\frac{1}{2\pi i}\int_{C_*}\varPhi(s)\left(\frac{1}{7-6 s}-\frac{1}{4}\right)\D s\\={}&\frac{1}{2\pi i}\int_{C_*}\varPhi(s)\left[\frac{6}{7-6 s}+\frac{1}{2 (5-6 s)}-\frac{3}{4 (1-s)}-\frac{1}{4}\right]\D s\end{split}
\end{align} upon a reflection $ s\mapsto \frac32-s$,  which subsequently rearranges to\begin{align}\begin{split}0={}&
\frac{1}{2\pi i}\int_{C_*}\varPhi(s)\left[\frac{5}{7-6 s}+\frac{1}{2 (5-6 s)}-\frac{3}{4 (1-s)}\right]\D s\\={}&\frac{1}{2\pi i}\int_{C_*}\varPhi(s)\left[\frac{5}{7-6 s}+\frac{1}{2 (5-6 s)}-\frac{3}{2 (1+2s)}\right]\D s.\label{eq:Phi_cancel1}\end{split}
\end{align}Here, in the last step, we have applied the reflection $ s\mapsto\frac{1}{2}-s$ to the last summand of the integrand. Likewise, by reflection and rearrangements, we obtain\begin{align}\begin{split}&
\frac{1}{2\pi i}\int_{C_*}\varPhi(s)\left(\frac{1}{2}-\frac{1}{1+2s}\right)\D s\\={}&\frac{1}{2\pi i}\int_{C_*}\varPhi(s)\left[ \frac{12}{7-6 s}+\frac{3}{7 (5-6 s)}-\frac{1}{1-s}-\frac{18}{7 (2-s)}+\frac{1}{2} \right]\D s\end{split}
\end{align}and its equivalent form\begin{align}
0=\frac{1}{2\pi i}\int_{C_*}\varPhi(s)\left[ \frac{12}{7-6 s}+\frac{3}{7 (5-6 s)}-\frac{1}{1+2s}-\frac{36}{7 (3+2s)} \right]\D s.\label{eq:Phi_cancel2}
\end{align}Using \eqref{eq:Phi_cancel1} and \eqref{eq:Phi_cancel2}, we can eliminate the terms related to $ \frac{1}{7-6s}$ and $ \frac{1}{3+2s}$ from  \eqref{eq:Phi_vanish}, which brings us \begin{align}
0=\frac{1}{2\pi i}\int_{C_*}\varPhi(s)\left(\frac{1}{5-6s}-\frac{2}{1+2s}+1\right)\D s.
\end{align}
Employing the equation above, we  rewrite  \eqref{eq:IKM153_MB} as\begin{align}
\int_0^\infty I_0(t)[K_0(t)]^5t^{3}\D t=\frac{1}{2\pi i}\int_{C_*}\varPhi(s)\left[\frac{5}{6(5-6s)}-\frac{1}{6}\right]\D s.\label{eq:IKM153_Cstar}
\end{align}

Replacing the contour $C_*$ by the vertical line running from $\frac14-i\infty $ to $\ \frac14+i\infty$, we can convert the last two displayed equations into the claimed identities. \qedhere\end{enumerate}\end{proof}\begin{remark}At an earlier stage of the current work, we attempted to retrieve \eqref{eq:IKM153_MB} from the ``finite part'' of the following divergent integral:\begin{align}
\text{``}\frac{4\pi}{3}\int_0^4\left[ \left(u\frac{\partial^2}{\partial u^2}+\frac{\partial}{\partial u}\right)\int_0^\infty I_0(\sqrt{u}t)I_0(t)[K_0(t)]^3t\D t \right]\frac{\D u}{\sqrt{u(4-u)}}\text{''}.
\end{align}Our previous ``renormalized'' calculations began with an expression for the indefinite integral\begin{align}\begin{split}&
\frac{4\pi}{3}\int \left(u\frac{\partial^2}{\partial u^2}+\frac{\partial}{\partial u}\right)\left\{\frac{1}{4-u}\left[\frac{u}{(4-u)^3}\right]^{-s}\right\}\frac{\D u}{\sqrt{u(4-u)}}\\={}&\frac{  4^{2 s-1}\pi}{3}  \left[s^2 \mathrm B_{\frac{u}{4}}\left(-s-\frac{1}{2},3 s-\frac{5}{2}\right)+(s-1) (4 s-1)\mathrm  B_{\frac{u}{4}}\left(\frac{1}{2}-s,3 s-\frac{5}{2}\right)\right.\\{}&\left.+(1-2 s)^2 \mathrm B_{\frac{u}{4}}\left(\frac{3}{2}-s,3 s-\frac{5}{2}\right)\right]\end{split}
\end{align}in terms of incomplete beta functions, which are analytic continuations of $\mathrm B_z(a,b):=\int_0^zt^{a-1}(1-t)^{b-1}\D t $ for $ \R a>1$. We then forcibly set $u=4$ in the indefinite integral, and referred back to the Mellin--Barnes representation for $ \int_0^\infty I_0(\sqrt{u}t)I_0(t)[K_0(t)]^3t\D t$ in \eqref{eq:IvKM231_MB}, before arriving at the integrand in  \eqref{eq:IKM153_MB}. Later afterwards, we found that such formal arguments can be turned to rigorous computations, with appropriate subtractions and contour shifts before invocations of the Fubini theorem, as described in the proof above.   \eor\end{remark}

  With the foregoing preparations, we can prove the integral identity announced in \eqref{eq:IKM151_IKM153_diff}.
\begin{proposition}[Meijer reduction of the Broadhurst--Laporta integral]\label{prop:IKM15n_Meijer}\begin{enumerate}[leftmargin=*,  label=\emph{(\alph*)},ref=(\alph*),
widest=a, align=left]\item We have \begin{align}\begin{split}{}&
\int_{0}^1\left[ _2F_1\left(\left.\begin{array}{c}
-\frac{1}{3},\frac{1}{3} \\[4pt]
1 \\
\end{array}  \right|x\right)\right]^2\frac{\D x}{\sqrt{1-x}}\\={}&G_{4,4}^{2,2}\left(1\left|
\begin{array}{c}
 -\frac{1}{2},\frac{1}{2},\frac{2}{3},\frac{4}{3} \\[4pt]
 0,1,-\frac{5}{6},-\frac{1}{6} \\
\end{array}
\right.\right)=-\frac{3}{4 \pi ^2}G_{4,4}^{2,4}\left(1\left|
\begin{array}{c}
 -\frac{1}{2},\frac{1}{2},\frac{2}{3},\frac{4}{3} \\[4pt]
 0,1,-\frac{5}{6},-\frac{1}{6} \\
\end{array}
\right.\right)\\={}&\frac{3}{\sqrt{\pi }}\left\{\frac{\sqrt{3} }{2^7}\left[\frac{\Gamma \left(\frac{1}{3}\right)}{\sqrt{\pi }}\right]^9\, _4F_3\left(\left. \begin{array}{c}-\frac{1}{2},\frac{1}{6},\frac{1}{3},\frac{4}{3}\\[4pt]-\frac{1}{6},\frac{5}{6},\frac{5}{3}\end{array} \right|1\right)+\frac{5}{7}\frac{2^4}{3}  \left[\frac{\sqrt{\pi }}{\Gamma \left(\frac{1}{3}\right)}\right]^9\, _4F_3\left(\left. \begin{array}{c}-\frac{7}{6},-\frac{1}{2},-\frac{1}{3},\frac{2}{3}\\[4pt]-\frac{5}{6},\frac{1}{6},\frac{1}{3}\end{array} \right|1\right) \right\}\\={}&\frac{6561}{3850}{_7F_6}\left(\left. \begin{array}{c}
-\frac{1}{3},\frac{1}{3},\frac{2}{3},\frac{4}{3},\frac{3}{2},\frac{3}{2},\frac{7}{4} \\[4pt]
\frac{3}{4},1,\frac{7}{6},\frac{11}{6},\frac{13}{6},\frac{17}{6} \\
\end{array} \right|1\right).\end{split}\label{eq:BroadhurstLaportaMeijer}
\end{align}\item The following identity holds:\begin{align}
\int_0^\infty I_0(t)[K_0(t)]^5t(1-8t^{2})\D t={}&\frac{7 \pi ^3}{108 \sqrt{3}}\int_{0}^1\left[ _2F_1\left(\left.\begin{array}{c}
-\frac{1}{3},\frac{1}{3} \\[4pt]
1 \\
\end{array}  \right|x\right)\right]^2\frac{\D x}{\sqrt{1-x}}.
\end{align}\end{enumerate}\end{proposition}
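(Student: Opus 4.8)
The plan is to obtain part~(a) as a close analogue of Proposition~\ref{prop:Laporta_Pnu_sqr} with a shifted degree, and then to deduce part~(b) by pairing the Mellin--Barnes descriptions of $\IKM(1,5;1)$ and $\IKM(1,5;3)$ supplied by Proposition~\ref{prop:IKM15n_MB}. For part~(a) I first substitute $x=1-t$, bringing the integral to $\int_0^1[{_2F_1}(-\tfrac13,\tfrac13;1;1-t)]^2t^{-1/2}\D t$, so the exponent is $\alpha=-\tfrac12$. Although ${_2F_1}(-\tfrac13,\tfrac13;1;1-t)$ is not of Legendre type, its Mellin transform is still an elementary ratio of gamma functions: the $c=1$ specialization of the Euler-type beta integral, of which \eqref{eq:Pnu_Mellin} is the $a+b=1$ instance, gives $\int_0^1{_2F_1}(-\tfrac13,\tfrac13;1;1-t)\,t^{s-1}\D t=s[\Gamma(s)]^2/[\Gamma(s+\tfrac23)\Gamma(s+\tfrac43)]$. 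Feeding this through the Mellin convolution exactly as before, and folding the linear prefactors into gamma functions via $s[\Gamma(s)]^2=\Gamma(s)\Gamma(s+1)$ and $(\tfrac12-s)[\Gamma(\tfrac12-s)]^2=\Gamma(\tfrac12-s)\Gamma(\tfrac32-s)$, I recognise the resulting Mellin--Barnes integrand as the defining integrand of the $G_{4,4}^{2,2}$ in \eqref{eq:BroadhurstLaportaMeijer}, which secures the first equality.

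The second equality $G_{4,4}^{2,2}=-\tfrac{3}{4\pi^2}G_{4,4}^{2,4}$ then proceeds as in Proposition~\ref{prop:Laporta_Pnu_sqr}: both $G$-functions solve a common fourth-order hypergeometric differential equation, whose solution space I would write out as a basis of three $_4F_3$'s together with $G_{4,4}^{2,4}$ itself, after which the proportionality constant (sign included) is fixed by matching the leading $z\to0^+$ asymptotics, the $G_{4,4}^{2,4}$ being distinguished by its $\log z$ behaviour. To convert $G_{4,4}^{2,4}$ into the pair of $_4F_3(1)$'s I apply residue calculus to its Mellin--Barnes integral, closing the contour once to the right and once to the left; the leftward closure isolates a $\psi^{(0)}$-laden series common to both closures, which cancels upon subtraction and leaves the clean $_4F_3$ combination displayed. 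The concluding $_7F_6$ evaluation follows by inserting this same $G_{4,4}^{2,4}$ into Bailey's identity \eqref{eq:Bailey7F6} under a suitable assignment of $(a,b,c,d,e,f)$, just as in the derivation of \eqref{eq:Bailey_redn}.

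For part~(b) I would read off $\IKM(1,5;1)=\tfrac{1}{2\pi i}\int_{C_*}\varPhi(s)\D s$ from \eqref{eq:IKM151_MB} (closing the vertical-line contour rightwards) and $\IKM(1,5;3)=\tfrac{1}{2\pi i}\int_{C_*}\varPhi(s)[\tfrac{5}{6(5-6s)}-\tfrac16]\D s$ from \eqref{eq:IKM153_Cstar}, both taken over the clockwise pole-cycles $C_*$ of Proposition~\ref{prop:IKM15n_MB}. Subtracting eight times the second from the first collapses the weight to a single rational factor,
\[
\IKM(1,5;1)-8\,\IKM(1,5;3)=\frac{1}{2\pi i}\int_{C_*}\varPhi(s)\,\frac{5-14s}{5-6s}\,\D s,
\]
the simplification $\tfrac73-\tfrac{20}{3(5-6s)}=\tfrac{5-14s}{5-6s}$ being immediate. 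Since $C_*$ encircles only the simple poles $s=n+\tfrac13$ and $s=n+\tfrac23$ (the extra pole at $s=\tfrac56$ introduced by the factor $5-6s$ lies outside $C_*$, and contour closure is legitimate because $\varPhi(s)\,\tfrac{5-14s}{5-6s}$ still decays like $s^{-2}$), summing residues produces exactly two hypergeometric series, one from each family of poles.

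The crux, and the step I expect to be most delicate, is to verify that these two residue series are precisely $\tfrac{7\pi^3}{108\sqrt3}$ times the two $_4F_3(1)$'s of part~(a): after simplifying the gamma quotients of $\varPhi$ by the reflection and recursion formulae and incorporating the rational weight $\tfrac{5-14s}{5-6s}$, the residue family at $s=n+\tfrac13$ must collapse to ${_4F_3}(-\tfrac12,\tfrac16,\tfrac13,\tfrac43;-\tfrac16,\tfrac56,\tfrac53;1)$ carrying the prefactor $[\Gamma(\tfrac13)/\sqrt\pi]^{9}$, and the family at $s=n+\tfrac23$ to the companion series ${_4F_3}(-\tfrac76,-\tfrac12,-\tfrac13,\tfrac23;-\tfrac56,\tfrac16,\tfrac13;1)$ carrying $[\sqrt\pi/\Gamma(\tfrac13)]^{9}$. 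This is a careful term-by-term matching of rising-factorial patterns and gamma-value normalizations rather than a conceptual obstacle; once it is in place the overall constant is pinned by a single leading term, and a convenient consistency check is that $\tfrac{7\pi^3}{108\sqrt3}\cdot\tfrac{3}{\sqrt\pi}=\tfrac{7\pi^{5/2}}{36\sqrt3}$ reproduces the prefactor recorded in \eqref{eq:IKM151_IKM153_diff}. With this identification, part~(b) follows at once, and the full chain of \eqref{eq:IKM151_IKM153_diff} linking $\IKM(1,5;1)-8\,\IKM(1,5;3)$ to the single integral and thence to the $_7F_6$ is complete.
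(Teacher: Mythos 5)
Part (a) of your proposal is correct and follows the paper's own route essentially step for step (Mellin transform plus Mellin convolution for the $G^{2,2}_{4,4}$, the fourth-order kernel-space comparison for the $G^{2,2}_{4,4}$--$G^{2,4}_{4,4}$ proportionality, two-sided residue calculus for the $_4F_3$ form, and Bailey's identity \eqref{eq:Bailey7F6} for the $_7F_6$); your only deviation is deriving the Mellin transform of ${_2F_1}(-\tfrac13,\tfrac13;1;1-t)$ from the general Euler integral rather than from the contiguity relation the paper uses, which is harmless. Part (b) also begins exactly as the paper does: the representation $\IKM(1,5;1)-8\,\IKM(1,5;3)=\tfrac{1}{2\pi i}\int_{C_*}\varPhi(s)\tfrac{5-14s}{5-6s}\,\D s$ is precisely the paper's intermediate identity. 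The genuine gap is the step you call the crux: the residue series of $\varPhi(s)\tfrac{5-14s}{5-6s}$ over the family $s=n+\tfrac13$ is \emph{not} proportional term by term to ${_4F_3}\bigl(-\tfrac12,\tfrac16,\tfrac13,\tfrac43;-\tfrac16,\tfrac56,\tfrac53;1\bigr)$, so the ``term-by-term matching'' you propose cannot be carried out. Indeed,
\begin{equation*}
\operatorname*{Res}_{s=n+\frac13}\varPhi(s)=-\frac{\pi^2}{144}\,\frac{\bigl[\Gamma\bigl(n+\tfrac13\bigr)\bigr]^2\,\Gamma\bigl(n+\tfrac16\bigr)\,\Gamma\bigl(n+\tfrac12\bigr)}{n!\,\Gamma\bigl(n+\tfrac23\bigr)\,\bigl[\Gamma\bigl(n+\tfrac56\bigr)\bigr]^2},
\qquad
\left.\frac{5-14s}{5-6s}\right|_{s=n+\frac13}=\frac{\frac13-14n}{3-6n},
\end{equation*}
so the $n$-th term $t_n$ of that family obeys
\begin{equation*}
\frac{t_{n+1}}{t_n}=\frac{\bigl(n+\tfrac{41}{42}\bigr)\bigl(n-\tfrac12\bigr)\bigl(n+\tfrac13\bigr)^2\bigl(n+\tfrac16\bigr)}{\bigl(n-\tfrac1{42}\bigr)(n+1)\bigl(n+\tfrac23\bigr)\bigl(n+\tfrac56\bigr)^2},
\end{equation*}
which equals $\tfrac{41}{50}$ at $n=0$, whereas the target $_4F_3$ has first term ratio $\tfrac{4}{25}$. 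Term ratios are prefactor-independent, so no normalization can reconcile the two series: the rational weight evaluated on the pole lattice injects the alien parameters $\mp\tfrac1{42}$, while the parameters $\tfrac43,\tfrac53,-\tfrac16$ of the target come from the gamma factors $\Gamma(s+1)$, $\Gamma(-\tfrac13-s)$, $\Gamma(\tfrac76-s)$ of the part-(a) Meijer integrand, which $\varPhi$ does not contain. Note also that the $n=0$ term of the second target $_4F_3$ corresponds to a residue at $s=-\tfrac13$, a point your contour $C_*$ never encircles.

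What is missing is the bridge the paper constructs. The paper identifies the two $_4F_3$'s termwise with the residues of the \emph{part-(a)} integrand over the larger contour $C_{**}$ (poles $n\pm\tfrac13$, including $-\tfrac13$), as in \eqref{eq:Cstarstar_int}, and only then passes to $\varPhi$ via the closed-form identity
\begin{equation*}
\frac{\Gamma\bigl(\tfrac32-s\bigr)\Gamma\bigl(\tfrac12-s\bigr)\Gamma\bigl(\tfrac13-s\bigr)\Gamma\bigl(-\tfrac13-s\bigr)\Gamma(s)\Gamma(s+1)}{\Gamma\bigl(\tfrac76-s\bigr)\Gamma\bigl(\tfrac{11}{6}-s\bigr)}
=\varPhi(s)\,\frac{72\sqrt3\,[1-2\cos(2\pi s)]}{7\pi\sin^2(2\pi s)}\left(\frac{10}{5-6s}+\frac{5}{1+3s}-7\right),
\end{equation*}
exploiting that the trigonometric factor takes one and the same value $\tfrac{192\sqrt3}{7\pi}$ at every pole inside $C_{**}$ (so it may be replaced by a constant in the residue sum), and finally the reflection $\varPhi(s)=\varPhi\bigl(\tfrac12-s\bigr)$, under which $2(1+3s)$ becomes $5-6s$, to convert the $C_{**}$ integral of $\varPhi(s)\bigl(\tfrac{10}{5-6s}+\tfrac{5}{1+3s}-7\bigr)$ into the $C_*$ integral of $\varPhi(s)\bigl(\tfrac{20}{5-6s}-7\bigr)$; this reflection is also what accounts for the stray pole at $-\tfrac13$. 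Without this identity, or an equivalent chain of contiguous-relation transformations converting your two $_5F_4$-type residue sums into the stated $_4F_3$'s, part (b) is not established.
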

\begin{proof}\begin{enumerate}[leftmargin=*,  label=(\alph*),ref=(\alph*),
widest=a, align=left]\item Using  a hypergeometric identity\begin{align}{_2F_1}\left(\left.\begin{array}{c}
-\nu,\nu\ \\[4pt]
1 \\
\end{array}  \right|1-t\right )=t\left( 1+\frac{1-t}{\nu}\frac{\D}{\D t} \right){_2F_1}\left(\left.\begin{array}{c}
-\nu,\nu+1\ \\[4pt]
1 \\
\end{array}  \right|1-t\right ),\end{align}we can deduce a Mellin transform formula\begin{align}
\int_0^1 {_2F_1}\left(\left.\begin{array}{c}
-\nu,\nu\ \\[4pt]
1 \\
\end{array}  \right|1-t\right )t^{s-1}\D t=\frac{\Gamma (s)\Gamma(s+1)}{\Gamma (s+1-\nu ) \Gamma (s+1+\nu )},\quad \R s>1.
\end{align}from \eqref{eq:Pnu_Mellin}. Consequently, Mellin convolution brings us  \begin{align}\begin{split}&
\int_0^1 \left[{_2F_1}\left(\left.\begin{array}{c}
-\nu,\nu\ \\[4pt]
1 \\
\end{array}  \right|1-t\right )\right]^2t^\alpha\D t\\={}&\frac{1}{2\pi i}\int_{\delta-i\infty}^{\delta+i\infty}\frac{\Gamma (\alpha +1-s)\Gamma (\alpha +2-s)\Gamma (s)\Gamma(s+1)\D s}{\Gamma (s+1-\nu ) \Gamma (s+\nu +1)\Gamma (\alpha +2- s-\nu ) \Gamma (\alpha +2-s+\nu)},\end{split}
\end{align}where $ \alpha\in(-1,\infty),\delta\in(0,\alpha+1)$. This incorporates the first equality in \eqref{eq:BroadhurstLaportaMeijer}, as a special case.

To prove the second equality in \eqref{eq:BroadhurstLaportaMeijer},  simply investigate the kernel space of the following differential operator:\begin{align}
z\left( z\frac{\D}{\D z}-\frac{1}{3} \right)\left( z\frac{\D}{\D z}+\frac{1}{3} \right)\left( z\frac{\D}{\D z}+\frac{1}{2} \right)\left( z\frac{\D}{\D z}+\frac{3}{2} \right)-\left(z\frac{\D}{\D z}-1\right)z\frac{\D}{\D z}\left( z\frac{\D}{\D z}+\frac{1}{6} \right)\left( z\frac{\D}{\D z}+\frac{5}{6} \right),
\end{align} in a similar fashion as its counterpart in Proposition \ref{prop:Laporta_Pnu_sqr}.

The third equality in \eqref{eq:BroadhurstLaportaMeijer} follows from residue calculus, as in the proof of the last equality in Proposition \ref{prop:Laporta_Pnu_sqr}.

To  prove the last equality in  \eqref{eq:BroadhurstLaportaMeijer},  simply set $ a= \frac{3}{2},b= \frac{3}{2},c= \frac{1}{3},d= -\frac{1}{3},e= \frac{4}{3},f= \frac{2}{3}$ in \eqref{eq:Bailey7F6}. (We note that there are actually 48 different choices of $ a,b,c,d,e,f$ in  Bailey's identity that fit the special $ G^{2,4}_{4,4}$ in question, producing four different $ _7F_6$ forms in total.  The other three expressions \begin{align}
\frac{81}{50}&{_7F_6}\left(\left. \begin{array}{c}
-\frac{1}{3},-\frac{1}{3},\frac{1}{3},\frac{1}{3},\frac{1}{2},\frac{1}{2},\frac{5}{4} \\[4pt]
\frac{1}{4},1,\frac{7}{6},\frac{7}{6},\frac{11}{6},\frac{11}{6} \\
\end{array} \right|1\right) ,\label{eq:vp7F6'}
\\\frac{567}{1375}\frac{\sqrt{3}}{2^{8/3}}\left[\frac{\Gamma \left(\frac{1}{3}\right)}{\sqrt{\pi }}\right]^6&{_7F_6}\left(\left. \begin{array}{c}
\frac{1}{3},\frac{1}{2},\frac{3}{2},\frac{3}{2},\frac{13}{6},\frac{13}{6},\frac{7}{3} \\[4pt]
\frac{7}{6},\frac{7}{6},\frac{11}{6},\frac{11}{6},\frac{17}{6},3 \\
\end{array} \right|1\right) ,\\\frac{27}{7}2^{5/3} \sqrt{3}\left[\frac{\sqrt{\pi }}{\Gamma \left(\frac{1}{3}\right)}\right]^6&{_5F_4}\left(\left. \begin{array}{c}
-\frac{1}{3},\frac{1}{2},\frac{3}{2},\frac{3}{2},\frac{5}{3} \\[4pt]
\frac{7}{6},\frac{7}{6},\frac{13}{6},3 \\
\end{array} \right|1\right)\label{eq:5F4_redn_a}\end{align}are all equal to the last line in  \eqref{eq:BroadhurstLaportaMeijer}, even though they had not been previously reported by Laporta or Broadhurst.)  \item Similar to what we did in the proof of Proposition \ref{prop:IKM15n_MB}\ref{itm:Cstar}, we now introduce another notation  $C_{**}$ for the union of infinitesimal clockwise circular contours centered at $ \left\{\left.n+\frac{1}{3}\right|n\in\mathbb Z_{\geq0}\right\}\cup\left\{\left.n-\frac{1}{3}\right|n\in\mathbb Z_{\geq0}\right\}$. By residue calculus, we can readily verify that \begin{align}\begin{split}&
\frac{3}{\sqrt{\pi }}\left\{\frac{\sqrt{3} }{2^7}\left[\frac{\Gamma \left(\frac{1}{3}\right)}{\sqrt{\pi }}\right]^9\, _4F_3\left(\left. \begin{array}{c}-\frac{1}{2},\frac{1}{6},\frac{1}{3},\frac{4}{3}\\[4pt]-\frac{1}{6},\frac{5}{6},\frac{5}{3}\end{array} \right|1\right)+\frac{5}{7}\frac{2^4}{3}  \left[\frac{\sqrt{\pi }}{\Gamma \left(\frac{1}{3}\right)}\right]^9\, _4F_3\left(\left. \begin{array}{c}-\frac{7}{6},-\frac{1}{2},-\frac{1}{3},\frac{2}{3}\\[4pt]-\frac{5}{6},\frac{1}{6},\frac{1}{3}\end{array} \right|1\right) \right\}\\={}&-\frac{3}{16 \pi ^2}\frac{1}{2\pi i}\int_{C_{**}}\frac{\Gamma \left(\frac{3}{2}-s\right) \Gamma \left(\frac{1}{2}-s\right) \Gamma \left(\frac{1}{3}-s\right) \Gamma \left(-\frac{1}{3}-s\right) \Gamma (s) \Gamma (s+1)}{\Gamma \left(\frac{7}{6}-s\right) \Gamma \left(\frac{11}{6}-s\right)}\D s.\end{split}\label{eq:Cstarstar_int}
\end{align}Since \begin{align}\begin{split}&
\frac{\Gamma \left(\frac{3}{2}-s\right) \Gamma \left(\frac{1}{2}-s\right) \Gamma \left(\frac{1}{3}-s\right) \Gamma \left(-s-\frac{1}{3}\right) \Gamma (s) \Gamma (s+1)}{\Gamma \left(\frac{7}{6}-s\right) \Gamma \left(\frac{11}{6}-s\right)}\\={}&\varPhi(s)\frac{72 \sqrt{3}  [1-2 \cos (2 \pi  s)] }{7\pi\sin ^2(2 \pi  s)}\left(\frac{10}{5-6 s}+\frac{5}{1+3 s}-7\right),
\end{split}\end{align}we can convert \eqref{eq:Cstarstar_int} into \begin{align}\begin{split}&
\int_{0}^1\left[ _2F_1\left(\left.\begin{array}{c}
-\frac{1}{3},\frac{1}{3} \\[4pt]
1 \\
\end{array}  \right|x\right)\right]^2\frac{\D x}{\sqrt{1-x}}\\={}&-\frac{36 \sqrt{3}}{7\pi ^3}\frac{1}{2\pi i}\int_{C_{**}}\varPhi(s)\left(\frac{10}{5-6 s}+\frac{5}{1+3 s}-7\right)\D s\\={}&-\frac{36 \sqrt{3}}{7\pi ^3}\frac{1}{2\pi i}\int_{C_{*}}\varPhi(s)\left(\frac{20}{5-6 s}-7\right)\D s.\end{split}
\end{align}In the last step, we note that $2(1+3s)$ becomes $5-6s $ as we trade $s$ for $\frac12-s$. Meanwhile, according to \eqref{eq:IKM151_MB} and \eqref{eq:IKM153_Cstar}, we have\begin{align}
\int_0^\infty I_0(t)[K_0(t)]^5t(1-8t^{2})\D t=-\frac{1}{3}\frac{1}{2\pi i}\int_{C_{*}}\varPhi(s)\left(\frac{20}{5-6 s}-7\right)\D s.
\end{align}Pairing up the last two displayed equations, we arrive at our destination.   \qedhere\end{enumerate}\begin{remark}
As we set  $a_0=n+\frac12,a_1=\frac{2}{3},a_2=n+\frac{1}{3} ,a_3=\frac{1}{3},a_4=n+\frac{1}{2},a_5=\frac{2}{3}-n$  in Zudilin's integral formula  \eqref{eq:Zudilin7F6}, we obtain\begin{align}\begin{split}&
\frac{2^{11/3} \pi ^{7/2}}{\left[\Gamma \left(\frac{1}{3}\right)\right]^4}\frac{\left[\Gamma \left(n+\frac{1}{3}\right)\right]^2 \Gamma \left(n+\frac{1}{2}\right) \Gamma \left(n+\frac{3}{2}\right)}{\Gamma \left(n+\frac{5}{6}\right) \Gamma \left(n+\frac{7}{6}\right) \Gamma \left(2 n+\frac{5}{6}\right)}{_7F_6}\left(\left. \begin{array}{c}
\frac{1}{3},\frac{2}{3},\frac{2}{3}-n,\frac{n}{2}+\frac{5}{4},n+\frac{1}{3},n+\frac{1}{2},n+\frac{1}{2} \\[4pt]
1,\frac{7}{6},\frac{n}{2}+\frac{1}{4},n+\frac{5}{6},n+\frac{7}{6},2 n+\frac{5}{6} \\
\end{array} \right|1\right) \\={}&\int_0^1 \int_0^1 \int_0^1\frac{[xz(1-z)]^{n}}{     \{1 - x [1 - y (1 - z)]\}^{2/3}}\frac{\D  x}{x^{2/3}\sqrt[6]{1-x}}\frac{\D y}{y^{2/3}\sqrt[3]{1-y} }\frac{\D z}{\sqrt{z}(1-z)^{2/3}}.\end{split}
\end{align}For $ n=0$ and $n=1$, we have just proved that the expression above  evaluates to \begin{align}2^{4/3}\sqrt{3}\frac{24\IKM(1,5;1)}{\pi}
\text{ and }2^{4/3}\sqrt{3}\frac{32\IKM(1,5;1)-256\IKM(1,5;3)}{21\pi},
\end{align}respectively. Numerically, we have also found that for small positive integers $n$, the last triple integral can be written in the following form:\begin{align}
2^{4/3}\sqrt{3}\frac{a_{n}\IKM(1,5;1)+b_n\IKM(1,5;3)}{\pi},\quad \text{where }a_n,b_n\in\mathbb Q.
\end{align} For example,\begin{align}
a_{2}=\frac{5359616}{24508575},\quad b_2=-\frac{47263744}{24508575}.
\end{align} Since the denominators of the rational numbers $a_n,b_n$ grow far too impetuously, we cannot use the formulations above to draw any definitive conclusion about the arithmetic nature for $ \IKM(1,5;1)$ or $ \IKM(1,5;3)$. We hope that some experts in Diophantine approximation will refine such identities in the future.
\eor\end{remark} \end{proof}
\subsection{Representations for $ \IKM(2,4;1)$ and $\IKM(2,4;3)$}
In Laporta's calculation of 4-loop contribution to electron's $g-2$ \cite[][(27)]{Laporta:2017okg}, the final result  did not involve the following  Feynman diagram with two pairs of  external legs\begin{align}
\;\;\;\;\;
\dia{\put(-50,0){\line(-1,-1){50}}
\put(-50,0){\line(-1,1){50}}
\put(50,0){\line(1,-1){50}}
\put(50,0){\line(1,1){50}}
\put(0,15){\circle{100}}
\put(0,-15){\circle{100}}
\put(50,0){\vtx}
\put(-50,0){\vtx}
}{}\;\;\;=2^{3}\int_0^\infty [I_0(t)]^{2}[K_0(t)]^4t\D t\equiv2^3\IKM(2,4;1),
\end{align}but this diagram did appear in the $ \varepsilon$-expansion of master integrals.

In the next two propositions, we will verify the following integral identity\begin{align}
\int_0^\infty [I_0(t)]^{2}[K_0(t)]^4t\D t=\frac{\pi^{2}}{30}\int_0^1{_2F_1}\left(\left.\begin{array}{c}
\frac{1}{3},\frac{2}{3}\ \\[4pt]
1 \\
\end{array}  \right|x\right){_2F_1}\left(\left.\begin{array}{c}
\frac{1}{3},\frac{2}{3}\ \\[4pt]
1 \\
\end{array}  \right|1-x\right)\frac{\D x}{\sqrt{1-x}}
\end{align}by turning both sides into special values of generalized hypergeometric series.
\begin{proposition}[Broadhurst--Laporta representations]\label{prop:IKM241_4F3}We have \begin{align}\begin{split}
\;\;\;\;\;
\dia{\put(-50,0){\line(-1,-1){50}}
\put(-50,0){\line(-1,1){50}}
\put(50,0){\line(1,-1){50}}
\put(50,0){\line(1,1){50}}
\put(0,15){\circle{100}}
\put(0,-15){\circle{100}}
\put(50,0){\vtx}
\put(-50,0){\vtx}
}{}\;\;\;={}&2^{3}\int_0^\infty [I_0(t)]^{2}[K_0(t)]^4t\D t\\={}&\frac{6 \pi ^{3/2}}{5}\left\{ \frac{\sqrt{3} }{2^6 }\left[\frac{\Gamma \left(\frac{1}{3}\right)}{\sqrt{\pi}}\right]^9\, _4F_3\left(\left. \begin{array}{c}\frac{1}{6},\frac{1}{3},\frac{1}{3},\frac{1}{2}\\[4pt]\frac{2}{3},\frac{5}{6},\frac{5}{6}\end{array} \right|1\right)+\frac{2^{4}}{3}\left[\frac{\sqrt{\pi}}{\Gamma \left(\frac{1}{3}\right)}\right]^9\, _4F_3\left(\left. \begin{array}{c}\frac{1}{2},\frac{2}{3},\frac{2}{3},\frac{5}{6}\\[4pt]\frac{7}{6},\frac{7}{6},\frac{4}{3}\end{array} \right|1\right) \right\}\\={}&\frac{4\pi^{2}}{5}{_4F_3}\left(\left.\begin{array}{c}
\frac{1}{3},\frac{1}{2},\frac{1}{2},\frac{2}{3} \\[4pt]\frac{5}{6},1,\frac{7}{6} \\
\end{array}\right|1\right),\end{split}\label{eq:IKM241_4F3}
\end{align}as indicated by Laporta \cite[][(27)]{Laporta:2017okg}  and Broadhurst (see \cite[][\S2.2]{Broadhurst2017Paris}, \cite[][\S2.2]{Broadhurst2017CIRM}, \cite[][\S2.1]{Broadhurst2017Higgs}, \cite[][\S3.1]{Broadhurst2017DESY}, \cite[][\S3.1]{Broadhurst2017ESIa}).
\end{proposition}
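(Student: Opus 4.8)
The plan is to manufacture a Meijer-$G$ representation of $\IKM(2,4;1)$ out of the lemma's identity \eqref{eq:MB1} by averaging it against the Neumann kernel, and then to extract both hypergeometric forms by residue calculus, in the spirit of Proposition~\ref{prop:Laporta_Pnu_sqr}. Concretely, I would put $\sqrt u=2\cos\theta$ in \eqref{eq:MB1} and integrate over $\theta\in\left(0,\frac\pi2\right)$ against $\frac4\pi\D\theta$. On the left, the doubled form $[I_0(t)]^2=\frac4\pi\int_0^{\pi/2}I_0(2t\cos\theta)\D\theta$ of \eqref{eq:I0_add} converts the first Bessel integral into $\IKM(2,4;1)$, while the companion evaluation $\int_0^{\pi/2}K_0(2t\cos\theta)\D\theta=\frac\pi2I_0(t)K_0(t)$ underlying \eqref{eq:K0_add} converts the second into $8\IKM(2,4;1)$; hence the left-hand side collapses to $9\IKM(2,4;1)$.

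On the right, using $4-u=4\sin^2\theta$, the $\theta$-integration becomes the Euler beta integral $\int_0^{\pi/2}(\cos\theta)^{-2s}(\sin\theta)^{6s-2}\D\theta=\frac{\Gamma\left(3s-\frac12\right)\Gamma\left(\frac12-s\right)}{2\,\Gamma(2s)}$, and all residual powers of $2$ and $3$ cancel once the Legendre--Gauss duplication and triplication formulae are applied to $\Gamma(2s)$ and $\Gamma\left(3s-\frac12\right)$. This yields the clean intermediate identity
\begin{align*}
\IKM(2,4;1)=\frac{1}{72\sqrt3}\,\frac{1}{2\pi i}\int_{\frac14-i\infty}^{\frac14+i\infty}\frac{\Gamma\left(\frac13-s\right)\Gamma\left(\frac12-s\right)\Gamma\left(\frac23-s\right)\Gamma\left(s-\frac16\right)\Gamma\left(s+\frac16\right)\Gamma(s)}{\Gamma(1-s)\,\Gamma\left(s+\frac12\right)}\,\D s,
\end{align*}
whose integrand is the Mellin--Barnes kernel of a $G_{4,4}^{3,3}$ carrying exactly the parameter multisets $\left\{\frac13,\frac12,\frac12,\frac23\right\}$ and $\left\{0,0,-\frac16,\frac16\right\}$ of the functions $f_1,\dots,f_4$ in the proof of Proposition~\ref{prop:Laporta_Pnu_sqr}. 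Thus it is annihilated by the same fourth-order operator; and because its poles are all simple, it carries no logarithm and is a combination of $f_1,f_2,f_3$ alone.

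I would then close the contour rightward. The residue families at $s=n+\frac13$, $s=n+\frac12$ and $s=n+\frac23$ have term-ratios matching $f_1(1)={}_4F_3\!\left(\frac16,\frac13,\frac13,\frac12;\frac23,\frac56,\frac56;1\right)$, $f_3(1)={}_4F_3\!\left(\frac13,\frac12,\frac12,\frac23;\frac56,1,\frac76;1\right)$ and $f_2(1)={}_4F_3\!\left(\frac12,\frac23,\frac23,\frac56;\frac76,\frac76,\frac43;1\right)$ respectively, so the sum is an explicit three-term combination of $f_1(1),f_3(1),f_2(1)$ with the usual $\left[\Gamma\left(\frac13\right)/\sqrt\pi\right]^{\pm9}$ constants. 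To collapse this into the two stated forms I would invoke the local structure at the regular singular point $z=1$: the Fuchs relation forces the exponents there to be $\{0,1,1,2\}$, so the only local solution not vanishing at $z=1$ is the exponent-$0$ one and the evaluation functional $f\mapsto f(1)$ has rank one on the solution space. Consequently $f_1(1)$, $f_2(1)$ and $f_3(1)$ are mutually proportional; the three-term combination therefore collapses to the single $\frac{4\pi^2}5 f_3(1)$, and re-expressing the same common value through the $f_1(1),f_2(1)$ pair reproduces the $\frac{6\pi^{3/2}}5\{\cdots\}$ form.

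The hard part will be the bookkeeping that fixes the explicit residue constants $a_1,a_2,a_3$ and the proportionality ratios among $f_1(1),f_2(1),f_3(1)$, so that the collapse delivers precisely the prefactors $\frac{6\pi^{3/2}}5$ and $\frac{4\pi^2}5$. I expect to settle these by matching leading $z\to0^+$ coefficients, exactly as the two $G$-functions were matched in Proposition~\ref{prop:Laporta_Pnu_sqr}, and by cross-checking the resulting ${}_4F_3$ reductions against Bailey's identity \eqref{eq:Bailey7F6}.
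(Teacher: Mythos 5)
Your skeleton is in fact the paper's own route to Proposition~\ref{prop:IKM241_4F3} --- average \eqref{eq:MB1} over $\sqrt{u}=2\cos\theta$ with the two addition formulas, reduce the angular integral by Euler's beta integral plus the Legendre--Gauss duplication/triplication formulas, and then extract ${}_4F_3$ values by residues --- but your bookkeeping of the left-hand side is wrong, and here the exact rational prefactors are the whole point. The doubled Neumann formula you use for $I_0$ is false: correctly, $[I_0(t)]^2=\frac{2}{\pi}\int_0^{\pi/2}I_0(2t\cos\theta)\,\D\theta$ (set $t=0$: your version gives $2$ on the right, not $1$), whereas your Nicholson evaluation $\int_0^{\pi/2}K_0(2t\cos\theta)\,\D\theta=\frac{\pi}{2}I_0(t)K_0(t)$ \emph{is} correct. (As printed, \eqref{eq:I0_add} and \eqref{eq:K0_add} both carry the same upper-limit typo, $\pi$ in place of $\pi/2$; you copied the typo for $I_0$ and silently corrected it for $K_0$, which is how the mismatch arose.) With the consistent correct pair, your $\frac{4}{\pi}\,\D\theta$-average of the left side of \eqref{eq:MB1} is $2\IKM(2,4;1)+4\cdot 2\IKM(2,4;1)=10\,\IKM(2,4;1)$, not $9\,\IKM(2,4;1)$. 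Your reduction of the right side is correct (it gives $\frac{1}{8\sqrt{3}}$ times the Mellin--Barnes integral), so the true intermediate identity carries the prefactor $\frac{1}{80\sqrt{3}}$ --- precisely the paper's $\varPsi(s)$ of Proposition~\ref{prop:IKM24n_MB} --- not your $\frac{1}{72\sqrt{3}}$. Carried to the end, your constant would ``prove'' $2^3\IKM(2,4;1)=\frac{8\pi^2}{9}\,{}_4F_3$ instead of the stated $\frac{4\pi^2}{5}\,{}_4F_3$, i.e.\ an answer off by $\frac{10}{9}$.

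The second, deeper gap is the collapse step. Closing the contour (either way: the integrand is invariant under $s\mapsto\frac{1}{2}-s$, so both closures produce the same three families, at $s=n+\frac13,\,n+\frac12,\,n+\frac23$, matching $f_1(1)$, $f_3(1)$, $f_2(1)$ in the notation of Proposition~\ref{prop:Laporta_Pnu_sqr}, as you say) yields one fixed three-term combination $c_1f_1(1)+c_3f_3(1)+c_2f_2(1)$. To split this into the two stated forms of \eqref{eq:IKM241_4F3} you need the specific linear relation
\begin{align*}
\frac{3\pi^{3/2}}{20}\left\{\frac{\sqrt{3}}{2^6}\left[\frac{\Gamma\left(\frac13\right)}{\sqrt{\pi}}\right]^9 f_1(1)+\frac{2^4}{3}\left[\frac{\sqrt{\pi}}{\Gamma\left(\frac13\right)}\right]^9 f_2(1)\right\}=\frac{\pi^2}{10}\,f_3(1),
\end{align*}
and your proposed derivation of it is a non sequitur: three numbers are always ``mutually proportional,'' and the rank-one observation (exponents $\{0,1,1,2\}$ at $z=1$) only says that the solutions vanishing at $z=1$ form a hyperplane --- it cannot produce the connection coefficients, i.e.\ the actual ratios $f_i(1)/f_j(1)$. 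Matching leading $z\to 0^+$ coefficients cannot produce them either: that device (as used for the two $G$-functions in Proposition~\ref{prop:Laporta_Pnu_sqr}) identifies solutions by their local data at $0$, whereas you need values at $z=1$ of solutions normalized at $z=0$ --- a genuine connection problem. The paper supplies the missing ingredient by an extra elementary input: it evaluates $\frac{1}{2\pi i}\int_{\frac14-i\infty}^{\frac14+i\infty}\frac{\Gamma\left(\frac13-s\right)\Gamma\left(\frac12-s\right)\Gamma\left(\frac23-s\right)\Gamma(s)}{\Gamma\left(\frac56-s\right)\Gamma(1-s)\Gamma\left(\frac76-s\right)\Gamma\left(s+\frac12\right)}\,\D s$ twice, closing left (giving $2\sqrt{3}\,f_3(1)$) and right (giving a combination of $f_1(1)$, $f_3(1)$, $f_2(1)$), and the equality of the two evaluations is exactly the relation displayed above. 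Your passing mention of Bailey's identity \eqref{eq:Bailey7F6} could be developed into an alternative proof of that relation, but as written this step of your argument does not exist.
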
\begin{proof}

Noting the Neumann addition theorems in \eqref{eq:I0_add} and \eqref{eq:K0_add},  we may integrate the Mellin--Barnes representation in  \eqref{eq:MB1}, and deduce\begin{align}\begin{split}&
\int_0^\infty [I_0(t)]^2[K_0(t)]^4t\D t\\={}&\frac{1}{80\sqrt{3}}\frac{1}{2\pi i}\int_{\frac14-i\infty}^{\frac14+i\infty}\frac{\Gamma \left(\frac{1}{3}-s\right) \Gamma \left(\frac{1}{2}-s\right) \Gamma \left(\frac{2}{3}-s\right) \Gamma \left(s-\frac{1}{6}\right) \Gamma (s) \Gamma \left(s+\frac{1}{6}\right)}{\Gamma (1-s) \Gamma \left(s+\frac{1}{2}\right)}\D s.\end{split}
\end{align}Closing the contour to the left, and collecting residues at all the simple poles, we may recast the expression above into\begin{align}\begin{split}&
\frac{3 \pi ^{3/2}}{10}\left\{ \frac{\sqrt{3} }{2^6 }\left[\frac{\Gamma \left(\frac{1}{3}\right)}{\sqrt{\pi}}\right]^9\, _4F_3\left(\left. \begin{array}{c}\frac{1}{6},\frac{1}{3},\frac{1}{3},\frac{1}{2}\\[4pt]\frac{2}{3},\frac{5}{6},\frac{5}{6}\end{array} \right|1\right)+\frac{2^{4}}{3}\left[\frac{\sqrt{\pi}}{\Gamma \left(\frac{1}{3}\right)}\right]^9\, _4F_3\left(\left. \begin{array}{c}\frac{1}{2},\frac{2}{3},\frac{2}{3},\frac{5}{6}\\[4pt]\frac{7}{6},\frac{7}{6},\frac{4}{3}\end{array} \right|1\right) \right\}\\{}&-\frac{\pi^{2}}{10}{_4F_3}\left(\left.\begin{array}{c}
\frac{1}{3},\frac{1}{2},\frac{1}{2},\frac{2}{3} \\[4pt]\frac{5}{6},1,\frac{7}{6} \\
\end{array}\right|1\right).\end{split}
\end{align}

We are almost done, except that we still need to verify  the last equality in  \eqref{eq:IKM241_4F3}. Towards this end,  we consider the following contour integral:\begin{align}
\frac{1}{2\pi i}\int_{\frac{1}{4}-i\infty}^{\frac{1}{4}+i\infty}\frac{\Gamma \left(\frac{1}{3}-s\right) \Gamma \left(\frac{1}{2}-s\right) \Gamma \left(\frac{2}{3}-s\right) \Gamma (s)}{\Gamma \left(\frac{5}{6}-s\right) \Gamma (1-s)\Gamma \left(\frac{7}{6}-s\right)  \Gamma \left(s+\frac{1}{2}\right)}\D s.
\end{align}Closing the contour leftwards, and summing over all the residues at $ s=-n,n\in\mathbb Z_{\geq0}$, we may evaluate the integral above as \begin{align}
2\sqrt{3}{_4F_3}\left(\left.\begin{array}{c}
\frac{1}{3},\frac{1}{2},\frac{1}{2},\frac{2}{3} \\[4pt]\frac{5}{6},1,\frac{7}{6} \\
\end{array}\right|1\right);
\end{align}closing the contour rightwards, we find that the total contributions from the  residues (\textit{i.e.} sum of all the residues, up to an overall minus sign) at  $ s=n+\frac{1}{3}$, $ s=n+\frac{1}{2}$ and $ s=n+\frac{2}{3}$ (for all $n\in\mathbb Z_{\geq0} $) to the contour integral are \begin{align}\frac{9}{16 \sqrt{\pi }}\left[\frac{\Gamma \left(\frac{1}{3}\right)}{\sqrt{\pi}}\right]^9\, &{_4F_3}\left(\left. \begin{array}{c}\frac{1}{6},\frac{1}{3},\frac{1}{3},\frac{1}{2}\\[4pt]\frac{2}{3},\frac{5}{6},\frac{5}{6}\end{array} \right|1\right),\\
-6\sqrt{3}&{_4F_3}\left(\left.\begin{array}{c}
\frac{1}{3},\frac{1}{2},\frac{1}{2},\frac{2}{3} \\[4pt]\frac{5}{6},1,\frac{7}{6} \\
\end{array}\right|1\right),\\\text{and }64 \sqrt{\frac{3}{\pi }}\left[\frac{\sqrt{\pi}}{\Gamma \left(\frac{1}{3}\right)}\right]^9&{ _4F_3}\left(\left. \begin{array}{c}\frac{1}{2},\frac{2}{3},\frac{2}{3},\frac{5}{6}\\[4pt]\frac{7}{6},\frac{7}{6},\frac{4}{3}\end{array} \right|1\right) ,
\end{align}respectively. Therefore, our goal is achieved.
\end{proof}
\begin{proposition}[Broadhurst integral]We have \begin{align}\begin{split}&
\int_0^1{_2F_1}\left(\left.\begin{array}{c}
\frac{1}{3},\frac{2}{3}\ \\[4pt]
1 \\
\end{array}  \right|x\right){_2F_1}\left(\left.\begin{array}{c}
\frac{1}{3},\frac{2}{3}\ \\[4pt]
1 \\
\end{array}  \right|1-x\right)\frac{\D x}{\sqrt{1-x}}\\={}&\frac{3 }{4 \sqrt{2} \pi ^2}G_{4,4}^{2,4}\left(1\left|
\begin{array}{c}
 \frac{1}{3},\frac{1}{2},\frac{1}{2},\frac{2}{3} \\[4pt]
 0,0,-\frac{1}{4},\frac{1}{4} \\
\end{array}
\right.\right)=3{_4F_3}\left(\left.\begin{array}{c}
\frac{1}{3},\frac{1}{2},\frac{1}{2},\frac{2}{3} \\[4pt]\frac{5}{6},1,\frac{7}{6} \\
\end{array}\right|1\right).\end{split}\label{eq:Broadhurst4F3}
\end{align}\end{proposition}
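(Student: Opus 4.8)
The plan is to prove the two equalities in \eqref{eq:Broadhurst4F3} in turn: the second (reducing the Meijer $G$-function to a single ${}_4F_3$) is the cleaner one and I would dispatch it by Bailey's identity \eqref{eq:Bailey7F6}, while the first (identifying the integral with $\frac{3}{4\sqrt2\,\pi^2}G_{4,4}^{2,4}$) is where the real work lies. Concretely, for the second equality I would specialize \eqref{eq:Bailey7F6} to $a=\frac12,\,b=\frac23,\,c=\frac12,\,d=\frac13,\,e=\frac34,\,f=\frac14$. With these values the lower parameters $\{0,1+a-b-c-d,e-a,f-a\}$ of the $G_{4,4}^{2,4}$ on the right of \eqref{eq:Bailey7F6} become $\{0,0,-\frac14,\frac14\}$ and its upper parameters $\{e+f-a,1-b,1-c,1-d\}$ become $\{\frac13,\frac12,\frac12,\frac23\}$, matching the $G$-function in \eqref{eq:Broadhurst4F3} exactly. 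The resulting ${}_7F_6$ carries three coincident numerator/denominator pairs ($\frac14,\frac34,\frac54$) that cancel, collapsing it to ${}_4F_3\!\left(\frac13,\frac12,\frac12,\frac23;\frac56,1,\frac76\,\middle|\,1\right)$. It then remains to simplify the gamma prefactor: using reflection and the Legendre--Gau\ss{} duplication formula one verifies $\frac{\Gamma(3/2)[\Gamma(2/3)]^2[\Gamma(1/2)]^3[\Gamma(1/3)]^2}{\Gamma(5/6)\Gamma(7/6)\Gamma(3/4)\Gamma(5/4)}=4\sqrt2\,\pi^2$, which converts the prefactor $\frac{3}{4\sqrt2\,\pi^2}$ into the bare $3$. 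This is entirely parallel to the Bailey reductions culminating in \eqref{eq:Bailey_redn}.

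For the first equality I would follow the Mellin-transform route of Proposition \ref{prop:Laporta_Pnu_sqr}. Substituting $x\mapsto 1-x$ recasts the integral as $\int_0^1{}_2F_1\!\left(\frac13,\frac23;1;t\right){}_2F_1\!\left(\frac13,\frac23;1;1-t\right)t^{-1/2}\,\D t$. I would represent the factor ${}_2F_1\!\left(\frac13,\frac23;1;1-t\right)$ through its Mellin transform \eqref{eq:Pnu_Mellin} (with $\nu=-\frac13$), i.e.\ $[\Gamma(s)]^2/[\Gamma(s+\frac13)\Gamma(s+\frac23)]$, and perform the remaining $t$-integration by Euler's beta integral. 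The decisive structural difference from Proposition \ref{prop:Laporta_Pnu_sqr}---where both hypergeometric factors shared the argument $1-t$ and the convolution closed into quotients of gamma functions, yielding \eqref{eq:LaportaMeijer}---is that here the surviving factor ${}_2F_1\!\left(\frac13,\frac23;1;t\right)$ carries the \emph{complementary} argument $t$, so Euler's integral returns a ${}_3F_2(\,\cdot\,|1)$ rather than a ratio of gammas. I would dispose of this ${}_3F_2(\,\cdot\,|1)$ by replacing it with its own Barnes contour integral, thereby turning the expression for the integral into a double Mellin--Barnes integral, and then collapse the inner contour by residue calculus. The duplication identity $\Gamma(\frac34-s)\Gamma(\frac54-s)=\sqrt\pi\,2^{2s-1/2}\Gamma(\frac32-2s)$ is what manufactures both the half-integer shifts $\pm\frac14$ in the lower parameters of $G_{4,4}^{2,4}$ and the factor $\sqrt2$ in the prefactor.

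The main obstacle is exactly this first equality, and within it the handling of the ${}_3F_2(\,\cdot\,|1)$ forced by the complementary arguments. Unlike the squared integrand of Proposition \ref{prop:Laporta_Pnu_sqr}, the mixed product does not reduce to a clean Mellin convolution of gamma quotients: one can check (for instance by specializing at $s=0$) that the Mellin--Barnes integrand produced by the naive Parseval step differs \emph{pointwise} from the $G_{4,4}^{2,4}$ integrand, so the equality cannot follow from a term-by-term comparison and instead emerges only after the double-contour reduction, together with the residue bookkeeping in which the double poles contribute digamma/logarithmic pieces that must be shown to cancel. Once the $G_{4,4}^{2,4}$ representation is secured, the passage to the single ${}_4F_3$ via Bailey's identity is routine, and combining it with \eqref{eq:IKM241_4F3} closes the chain asserted in the Introduction.
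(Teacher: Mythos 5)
Your second equality is handled exactly as in the paper: the paper specializes Bailey's identity \eqref{eq:Bailey7F6} at $a=\tfrac12$, $b=\tfrac13$, $c=\tfrac23$, $d=\tfrac12$, $e=\tfrac34$, $f=\tfrac14$, which is your choice up to permuting $(b,c,d)$, and your evaluation of the gamma prefactor as $4\sqrt2\,\pi^2$ checks out. The genuine divergence---and the gap---is in the first equality. The paper does not Mellin-transform one factor at a time; it quotes the product representation \cite[(3.1.41)]{AGF_PartI}, which writes the whole product ${}_2F_1\bigl(\tfrac13,\tfrac23;1;x\bigr)\,{}_2F_1\bigl(\tfrac13,\tfrac23;1;1-x\bigr)$ as a \emph{single} Mellin--Barnes integral in the combined variable $4x(1-x)$. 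Integrating $[4x(1-x)]^{-s}(1-x)^{-1/2}$ over $x\in(0,1)$ is then a beta integral equal to $4^{-s}\Gamma(1-s)\Gamma\bigl(\tfrac12-s\bigr)/\Gamma\bigl(\tfrac32-2s\bigr)$, and it is the duplication formula applied to \emph{this} $\Gamma\bigl(\tfrac32-2s\bigr)$ that manufactures $\Gamma\bigl(\tfrac34-s\bigr)\Gamma\bigl(\tfrac54-s\bigr)$ and the $\sqrt2$. That product formula is the key input your proposal lacks, and without it your cited duplication identity has nothing to act on: in your double Mellin--Barnes integral the variable $s$ is conjugate to $t$, not to $4x(1-x)$, so no $\Gamma\bigl(\tfrac32-2s\bigr)$ can ever form and the $\pm\tfrac14$ parameters cannot arise along your route.

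What your setup actually produces, when executed correctly, is the \emph{third} member of \eqref{eq:Broadhurst4F3}, not the second. Write the Barnes integral of your ${}_3F_2$ in a variable $u$; after exchanging the order of the two contour integrals, the $s$-dependence of the inner integrand is
\begin{align*}
\frac{[\Gamma(s)]^2}{\Gamma\bigl(s+\frac13\bigr)\Gamma\bigl(s+\frac23\bigr)}\cdot\frac{\Gamma\bigl(\frac12-s+u\bigr)}{\Gamma\bigl(\frac32-s+u\bigr)}=\frac{[\Gamma(s)]^2}{\Gamma\bigl(s+\frac13\bigr)\Gamma\bigl(s+\frac23\bigr)}\cdot\frac{1}{\frac12+u-s},
\end{align*}
which has a single simple pole in the right half-plane and $O(|s|^{-2})$ decay there. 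Closing the $s$-contour rightwards (note that collapsing your ``inner'' $u$-contour rightwards would merely regenerate the ${}_3F_2$ series) leaves the lone residue $[\Gamma(\frac12+u)]^2/[\Gamma(\frac56+u)\Gamma(\frac76+u)]$; no double poles are ever crossed, so the digamma/logarithmic cancellations you anticipate simply do not occur. The surviving $u$-integral is precisely the Barnes integral of $3\,{}_4F_3\bigl(\tfrac13,\tfrac12,\tfrac12,\tfrac23;\tfrac56,1,\tfrac76;1\bigr)$. This still completes a proof of the proposition---the integral equals the ${}_4F_3$, and the $G^{2,4}_{4,4}$ with parameters $\pm\tfrac14$ is then recovered by running Bailey's identity in reverse---but the logical order is the opposite of the one you describe: in your route the Meijer $G$-function is the one member of \eqref{eq:Broadhurst4F3} that can only be reached through the second equality, never directly.
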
\begin{proof}We paraphrase \cite[][(3.1.41)]{AGF_PartI} as follows:\begin{align}\begin{split}&
_2F_1\left(\left.\begin{array}{c}
-\nu,\nu+1\ \\[4pt]
1 \\
\end{array}  \right|x\right){_2F_1}\left(\left.\begin{array}{c}
-\nu,\nu+1\ \\[4pt]
1 \\
\end{array}  \right|1-x\right)\\={}&\frac{\sin^2(\nu\pi)}{\pi^2}\frac{1}{2\pi i}\int_{\delta-i\infty}^{\delta+i\infty}\frac{[\Gamma(s)]^{2}\Gamma(\nu+1-s)\Gamma(-\nu-s)\Gamma(\frac{1}{2}-s)}{\sqrt{\pi}\Gamma(1-s)}\frac{\D s}{[4x(1-x)]^s},\end{split}
\end{align}where $0<\delta<\min\{\nu+1,-\nu\},0<x<1 $. This allows us to compute\begin{align}\begin{split}&
\int_0^1{_2F_1}\left(\left.\begin{array}{c}
-\nu,\nu+1\ \\[4pt]
1 \\
\end{array}  \right|x\right){_2F_1}\left(\left.\begin{array}{c}
-\nu,\nu+1\ \\[4pt]
1 \\
\end{array}  \right|1-x\right)\frac{\D x}{\sqrt{1-x}}\\={}&\frac{\sin^2(\nu\pi)}{\sqrt{2} \pi ^2}\frac{1}{2\pi i}\int_{\delta-i\infty}^{\delta+i\infty}\frac{ \left[\Gamma \left(\frac{1}{2}-s\right)\right]^2 [\Gamma (s)]^2 \Gamma (-s-\nu ) \Gamma (-s+\nu +1)}{ \Gamma \left(\frac{3}{4}-s\right) \Gamma \left(\frac{5}{4}-s\right)}\D s\\={}&\frac{\sin^2(\nu\pi)}{\sqrt{2} \pi ^2}G_{4,4}^{2,4}\left(1\left|
\begin{array}{c}
 \frac{1}{2},\frac{1}{2},-\nu ,\nu +1 \\[4pt]
 0,0,-\frac{1}{4},\frac{1}{4} \\
\end{array}
\right.\right).\end{split}\end{align}Setting $\nu=-\frac13$ in the equation above, and $a= \frac{1}{2},b=\frac{1}{3},c=\frac{2}{3},d=\frac{1}{2},e=\frac{3}{4},f=\frac{1}{4}$ in Bailey's identity \eqref{eq:Bailey7F6}, we arrive at the last expression in \eqref{eq:Broadhurst4F3}. \end{proof}

In the next two propositions, we establish hypergeometric representations for $ \IKM(2,4;3)$, as stated in \eqref{eq:IKM243_hypergeo_repn}.\begin{proposition}[Mellin--Barnes integrals for $ \IKM(2,4;2n+1),n\in\{0,1,2\}$]\label{prop:IKM24n_MB}\begin{enumerate}[leftmargin=*,  label=\emph{(\alph*)},ref=(\alph*),
widest=a, align=left]\item Setting \begin{align}
\varPsi(s):=\frac{\Gamma \left(\frac{1}{3}-s\right) \Gamma \left(\frac{1}{2}-s\right) \Gamma \left(\frac{2}{3}-s\right) \Gamma \left(s-\frac{1}{6}\right) \Gamma (s) \Gamma \left(s+\frac{1}{6}\right)}{80\sqrt{3}\Gamma (1-s) \Gamma \left(s+\frac{1}{2}\right)}=\frac{9\varPhi(s)}{5\pi\sin(2\pi s)},
\end{align}we have {\allowdisplaybreaks\begin{align}\begin{split}
&\int_0^\infty [I_0(t)]^{2}[K_0(t)]^4t\D t\equiv\IKM(2,4;1)\\={}&\frac{1}{2\pi i}\int_{\frac14-i\infty}^{\frac14+i\infty}\varPsi(s)\D s,\end{split}\\\begin{split}{}&\int_0^\infty [I_0(t)]^{2}[K_0(t)]^4t^{3}\D t\equiv\IKM(2,4;3)\\={}&\frac{1}{2\pi i}\int_{\frac14-i\infty}^{\frac14+i\infty}\varPsi(s)\left[\frac{1}{3 (5-6 s)}+\frac{1}{2 s+1}-\frac{2}{3}\right]\D s+\frac{4 \pi ^{3/2}}{45}\left[\frac{\sqrt{\pi}}{\Gamma\left( \frac{1}{3} \right)}\right]^{9},\end{split}\\\begin{split}&\int_0^\infty [I_0(t)]^{2}[K_0(t)]^4t^{5}\D t\equiv\IKM(2,4;5)\\={}&\frac{1}{2\pi i}\int_{\frac14-i\infty}^{\frac14+i\infty}\varPsi(s)\left[\frac{25}{54 (7-6 s)}+\frac{43}{108 (5-6 s)}+\frac{23}{4 (2 s+1)}-\frac{45}{2 (2 s+3)}+\frac{68}{27} \right]\D s\\{}&+\frac{43 \pi ^{3/2}}{405}\left[\frac{\sqrt{\pi }}{\Gamma \left(\frac{1}{3}\right)}\right]^9-\frac{\pi ^{3/2}}{18432 \sqrt{3}}\left[\frac{\Gamma \left(\frac{1}{3}\right)}{\sqrt{\pi }}\right]^9.\end{split}
\end{align}}Moreover, we have  the following vanishing identity:\begin{align}\begin{split}0={}&
\frac{1}{2\pi i}\int_{\frac14-i\infty}^{\frac14+i\infty}\varPsi(s)\left[\frac{100}{3 (7-6 s)}+\frac{1}{3 (5-6 s)}+\frac{329}{2 s+1}-\frac{1620}{2 s+3}+240 \right]\D s\\&-\frac{\pi ^{3/2}}{256 \sqrt{3}}\left[\frac{\Gamma \left(\frac{1}{3}\right)}{\sqrt{\pi }}\right]^9+\frac{4 \pi ^{3/2}}{45}\left[\frac{\sqrt{\pi }}{\Gamma \left(\frac{1}{3}\right)}\right]^9.\end{split}\label{eq:Psi_vanish}
\end{align}\item We have\begin{align}
\frac{1}{2\pi i}\int_{\frac14-i\infty}^{\frac14+i\infty}\varPsi(s)\left(\frac{1}{5-6s}-\frac{2}{1+2s}+1\right)\D s=-\frac{4 \pi ^{3/2}}{15} \left[\frac{\sqrt{\pi}}{\Gamma\left( \frac{1}{3} \right)}\right]^{9},
\end{align}which entails\begin{align}\int_0^
\infty [I_0(t)]^{2}[K_0(t)]^4t^{3}\D t=\frac{1}{2\pi i}\int_{\frac14-i\infty}^{\frac14+i\infty}\varPsi(s)\left[\frac{5}{6(5-6s)}-\frac{1}{6}\right]\D s+\frac{2 \pi ^{3/2}}{9}\left[\frac{\sqrt{\pi}}{\Gamma\left( \frac{1}{3} \right)}\right]^{9}.
\end{align} \end{enumerate}\end{proposition}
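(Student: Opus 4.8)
The organizing principle is the factorization $\varPsi(s)=\dfrac{9\varPhi(s)}{5\pi\sin(2\pi s)}$ recorded in the statement, which transmits two structural features from $\varPhi$ to $\varPsi$: the reflection symmetry $\varPsi(s)=\varPsi\!\left(\tfrac12-s\right)$ (since $\sin(2\pi s)$ is invariant under $s\mapsto\tfrac12-s$ and $\varPhi(s)=\varPhi(\tfrac12-s)$), and the recursion $\varPsi(s+1)/\varPsi(s)=\varPhi(s+1)/\varPhi(s)$ (since $\sin(2\pi s)$ has period $1$). Consequently every contour manipulation performed for $\varPhi$ in Proposition \ref{prop:IKM15n_MB} admits a verbatim analogue for $\varPsi$; the only arithmetic that changes is the residue bookkeeping, because $\varPsi$ acquires simple poles at the half-integers $s=n+\tfrac12$ (from $1/\sin(2\pi s)$, equivalently from $\Gamma(\tfrac12-s)$) where $\varPhi$ is regular. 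This observation lets me mirror the entire treatment of $\IKM(1,5;2n+1)$.

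For part (a), I would first record $\IKM(2,4;1)=\frac{1}{2\pi i}\int_{1/4-i\infty}^{1/4+i\infty}\varPsi(s)\,\D s$, which is merely the Mellin--Barnes representation already obtained in the proof of Proposition \ref{prop:IKM241_4F3}, rewritten with the abbreviation $\varPsi$. To reach the higher moments, I would apply the Bessel operator $u\,\partial_u^2+\partial_u$ (once for $t^2$, twice for $t^4$), which sends $I_0(\sqrt u\,t)$ and $K_0(\sqrt u\,t)$ to $\tfrac{t^2}{4}$ times themselves, to the left-hand side of \eqref{eq:MB1}; integrating the result against $\tfrac{2}{\pi}\,\D u/\sqrt{u(4-u)}$ over $u\in(0,4)$ and folding $I_0(\sqrt u\,t)$, $K_0(\sqrt u\,t)$ back through the Neumann addition formulae \eqref{eq:I0_add} and \eqref{eq:K0_add}, the two terms of \eqref{eq:MB1} collapse to $(1+4)\,\IKM(2,4;2n+1)$. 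Repeated contour shifts, collecting the residues crossed along the way, then recast this as the advertised integral with rational weight plus a trailing gamma-constant. The computation runs parallel to the $\IKM(1,5)$ case but is lighter, since $\int_0^\infty[I_0(t)]^2[K_0(t)]^4t^{2n+1}\,\D t$ converges on its own and needs none of the $1/(4t^2)$-type regularizations that \eqref{eq:IKM333IKM153}--\eqref{eq:IKM335IKM155} imposed. Finally, \eqref{eq:Psi_vanish} is the transcription, through these representations, of the sum rule $\int_0^\infty[I_0]^2[K_0]^4t(2-85t^2+72t^4)\,\D t=0$ proved (via Vanhove's operator $\widetilde L_4$) in the remark following \eqref{eq:IKM151IKM153IKM155}.

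For part (b), the first displayed identity is the $\varPsi$-avatar of the cancellation $0=\frac{1}{2\pi i}\int_{C_*}\varPhi(s)\big(\tfrac{1}{5-6s}-\tfrac{2}{1+2s}+1\big)\,\D s$ established in Proposition \ref{prop:IKM15n_MB}\ref{itm:Cstar}. I would reproduce that derivation step for step: apply the reflections $s\mapsto\tfrac32-s$ and $s\mapsto\tfrac12-s$ together with the gamma recursion to weighted integrals of $\varPsi$, producing the two relations analogous to \eqref{eq:Phi_cancel1} and \eqref{eq:Phi_cancel2}, and then use them to eliminate the $\tfrac{1}{7-6s}$ and $\tfrac{1}{3+2s}$ terms from \eqref{eq:Psi_vanish}. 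What remains is exactly $\frac{1}{2\pi i}\int_{1/4}\varPsi(s)\big(\tfrac{1}{5-6s}-\tfrac{2}{1+2s}+1\big)\,\D s$, now equal to the nonzero constant $-\tfrac{4\pi^{3/2}}{15}[\sqrt\pi/\Gamma(\tfrac13)]^9$ rather than $0$. The second displayed identity then follows by algebra alone: subtracting the part (a) representation of $\IKM(2,4;3)$ from the target, the difference of rational weights is $\tfrac12\big(\tfrac{1}{5-6s}+1-\tfrac{2}{1+2s}\big)$, so the first identity contributes $-\tfrac{2\pi^{3/2}}{15}[\sqrt\pi/\Gamma(\tfrac13)]^9$, which precisely matches the difference $\tfrac{4\pi^{3/2}}{45}-\tfrac{2\pi^{3/2}}{9}$ of the two trailing constants.

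The main obstacle is the residue bookkeeping that separates $\varPsi$ from $\varPhi$. For $\varPhi$ the half-integer points are regular, so the reflection-and-shift manipulations close cleanly and the cancellation identities vanish outright; for $\varPsi$ the poles at $s=n+\tfrac12$, along with those introduced by the rational weights (e.g.\ at $s=\tfrac56$), are crossed during the contour shifts and deposit nonzero residues. Forcing these residues --- each a product of gamma values at rational arguments, simplified by the reflection and Legendre--Gau{\ss} multiplication formulae --- to assemble into exactly the constants $\tfrac{4\pi^{3/2}}{45}[\sqrt\pi/\Gamma(\tfrac13)]^9$, $-\tfrac{\pi^{3/2}}{256\sqrt3}[\Gamma(\tfrac13)/\sqrt\pi]^9$, and $-\tfrac{4\pi^{3/2}}{15}[\sqrt\pi/\Gamma(\tfrac13)]^9$ is where the delicate, if mechanical, effort resides.
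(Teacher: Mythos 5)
Your proposal is correct and follows exactly the route the paper intends: its own ``proof'' of this proposition is a single sentence deferring to the method of Proposition~\ref{prop:IKM15n_MB}, and your plan---reuse the $\IKM(2,4;1)$ Mellin--Barnes kernel from Proposition~\ref{prop:IKM241_4F3}, lift to higher moments with the Bessel operator $u\,\partial_u^2+\partial_u$ folded through the Neumann formulae, transcribe the Bailey--Borwein--Broadhurst--Glasser sum rule $\int_0^\infty [I_0]^2[K_0]^4t(2-85t^2+72t^4)\,\D t=0$ into \eqref{eq:Psi_vanish}, and rerun the reflection/recursion cancellations of part~(b) using $\varPsi(s)=\varPsi\left(\tfrac12-s\right)$---is precisely that argument, with the correct identification that the only new ingredient is residue bookkeeping at the poles $s=n+\tfrac12$ where $\varPhi$ is regular. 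Your consistency checks (the weight difference $\tfrac12\left(\tfrac{1}{5-6s}-\tfrac{2}{1+2s}+1\right)$ matching the constant difference $\tfrac{4\pi^{3/2}}{45}-\tfrac{2\pi^{3/2}}{9}=-\tfrac{2\pi^{3/2}}{15}$) are accurate, so nothing further is needed.
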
\begin{proof}The derivations of these formulae, in a similar vein as the proof of Proposition \ref{prop:IKM15n_MB}, are left to diligent readers.\end{proof}\begin{proposition}[Hypergeometric reduction of $\IKM(2,4;3) $]\label{prop:IKM24n_pFq}We have \begin{align}
\begin{split}&\int_0^\infty [I_0(t)]^{2}[K_0(t)]^4t(1-8t^2)\D t=\frac{7}{240 \sqrt{3}} G_{4,4}^{3,3}\left(1\left|
\begin{array}{c}
 -\frac{1}{2},\frac{2}{3},\frac{4}{3},\frac{1}{2} \\[4pt]
 -\frac{5}{6},-\frac{1}{6},1,0 \\
\end{array}
\right.\right)\\={}&\frac{7 \pi ^{3/2}}{60}\left\{\frac{\sqrt{3} }{2^7}\left[\frac{\Gamma \left(\frac{1}{3}\right)}{\sqrt{\pi }}\right]^9\, _4F_3\left(\left. \begin{array}{c}-\frac{1}{2},\frac{1}{6},\frac{1}{3},\frac{4}{3}\\[4pt]-\frac{1}{6},\frac{5}{6},\frac{5}{3}\end{array} \right|1\right)-\frac{5}{7}\frac{2^4}{3}  \left[\frac{\sqrt{\pi }}{\Gamma \left(\frac{1}{3}\right)}\right]^9\, _4F_3\left(\left. \begin{array}{c}-\frac{7}{6},-\frac{1}{2},-\frac{1}{3},\frac{2}{3}\\[4pt]-\frac{5}{6},\frac{1}{6},\frac{1}{3}\end{array} \right|1\right) \right\}\\={}&\frac{9\pi ^2}{550}  \, _4F_3\left(\left.\begin{array}{c}
\frac{2}{3},\frac{4}{3},\frac{3}{2},\frac{5}{2} \\[4pt]2,\frac{13}{6},\frac{17}{6} \\
\end{array}\right|1\right).\end{split}\label{eq:IKM243_pFq_G}
\end{align}\end{proposition}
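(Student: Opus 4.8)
The plan is to prove the three equalities of \eqref{eq:IKM243_pFq_G} in succession, following the template already used in Propositions \ref{prop:Laporta_Pnu_sqr}, \ref{prop:IKM15n_Meijer} and \ref{prop:IKM241_4F3}. For the first equality I would start from the Mellin--Barnes representations of $\IKM(2,4;1)$ and $\IKM(2,4;3)$ supplied by Proposition \ref{prop:IKM24n_MB}(b) and assemble the relevant combination,
\begin{align*}
\int_0^\infty[I_0(t)]^2[K_0(t)]^4t(1-8t^2)\D t&=\IKM(2,4;1)-8\IKM(2,4;3)\\
&=-\frac{1}{3}\frac{1}{2\pi i}\int_{C_*}\varPsi(s)\left(\frac{20}{5-6s}-7\right)\D s,
\end{align*}
where the additive gamma constant $-\frac{16\pi^{3/2}}{9}[\sqrt\pi/\Gamma(\frac13)]^9$ contributed by the two formulae is absorbed into the residues crossed when the line $\R s=\frac14$ is traded for the contour $C_*$, precisely as in the passage to \eqref{eq:IKM153_Cstar}. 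I would then recognise the target Meijer kernel directly: a short gamma-recursion computation shows that the Mellin--Barnes integrand $K(s)$ of $G_{4,4}^{3,3}\!\left(1\left|-\tfrac12,\tfrac23,\tfrac43,\tfrac12;-\tfrac56,-\tfrac16,1,0\right.\right)$ factors as $K(s)=\frac{1440\sqrt3\,s(\frac12-s)}{(3s+1)(5-6s)}\,\varPsi(s)$. Using this factorisation together with the reflection symmetry $\varPsi(s)=\varPsi(\frac12-s)$ (inherited from $\varPhi(s)=\varPhi(\frac12-s)$ and $\sin2\pi(\frac12-s)=\sin2\pi s$) and appropriate contour shifts, the $C_*$-integral above should be converted into $\frac{7}{240\sqrt3}G_{4,4}^{3,3}$, fixing the normalising constant $\frac{7}{240\sqrt3}$ (the bookkeeping anchor being $\frac{7}{240\sqrt3}\cdot1440\sqrt3=42$).

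For the second equality I would evaluate the Meijer function by residue calculus, in the manner of Propositions \ref{prop:Laporta_Pnu_sqr} and \ref{prop:IKM15n_Meijer}(a). All poles of $K(s)$ are simple, so no digamma terms intervene; a residue computation over the three arithmetic progressions $s=\frac13+k$, $s=-\frac13+k$ and $s=\frac32+k$ ($k\in\mathbb Z_{\geq0}$) yields, respectively, the three series ${_4F_3}(-\tfrac12,\tfrac16,\tfrac13,\tfrac43;-\tfrac16,\tfrac56,\tfrac53;1)$, ${_4F_3}(-\tfrac76,-\tfrac12,-\tfrac13,\tfrac23;-\tfrac56,\tfrac16,\tfrac13;1)$ and ${_4F_3}(\tfrac23,\tfrac43,\tfrac32,\tfrac52;2,\tfrac{13}{6},\tfrac{17}{6};1)$. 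The first two are exactly the building blocks displayed in \eqref{eq:IKM243_pFq_G}, and the third is the single series of the final line.

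The third equality then reconciles the two presentations. Because the single ${_4F_3}(\tfrac23,\tfrac43,\tfrac32,\tfrac52;2,\tfrac{13}{6},\tfrac{17}{6};1)$ is precisely the $s=\frac32+k$ residue progression, the claim reduces to one linear relation among the three hypergeometric values above; I would establish it either by the two-sided contour-closure device of Proposition \ref{prop:IKM241_4F3} (pairing the $a$-pole progression $s=\frac32+k$ against the $b$-pole progression $s=-1-k$, which a residue computation shows produces the same series up to sign), or equivalently by Bailey's identity \eqref{eq:Bailey7F6} under a suitable choice of $a,b,c,d,e,f$. Feeding this relation back into the three-term residue sum collapses it to the two-term form recorded in \eqref{eq:IKM243_pFq_G}.

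The main obstacle will be the first equality. Matching the $C_*$-integral to the exact $G_{4,4}^{3,3}$ demands careful tracking of every pole crossed in the contour deformations and of the additive gamma constant, and one must check that symmetrising the rational prefactor $\frac{20}{5-6s}-7$ under $s\mapsto\frac12-s$ (whence $5-6s\mapsto2(1+3s)$) against the factor $\frac{s(\frac12-s)}{(3s+1)(5-6s)}$ indeed reproduces the normalisation $\frac{7}{240\sqrt3}$. This is the same delicate step that occupied the proofs of Proposition \ref{prop:IKM15n_MB}(b) and Proposition \ref{prop:IKM15n_Meijer}(b), and I expect it to be the most technically demanding part here as well.
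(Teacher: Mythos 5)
Your identification of the Meijer kernel is correct---the factorisation $K(s)=\frac{1440\sqrt3\,s(\frac12-s)}{(3s+1)(5-6s)}\varPsi(s)$ is equivalent to the integrand identity $\frac{\varPsi(s)}{3}\bigl(\frac{10}{5-6s}+\frac{5}{1+3s}-7\bigr)=-\frac{7}{240\sqrt3}K(s)$ on which the paper's proof turns, and your plan for the second and third equalities (residues over $s=k\pm\frac13$, $s=k+\frac32$, then a two-sided contour evaluation to supply one linear relation) is the paper's own. The genuine gap is your opening identity: $\IKM(2,4;1)-8\IKM(2,4;3)$ is \emph{not} equal to $-\frac13\frac1{2\pi i}\int_{C_*}\varPsi(s)\bigl(\frac{20}{5-6s}-7\bigr)\D s$. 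The passage to \eqref{eq:IKM153_Cstar} exploits a feature special to $\varPhi$: every pole of $\varPhi(s)$ with $\R s>\frac14$ lies on the two progressions encircled by $C_*$, so the only residue crossed in trading the line for $C_*$ is the one at $s=\frac56$ coming from the rational prefactor, and that residue is exactly what the trailing constant cancels. By contrast, $\varPsi(s)$ contains the extra factor $\Gamma(\frac12-s)$ (equivalently, $\varPsi(s)=\frac{9\varPhi(s)}{5\pi\sin(2\pi s)}$), hence a third progression of poles at $s=n+\frac12$, $n\in\mathbb Z_{\geq0}$, which the trade also crosses; their residues form an infinite series, not a gamma constant. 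The failure is already visible for the plain moment, using only facts proved in the paper: although $\IKM(2,4;1)=\frac1{2\pi i}\int_{\frac14-i\infty}^{\frac14+i\infty}\varPsi(s)\D s$ carries no prefactor and no constant, the reflection $\varPsi(s)=\varPsi(\frac12-s)$ converts the $C_*$ residues into minus the residues at $s=\frac16-n$ and $s=-\frac16-n$, so that $\frac1{2\pi i}\int_{C_*}\varPsi(s)\D s=\frac{3\pi^{3/2}}{10}\{\cdots\}=2\,\IKM(2,4;1)$ by Proposition \ref{prop:IKM241_4F3}; the missing half is precisely $\sum_{n\geq0}\operatorname{Res}_{s=n+\frac12}\varPsi(s)=\frac{\pi^2}{10}\,{}_4F_3\bigl(\tfrac13,\tfrac12,\tfrac12,\tfrac23;\tfrac56,1,\tfrac76;1\bigr)=\IKM(2,4;1)$. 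For your weighted integrand the crossed half-integer residues contribute $\frac13\sum_{n\geq0}\bigl(\frac{10}{1-3n}-7\bigr)\operatorname{Res}_{s=n+\frac12}\varPsi(s)$, whose $n=0$ term alone equals $\frac{\pi^2}{10}$ and which totals roughly $0.64$, whereas $\IKM(2,4;1)-8\IKM(2,4;3)\approx0.22$. Note also that these half-integer poles are exactly the ones surviving in $K(s)$ (at $s=n+\frac32$; the factor $s(\frac12-s)$ only kills $s=0,\frac12$), i.e.\ the source of the series ${}_4F_3\bigl(\tfrac23,\tfrac43,\tfrac32,\tfrac52;2,\tfrac{13}6,\tfrac{17}6;1\bigr)$ in the last line of \eqref{eq:IKM243_pFq_G}, so discarding them contradicts your own subsequent residue computation.

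There is a second flaw in the same step: symmetrisation under $s\mapsto\frac12-s$ is legitimate on the line $\R s=\frac14$, which this reflection maps onto itself, but not termwise over $C_*$, whose circles it sends to circles about the \emph{left} poles $s=\frac16-n$, $s=-\frac16-n$. Both defects are repaired by doing exactly what the paper does: keep the vertical line together with the explicit constant $-\frac{16\pi^{3/2}}{9}[\sqrt\pi/\Gamma(\frac13)]^9$, symmetrise there, identify the integrand with $-\frac{7}{240\sqrt3}K(s)$, and only then expand in residues over the three progressions; this yields $\frac{7\pi^{3/2}}{30}\{\cdots\}-\frac{9\pi^2}{550}{}_4F_3(\cdots)$, i.e.\ twice the two-term form minus the one-term form, after which the two-sided evaluation of the auxiliary integral $\frac1{2\pi i}\int_{\frac14-i\infty}^{\frac14+i\infty}\frac{\Gamma(-\frac13-s)\Gamma(\frac13-s)\Gamma(\frac32-s)\Gamma(s+1)}{\Gamma(1-s)\Gamma(s+\frac12)\Gamma(\frac76-s)\Gamma(\frac{11}6-s)}\D s$---the device you too proposed---collapses it to the two claimed evaluations. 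Alternatively, your contour representation becomes correct if $C_*$ is enlarged to include clockwise circles around $s=n+\frac12$, though you would still need to return to the reflection-invariant line for the symmetrisation step.
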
\begin{proof}Arguing as in Proposition \ref{prop:IKM15n_Meijer}(b), we have \begin{align}\begin{split}&
\int_0^\infty [I_0(t)]^{2}[K_0(t)]^4t(1-8t^2)\D t\\={}&-\frac{1}{3}\frac{1}{2\pi i}\int_{\frac14-i\infty}^{\frac14+i\infty}\varPsi(s)\left(\frac{20}{5-6 s}-7\right)\D s-\frac{16 \pi ^{3/2}}{9} \left[\frac{\sqrt{\pi}}{\Gamma\left( \frac{1}{3} \right)}\right]^{9}\\={}&-\frac{1}{2\pi i}\int_{\frac14-i\infty}^{\frac14+i\infty}\frac{\varPsi(s)}{3}\left(\frac{10}{5-6 s}+\frac{5}{1+3s}-7\right)\D s-\frac{16 \pi ^{3/2}}{9} \left[\frac{\sqrt{\pi}}{\Gamma\left( \frac{1}{3} \right)}\right]^{9}.\end{split}
\end{align}Checking the definition of  $ G_{4,4}^{3,3}$ against the integrand\begin{align}
\frac{\varPsi(s)}{3}\left(\frac{10}{5-6 s}+\frac{5}{1+3s}-7\right)=-\frac{7 \Gamma \left(-\frac{1}{3}-s\right) \Gamma \left(\frac{1}{3}-s\right) \Gamma \left(\frac{3}{2}-s\right) \Gamma \left(s-\frac{5}{6}\right) \Gamma \left(s-\frac{1}{6}\right) \Gamma (s+1)}{240 \sqrt{3} \Gamma (1-s) \Gamma \left(s+\frac{1}{2}\right)},
\end{align} we can verify the first equality in \eqref{eq:IKM243_pFq_G}.    Summing over all the residues of the last integrand at $ n-\frac13,n+\frac{1}{3},n+\frac{1}{2}$, where $ n\in\mathbb Z_{\geq0}$, we arrive at\begin{align}\begin{split}&
\int_0^\infty [I_0(t)]^{2}[K_0(t)]^4t(1-8t^2)\D t\\={}&\frac{7 \pi ^{3/2}}{30}\left\{\frac{\sqrt{3} }{2^7}\left[\frac{\Gamma \left(\frac{1}{3}\right)}{\sqrt{\pi }}\right]^9\, _4F_3\left(\left. \begin{array}{c}-\frac{1}{2},\frac{1}{6},\frac{1}{3},\frac{4}{3}\\[4pt]-\frac{1}{6},\frac{5}{6},\frac{5}{3}\end{array} \right|1\right)-\frac{5}{7}\frac{2^4}{3}  \left[\frac{\sqrt{\pi }}{\Gamma \left(\frac{1}{3}\right)}\right]^9\, _4F_3\left(\left. \begin{array}{c}-\frac{7}{6},-\frac{1}{2},-\frac{1}{3},\frac{2}{3}\\[4pt]-\frac{5}{6},\frac{1}{6},\frac{1}{3}\end{array} \right|1\right) \right\}\\{}&-\frac{9\pi ^2}{550}  \, _4F_3\left(\left.\begin{array}{c}
\frac{2}{3},\frac{4}{3},\frac{3}{2},\frac{5}{2} \\[4pt]2,\frac{13}{6},\frac{17}{6} \\
\end{array}\right|1\right).\end{split}
\end{align}

Similar to what we did  in proof of Proposition \ref{prop:IKM241_4F3}, we evaluate the  following contour integral\begin{align}\frac1{2\pi i}\int_{\frac14-i\infty}
^{\frac14+i\infty}\frac{ \Gamma \left(-\frac{1}{3}-s\right) \Gamma \left(\frac{1}{3}-s\right) \Gamma \left(\frac{3}{2}-s\right) \Gamma (s+1)}{ \Gamma (1-s) \Gamma \left(s+\frac{1}{2}\right) \Gamma \left(\frac{7}{6}-s\right) \Gamma \left(\frac{11}{6}-s\right)}\D s
\end{align} in two ways, to verify the last equality in  \eqref{eq:IKM243_pFq_G}.  Thus, all the relations in   \eqref{eq:IKM243_pFq_G} are true.
\end{proof}

\subsection*{Acknowledgments}This research was supported in part  by the Applied Mathematics Program within the Department of Energy
(DOE) Office of Advanced Scientific Computing Research (ASCR) as part of the Collaboratory on
Mathematics for Mesoscopic Modeling of Materials (CM4).

A large proportion of this work has been assembled from my research notes on hypergeometric series, which were prepared at Princeton in 2012. I thank Prof.\ Weinan E (Princeton University and Peking University) for running a seminar on  mathematical problems in quantum fields  at Princeton, covering both 2-dimensional and $(4-\varepsilon)$-dimensional theories.

I am grateful to Dr.\ David Broadhurst for many fruitful communications on recent progress in the arithmetic properties of Feynman diagrams. In particular, I thank him for suggesting the challenging integral identity in \eqref{eq:IKM151_IKM153_diff}.


\end{document}